\newtheorem{Def}{Definition}[section]
\newtheorem{Thm}[Def]{Theorem}
\newtheorem{Prop}[Def]{Proposition}
\newtheorem{Rem}[Def]{Remark}
\newtheorem{Cor}[Def]{Corollary}
\newtheorem{Lem}[Def]{Lemma}
\newtheorem{Conj}[Def]{Conjecture}
\numberwithin{equation}{section}
\newcommand{\Q}{\mathbb{Q}}
\newcommand{\R}{\mathbb{R}}
\newcommand{\C}{\mathbb{C}}
\newcommand{\Z}{\mathbb{Z}}
\newcommand{\F}{\mathbb{F}}
\newcommand{\hh}{\mathbb{H}}
\newcommand{\e}{\mathbf{e}}
\newcommand{\mat}[4]{\begin{pmatrix} #1 & #2 \\ #3 & #4 \end{pmatrix}}
\newcommand{\smat}[4]{\left(\begin{smallmatrix} #1 & #2 \\ #3 & #4 \end{smallmatrix}\right)}
\begin{document}



\title{Structure theorem  for mod $p^m$ singular Siegel modular forms}
\author{Siegfried B\"ocherer and Toshiyuki Kikuta}
\maketitle

\noindent
{\bf 2020 Mathematics subject classification}: Primary 11F33 $\cdot$ Secondary 11F46\\
\noindent
{\bf Key words}: Siegel modular forms, Congruences for modular forms, Fourier coefficients, mod $p^m$ singular. 

\begin{abstract}
We prove that all  mod $p^m$ singular forms of level $N$,
degree $n+r$, and $p$-rank $r$ with $n\ge r$ 
are congruent mod $p^m$ to linear combinations of theta series of degree $r$ attached to quadratic forms of some level. 
Moreover, we prove that,
the levels of
theta series are of the form ``$p\mbox{-power}\times N$''. 
Additionally, in some cases of mod $p$ singular forms with smallest possible weight,
we prove that the levels of theta series should be $p$.
\end{abstract}

\section{Introduction}
Freitag \cite{Frei} showed the following properties concerning singular (Siegel modular) forms.  
\begin{itemize}  \setlength{\itemsep}{-5pt}
\item A Siegel modular form of weight $k$ and degree $n$ is singular if and only if $k<n/2$.
\item The weight $k$ of a singular form with singular rank $r$ should be $k=r/2$.
\item All singular modular forms of singular rank $r$ are
  linear combinations of theta series attached to quadratic forms
  of matrix size $r$. 
\end{itemize} 
We should also mention here that Resnikoff \cite{Res} and much later
Shimura \cite{Shim} considered singular modular forms on more general
tube domains. 

In \cite{Bo-Ki}, the authors defined a notion of mod $p^m$ singular form and proved that 
a congruence holds between the weight $k$ and
the mod $p$ singular rank $r$ (we call it $p$-rank).  
More precisely we proved that $2k-r\equiv 0$ mod $(p-1)p^{m-1}$.
This can be regarded as a mod $p^m$ analogue to Freitag's result
concerning weights.
We also mentioned in the same paper that we can construct some examples
from any true singular modular forms and also from
some Siegel-Eisenstein series of level $1$.
Another type of examples is provided by the work of Nagaoka \cite{Na}
and Katsurada-Nagaoka \cite{Kats-Na}, who studied $p$-adic limits
of Siegel-Eisenstein series and showed in some cases that they are equal
to certain genus theta series of singular rank. 
In particular, some of the Siegel-Eisenstein series studied by them are then congruent to
linear combinations of singular theta series, i.e. true singular forms.
Our conjecture is that, all mod $p^m$ singular forms are
congruent mod $p^m$ to true singular forms with some level, 
i.e., linear combinations of theta series with singular weights
($<\frac{n}{2}$) and some level.

In this paper, we mainly discuss this conjecture and prove it for many cases, in particular for ``strongly'' mod $p^m$ singular forms:
More precisely, we prove the following properties. 
\begin{itemize}  \setlength{\itemsep}{-5pt}
\item All mod $p^m$ singular forms of level $N$, degree $n+r$, and $p$-rank $r$ with $n\ge r$ are congruent mod $p^m$ to 
linear combinations of theta series of degree $r$ attached to quadratic forms of some level (Theorem \ref{Thm:general} (1)). 
\item
We show that the levels of the theta series are of the form
  $p^eN$ with some $e\in {\mathbb N}$
(Theorem \ref{Thm:general} (2)). 
\item In the case where the weights are the smallest possible in
  some sense, then the levels of theta series should be $p$;
  this is proved for the case of mod $p$ singular forms of $p$-rank $2$
  (Theorem \ref{Thm:r=2}).  
\end{itemize}
  
Of course, our results can be regarded as the congruence version of Freitag's results. 
However, there are many new difficulties in mod $p$ cases. 
For example,
if a degree $n$ theta series for a quadratic form $S$ is congruent mod $p$ to a level one modular form,
then this does not specify the level of $S$. This is one of the main
differences to the case over ${\mathbb C}$.
We can use the filtration (weight) of the theta series as a kind of substitute, but this has
rather weak properties.
As new tools we use a strong version of the $q$-expansion principle and
a degree $n$ version of Kitaoka's transformation formula
for theta series. 
These are shown within appendices
of this paper and may be of independent interest.

We remark that we chose to consider only modular forms for groups
of type $\Gamma_0(N)$ and quadratic nebentypus character. 
This allows us to consider congruences modulo rational primes (not modulo prime ideals in
suitable algebraic number fields);
also we wanted to avoid some delicate issues concerning
congruences for Siegel modular forms of nonquadratic nebentypus.
Instead, we show at the end of 
the main text of this paper how
some of our results can be extended to the setting of  mod $p^m$ singular
forms for arbitrary $m$.

This paper is organized as follows.
In Section \ref{Sec:2}, we fix notation and definitions, and review known facts.
In Section  \ref{Sec:3}, we explain our conjectures and main results of this paper. 
In Section \ref{Sec:4}, we provide one of our main tools: An expansion of the rank $r$-part
of the Fourier expansion of any modular form and
its formulation for mod $p^m$ singular forms of $p$-rank $r$.
Then 
in Section \ref{Sec:5} we focus on the mod $p$ case. We first give a
kind of Sturm bond for mod $p$ singular forms and
use it to show that every strongly mod $p$ singular modular form 
is represented by a linear combination of theta series for some quadratic forms. 
In Section \ref{Sec:6}, 
we try to specify the levels of their quadratic forms. 
We discuss also mod $p$ singular forms with $p$-rank $2$ whose weights are the smallest possible.   
In Section \ref{Sec:7}, we extend the results for the mod $p$ case (precisely,
results for general strongly mod $p$ singular forms) 
proved in Section \ref{Sec:5} 
and \ref{Sec:6} to the mod $p^m$ case by induction on $m$. 
Section \ref{Sec:8} and \ref{Sec:9} are appendices: 
We provide two tools that are needed for specification of the levels that 
we do in Section \ref{Sec:6}, namely  
a modified version of the $q$-expansion principle, 
and an extension to degree $n$ of Kitaoka's transformation formula for theta series.

\section{Preliminaries}
\label{Sec:2}
\subsection{Siegel modular forms}
\label{sec:siegel-modular-forms}
Let $\hh_{n}$ be the Siegel upper half space of degree $n$.
We put 
\[{\rm GSp}^+_n(\R):=\{g\in {\rm GL}_{2n}(\R)\;|\; {}^tgJ_ng=n(g)J_n\ \text{for\ some\ } n(g)>0\}, \]
where $J_n:=\smat{0_n}{1_n}{-1_n}{0_n}$. 
We define the action of  ${\rm GSp}^+_n(\R)$ on $\hh_{n}$ by
$gZ = (AZ + B)(CZ + D)^{-1}$ for $Z \in \hh_{n}$, $g \in {\rm GSp}^+_n(\R)$.
For a holomorphic function $F:\mathbb{H}_n\longrightarrow \mathbb{C}$ and a matrix $g=\left( \begin{smallmatrix} A & B \\ C & D \end{smallmatrix}\right)\in {\rm GSp}^+_n(\R)$, 
we define the slash operator in the usual way;
\[(F|_k\; g)(Z):=n(g)^{\frac{nk}{2}}\det (CZ+D)^{-k}F(gZ).\]

Let $N$ be a natural number and $\Gamma _n:={\rm Sp}_n(\mathbb{Z})$ the Siegel modular group (symplectic group with components in $\Z$). 
In this paper, we deal mainly with the congruence subgroup $\Gamma _0^{(n)}(N)$ with level $N$ of $\Gamma _n$ defined as 
\begin{align*}
&\Gamma _0^{(n)}(N):=\left\{ \begin{pmatrix}A & B \\ C & D \end{pmatrix}\in \Gamma _n \: \Big| \: C\equiv 0_n \bmod{N} \right\}.
\end{align*}
We will also use the groups
\begin{align*}
&\Gamma _1^{(n)}(N):=\left\{ \begin{pmatrix}A & B \\ C & D \end{pmatrix}
\in \Gamma _n \: \Big| \: C\equiv 0_n \bmod{N},\
\det A \equiv \det D \equiv 1 \bmod{N} \right\},\\
&\Gamma ^{(n)}(N):=\left\{ \begin{pmatrix}A & B \\ C & D \end{pmatrix}\in \Gamma _n \: \Big{|} \: B\equiv C \equiv 0_n \bmod{N},\ A\equiv D\equiv 1_n \bmod{N} \right\},
\end{align*}
where $\Gamma^{(n)}(N)$ is the so-called principal congruence subgroup of level $N$. We remark that 
\[\Gamma^{(n)}(N)\subset \Gamma _1^{(n)}(N)\subset \Gamma _0^{(n)}(N)\subset \Gamma _n.\] 

For a natural number $k$ and a Dirichlet character
$\chi : (\mathbb{Z}/N\mathbb{Z})^\times \rightarrow \mathbb{C}^\times $, the space
$M_k(\Gamma _0^{(n)}(N),\chi )$
of Siegel modular forms of weight $k$ with
character $\chi$ consists of all of holomorphic
functions $F:\mathbb{H}_n\rightarrow \mathbb{C}$ satisfying
\begin{equation*}
(F|_{k}\: g)(Z)=\chi (\det D)F(Z)\quad \text{for}\quad g=\begin{pmatrix}A & B \\ C & D \end{pmatrix}\in \Gamma_0^{(n)}(N).
\end{equation*}
If $n=1$, the usual condition in the cusps should be added.
When $\chi $ is a trivial character, we write simply $M_k(\Gamma _0^{(n)}(N))$ for $M_k(\Gamma_0^{(n)}(N) ,\chi )$.

Let $\Gamma \supset \Gamma^{(n)}(N)$. 
Similarly as above, we denote by $M_k(\Gamma )$ the space consists of 
all of holomorphic functions $F:\mathbb{H}_n\rightarrow \mathbb{C}$ satisfying
\begin{align*}
(F|_{k}\: g)(Z)=F(Z)\quad \text{for}\quad g=\begin{pmatrix}A & B \\ C & D \end{pmatrix}\in \Gamma.
\end{align*}
In this case also, we have to add the usual condition in the cusps when $n=1$.

For a prime $p$, let $\Gamma $ be $\Gamma _0^{(n)}(p^m)\cap \Gamma ^{(n)}(N) $ or $\Gamma _0^{(n)}(p^m)\cap \Gamma _1^{(n)}(N)$, and $\chi $ a Dirichlet character mod $p$.
In this paper, symbols $M_k(\Gamma ,\chi)$ and $M_k(\Gamma )$ also for such $\Gamma$ are sometimes used, but these are defined in the same way as above.

Note that, for any Dirichlet character $\chi $ mod $N$, we have  
\[M_k(\Gamma ^{(n)}(N))\supset M_k(\Gamma _1^{(n)}(N))\supset M_k(\Gamma _0^{(n)}(N),\chi )
. \]
When $F\in M_k(\Gamma)$ with $\Gamma \supset \Gamma ^{(n)}(N)$, $N$ is called the ``level'' of $F$. Sometimes $\Gamma $ itself is also called the level of $F$.

Any $F \in M_k(\Gamma ^{(n)}(N))$ has a Fourier expansion of the form
\[
F(Z)=\sum_{0\leq T\in \frac{1}{N}\Lambda_n}a_F(T){\boldsymbol e}({\rm tr}(TZ)),
\quad Z\in\mathbb{H}_n,
\]
where ${\boldsymbol e}(x):=e^{2\pi i x}$,  
\[
\Lambda_n
:=\{ T=(t_{ij})\in {\rm Sym}_n(\mathbb{Q})\;|\; t_{ii},\;2t_{ij}\in\mathbb{Z}\; \},
\]
and ${\rm Sym}_n(R)$ is the set of symmetric matrices of size $n$ with components in $R$. 
In particular, if $F \in M_k(\Gamma _1^{(n)}(N))$, the Fourier expansion of $F$ is given in the form 
\[F(Z)=\sum_{0\leq T\in \Lambda_n}a_F(T){\boldsymbol e}({\rm tr}(TZ)). \]

It is known that 
\[M_k(\Gamma _1^{(n)}(N))=\bigoplus _{\chi :(\Z/N\Z)^\times \to \C^\times }M_k(\Gamma_0^{(n)}(N) ,\chi ),\]
where $\chi $ runs over all the Dirichlet characters mod $N$. 
We remark that, if $F\in M_k(\Gamma _0^{(n)}(N),\chi )$, then we have
\[a_F(T[U])=(\det U)^k\chi (\det U)a_F(T)\]
for each $T\in \Lambda _n$ and $U\in {\rm GL}_n(\Z)$. Here we write as $T[U]:={}^t U T U$. 
In particular, if $\chi (-1)=(-1)^k$, we have $a_F(T[U])=a_F(T)$ for 
each $T\in \Lambda _n$ and $U\in {\rm GL}_n(\Z)$. 

Let $\Phi$ be the Siegel $\Phi$-operator defined by  
\[\Phi (F)(Z'):=\lim _{t\to \infty }F\mat{Z'}{0}{0}{it}, \]
where $F\in M_k(\Gamma_0^{(n)}(N) ,\chi )$, $Z'\in \hh _{n-1}$, and $t\in \R$. 
As is well-known, we have $\Phi (F)\in M_k(\Gamma_0^{(n-1)}(N) ,\chi )$ and the Fourier expansion of $\Phi (F)$ is described as
\[\Phi(F)(Z')=\sum _{0\le T\in \Lambda _{n-1}}a_F\mat{T}{0}{0}{0}{\boldsymbol e}({\rm tr}(TZ')). \]
Suppose that $F\in M_k(\Gamma _0^{(n)}(N),\chi )$ satisfies $\Phi (F)\neq 0$ (and then $F\neq 0$). 
Taking $g\in \Gamma _0^{(n)}(N)$ as $g=\smat{-1_n}{0_n}{0_n}{-1_n}$, 
we have 
$F|_k g=(-1)^{nk}F=\chi (-1)^{n}F$
because of the transformation law of $F$. 
Therefore we have $\chi (-1)^n=(-1)^{nk}$. 
On the other hand, by the same property of $\Phi (F)\neq 0$, 
we have $\chi (-1)^{n-1}=(-1)^{(n-1)k}$.  
These imply that $\chi (-1)=(-1)^k$. 
In this case (of $\Phi(F)\neq 0$), we have automatically $a_F(T[U])=a_F(T)$ for 
 each $T\in \Lambda _n$ and $U\in {\rm GL}_n(\Z)$. 

For a subring $R$ of $\mathbb{C}$, let $M_{k}(\Gamma,\chi )_{R}$ (resp. $M_{k}(\Gamma )_{R}$)
denote the $R$-module of all modular forms in $M_{k}(\Gamma, \chi )$ (resp. $M_{k}(\Gamma )$)
 whose Fourier coefficients are in $R$.

\subsection{Congruences for modular forms}
Let $p$ be a prime and $\Z_{(p)}$ the set of $p$-integral rational numbers. 
Let $F_i$ ($i=1$, $2$) be two formal power series of the form
\[F_i=\sum _{T\in \frac{1}{N}\Lambda _{n}}a_{F_i}(T){\boldsymbol e}({\rm tr}(TZ))\]
with $a_{F_i}(T)\in \Z_{(p)}$ for all $T\in \frac{1}{N}\Lambda _n$. 
We write $F_1 \equiv F_2$ mod $p^m$ if $a_{F_1}(T)\equiv a_{F_2}(T)$ mod $p^m$ for all $T \in \frac{1}{N}\Lambda _n$.  

Let $\widetilde{M}_{k}(\Gamma _0^{(n)}(N),\chi )_{p^m}$ be the set of  
$\widetilde{F}=\sum _{T}\widetilde{a_F(T)}{\boldsymbol e}({\rm tr}(TZ))$ with 
$F\in M_{k}(\Gamma _0^{(n)}(N),\chi )_{\Z_{(p)}}$, where $\widetilde{a_F(T)}:=a_F(T)$ mod $p^m$. 
If $m=1$, we write simply $\widetilde{M}_{k}(\Gamma _0^{(n)}(N),\chi )$ for $\widetilde{M}_{k}(\Gamma _0^{(n)}(N),\chi )_{p^m}$. 
Note that, $\widetilde{M}_{k}(\Gamma _0^{(n)}(N),\chi )$ is a vector space over $\F_p$.

We define the filtration weight as 
\begin{align*}
&\omega^{n}_{N,\chi,p^m} (F):=\min \{k \;|\; \widetilde{F} \in \widetilde{M}_k(\Gamma _0^{(n)}(N),\chi )_{p^m} \}. 
\end{align*}
We write also $\omega^{n}_{N,\chi} (F):=\omega^{n}_{N,\chi,p} (F)$.

\begin{Def}
Let $F\in M_k(\Gamma _0^{(n+r)}(N),\chi )_{\Z_{(p)}}$. 
We say that $F$ is ``mod $p^m$ singular'' if 
\begin{itemize}  \setlength{\itemsep}{-5pt}
\item we have $a_F(T)\equiv 0$ mod $p^m$ for any $T\in \Lambda _{n+r}$ with ${\rm rank}(T)>r$, 
\item there exists $T\in \Lambda _{n+r}$ with ${\rm rank}(T)=r$ satisfying $a_F(T)\not \equiv 0$ mod $p$.
\end{itemize} 
We call such $r$ ``$p$-rank'' of $F$.  
Additionally if $n\ge r$, we say that such $F$ is ``strongly mod $p^m$ singular''.
\end{Def}
\begin{Rem} 
\begin{enumerate}  \setlength{\itemsep}{-5pt}
\item
 Such a condition ``strongly
 singular" also plays a crucial role in Freitag's book \cite{Frei}
for the theory over ${\mathbb C}$ for arbitrary congruence subgroups.
\item
If $F$ is nontrivial mod $p^m$ singular, then $F$ satisfies
$\Phi (F)\neq 0$. 
This implies that $\chi (-1)=(-1)^k$ and $a_F(T[U])=a_F(T)$  
for each $T\in \Lambda _n$ and $U\in {\rm GL}_n(\Z)$ (see Page 4). 
In other words, when dealing with nontrivial mod $p^m$ singular modular forms, 
we may assume that $\chi (-1)=(-1)^k$.
\end{enumerate}
\end{Rem}
\begin{Thm}[B\"ocherer-Kikuta \cite{Bo-Ki}]
\label{Thm:Bo-Ki}
Let $n$, $r$, $k$, $N$ be positive integers and $p$ a prime with $p\ge 5$.  
Let $\chi $ be a quadratic Dirichlet character mod $N$ with $\chi (-1)=(-1)^k$. 
Suppose that $F\in M_k(\Gamma ^{(n+r)}_0(N),\chi )_{\Z_{(p)}}$ is mod $p^m$ singular of $p$-rank $r$. 
Then we have $2k-r\equiv 0$ mod $(p-1)p^{m-1}$. 
In particular, $r$ should be even.  
\end{Thm}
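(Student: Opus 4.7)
The plan is to reduce to degree $r$ via the iterated Siegel $\Phi$-operator, handle the base case $m=1$ by comparison with a classical theta series of weight $r/2$, and then lift the congruence to general $m$ by induction using Eisenstein-series weight shifts.

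First I would exploit that, since $F$ is nontrivial mod $p^m$ singular, one has $\Phi(F)\neq 0$, so the ${\rm GL}_{n+r}(\Z)$-equivariance
\[
a_F({}^tUTU)=\chi(\det U)(\det U)^k\,a_F(T),\qquad U\in{\rm GL}_{n+r}(\Z),
\]
is at hand. Picking a rank-$r$ matrix $T_0\in\Lambda_{n+r}$ with $a_F(T_0)\not\equiv 0\pmod p$ and choosing $U$ so that ${}^tUT_0U=\smat{0_n}{0}{0}{S_0}$ for some $S_0\in\Lambda_r$ positive definite, one obtains $a_F(\smat{0_n}{0}{0}{S_0})\not\equiv 0\pmod p$. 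Consequently $G:=\Phi^n F\in M_k(\Gamma_0^{(r)}(N),\chi)_{\Z_{(p)}}$ satisfies $a_G(S_0)\not\equiv 0\pmod p$, in particular $G\not\equiv 0\pmod p$.

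For the base case $m=1$ I would compare $G$ with the theta series $\vartheta:=\vartheta_{S_0}^{(r)}\in M_{r/2}(\Gamma_0^{(r)}(N_0),\chi_{S_0})$ of weight $r/2$ attached to $S_0$; for $\vartheta$ to carry an integer weight, $r$ must be even, which matches the theorem. Both $G$ and $\vartheta$ are nonzero mod $p$, and a Serre-type weight congruence for mod $p$ Siegel modular forms of level $N$—established via a level-$N$ Eisenstein series in $M_{p-1}(\Gamma_0^{(r)}(N))$ congruent to $1$ mod $p$—identifies the mod $p$ filtration weight of $G$ as $r/2+\ell(p-1)$ for some $\ell\ge 0$. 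This yields $k\equiv r/2\pmod{(p-1)/2}$, equivalently $2k-r\equiv 0\pmod{p-1}$.

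For the inductive step $m-1\Rightarrow m$, assume the result for $m-1$: applied to $F$ modulo $p^{m-1}$ it gives $2k-r=\alpha(p-1)p^{m-2}$ for some $\alpha\in\Z$, and we must show $p\mid\alpha$. The key ingredient is an Eisenstein series $V_m\in M_{(p-1)p^{m-1}}(\Gamma_0^{(n+r)}(N))$ with $V_m\equiv 1\pmod{p^m}$, built inductively from $V_{m-1}$ via the identity $(1+p^{m-1}H)^p\equiv 1\pmod{p^m}$; this shows that mod $p^m$ weights are defined modulo $(p-1)p^{m-1}$. Writing $F\equiv F_{\mathrm{approx}}\pmod{p^{m-1}}$ for a weight-matched, theta-based witness $F_{\mathrm{approx}}$ produced by the inductive hypothesis, the error $F':=p^{-(m-1)}(F-F_{\mathrm{approx}})$ is a mod $p$ modular form that still displays rank-$r$ singular behavior, and applying the base case to $F'$ forces $p\mid\alpha$. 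The main obstacle is making this step rigorous: one needs a concrete mod $p^{m-1}$ approximation $F_{\mathrm{approx}}$ of the right weight (itself a partial structure result), together with careful tracking of how the residual $F'$ inherits rank-$r$ singularity—this is where the bulk of the technical work in [Bo-Ki] is expected to concentrate.
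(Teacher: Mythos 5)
This theorem is not proved in the present paper: it is quoted from the authors' earlier work \cite{Bo-Ki} and used as an input (e.g.\ in Sections \ref{Sec:5} and \ref{Sec:7}), so there is no internal proof to measure your attempt against. Judged on its own merits, your proposal does capture the strategy one expects to work --- tie the rank-$r$ part of $F$ to theta series of weight $r/2$ and then invoke a weight-congruence theorem --- but both halves of your argument have genuine gaps.

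In the base case the decisive step is unjustified: from ``$G=\Phi^nF$ and $\vartheta_{S_0}^{(r)}$ are both nonzero mod $p$'' no Serre-type weight congruence follows. Such results (Proposition \ref{weightcongruence} here, or B\"ocherer--Nagaoka \cite{Bo-Na:3}) require the two forms to be \emph{congruent} mod $p$, and you never establish any congruence between $G$ and a theta series; $\Phi^nF$ is merely some nonzero weight-$k$ form of degree $r$, and there is no reason for it to be congruent to $c\,\theta_{S_0}^{(r)}$. The missing ingredient is precisely the Freitag-type rearrangement of Section \ref{Sec:4}: restricting $F$ to $\smat{Z_1}{0}{0}{Z_2}$ and using mod $p$ singularity, the coefficient forms $\phi_T\in M_k(\Gamma_0^{(n)}(N),\chi)$ become congruent mod $p$ to linear combinations of the weight-$\frac{r}{2}$ theta series $\theta_S^{(n)}$; only then does a weight congruence apply and yield $2k-r\equiv 0\bmod (p-1)$. (Note also that you presuppose $r$ even in order to write down $\vartheta_{S_0}^{(r)}$ as an integral-weight form, whereas the parity of $r$ is part of the conclusion.) In the inductive step, the ``weight-matched theta-based witness $F_{\mathrm{approx}}$'' is essentially the structure theorem (Theorems \ref{Thm:general} and \ref{Thm:weak}), which in this paper is \emph{deduced from} the present theorem, so invoking it here is circular unless you prove it independently; and even granting it, applying the base case to $F'=p^{-(m-1)}(F-F_{\mathrm{approx}})$ only yields information modulo $p-1$, not the divisibility $p\mid\alpha$ that you need. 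The extra factor $p^{m-1}$ requires a genuine mod $p^m$ weight-congruence theorem (two forms nonzero mod $p$ that are congruent mod $p^m$ have weights agreeing mod $(p-1)p^{m-1}$, suitably adapted to quadratic nebentypus); your Eisenstein series $V_m\equiv 1\bmod p^m$ of weight $(p-1)p^{m-1}$ only provides the easy converse direction, namely that weights are well defined up to that modulus, not that congruent forms must have congruent weights.
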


It is a classical result by Serre \cite{Se} that a congruence mod $p$ for two
elliptic modular forms $f$ and $g$ for level one implies a congruence of their
weights mod $p-1$. 
We need a version for degree $n$ including levels and quadratic nebentypus:
\begin{Prop}
\label{weightcongruence}
Let $p$ be a prime with $p\ge 5$ and $N$ a positive integer with $p\nmid N$. 
Let  $\chi$ and $\chi'$ be two quadratic Dirichlet characters mod $p$ and
$F\in M_{k}(\Gamma_0^{(n)}(p^m)\cap \Gamma_1^{(n)}(N),\chi)$, $F'\in M_{k'}(
\Gamma_0^{(n)}(p^m) \cap \Gamma_1^{(n)}(N),\chi')$ be two modular forms satisfying $F\equiv F'\bmod p$.
Then $k-k'=t\cdot \frac{p-1}{2}$ holds for some $t\in {\mathbb Z}$ and we have
\[\chi=\chi'\iff t \quad \mbox{even}. \]
\end{Prop}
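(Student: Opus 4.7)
The plan is to reduce to the same-character case of Serre's classical filtration argument by means of an auxiliary Siegel modular form $H\equiv 1\pmod p$ of weight $(p-1)/2$, level $p$, and nebentypus $\chi_p$ (the Legendre symbol mod $p$); multiplication by $H$ then interchanges the two quadratic characters mod $p$ without changing the mod-$p$ $q$-expansion, while shifting the weight by $(p-1)/2$.

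To construct $H$, I would realize the $A_{p-1}$ root lattice as $L=\{x\in\Z^p:x_1+\cdots+x_p=0\}$, a positive definite lattice of rank $p-1$ and discriminant $p$, and set
\[
H(Z)\;:=\;\theta_L^{(n)}(Z)\;=\;\sum_{X\in L^n}\mathbf{e}\!\bigl(\tfrac12\,\mathrm{tr}(Z\,X^T X)\bigr).
\]
By Kitaoka's transformation formula (cf.\ the appendix), $H\in M_{(p-1)/2}(\Gamma_0^{(n)}(p),\chi_p)_{\Z}$. The crucial mod-$p$ congruence comes from the action of the $p$-cycle $\sigma=(1\,2\,\cdots\,p)\in S_p$ on $L^n$ by row-permutations of $X\in M_{p,n}(\Z)$: $\sigma$ preserves $X^T X$ and fixes only $X=0$ (a $\sigma$-fixed matrix has constant rows, which, combined with the zero column-sum condition, forces $X=0$). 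Hence every Fourier coefficient of $H$ at $T\ne 0$ is a sum over free $\langle\sigma\rangle$-orbits of size $p$, so is $\equiv 0\pmod p$; the constant term is $1$.

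To conclude, the group of quadratic Dirichlet characters mod $p$ has order two, so either $\chi=\chi'$ or $\chi'\chi_p=\chi$; in the second case replace $F'$ by $F'H\in M_{k'+(p-1)/2}(\Gamma_0^{(n)}(p^m)\cap\Gamma_1^{(n)}(N),\chi)$, still satisfying $F\equiv F'H\pmod p$. This reduces us to two mod-$p$ congruent nonzero forms of the same character $\chi$ and weights $k$ and $k''\in\{k',\,k'+(p-1)/2\}$. Invoking the Serre-type filtration theorem for Siegel forms in this setting -- which rests on the Nagaoka-style congruence $E_{p-1}^{(n)}\equiv 1\pmod p$ and the standard filtration-uniqueness argument -- we obtain $k\equiv k''\pmod{p-1}$, hence $k-k'\equiv 0$ or $(p-1)/2\pmod{p-1}$, exactly according to whether $\chi=\chi'$ or $\chi\ne\chi'$, as required.

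The main obstacle I expect is verifying the Serre-type filtration uniqueness in the Siegel setting with level $p^m N$ and quadratic nebentypus mod $p$. The easy half is immediate from $E_{p-1}^{(n)}\equiv 1\pmod p$; the converse -- that no other weights mod $p-1$ are attainable by a given mod-$p$ form at fixed character -- requires adapting Serre's original theta-operator argument, together with the $q$-expansion principle that the authors develop in their appendices.
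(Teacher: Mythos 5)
Your construction of $H=\theta^{(n)}_{A_{p-1}}$ is correct and is essentially a concrete realization of the form ${\mathcal E}\in M_{\frac{p-1}{2}}(\Gamma_0^{(n)}(p),(\frac{*}{p}))$ with ${\mathcal E}\equiv 1 \bmod p$ that the paper itself borrows from B\"ocherer--Nagaoka: the $p$-cycle argument does show that all nonconstant Fourier coefficients are divisible by $p$, and the weight, level and nebentypus are as you claim. Multiplying $F'$ by $H$ therefore correctly reduces the proposition to the equal-character case at the cost of a weight shift by $\frac{p-1}{2}$, which is the same character-versus-weight bookkeeping that drives the paper's proof.

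The gap is in the final step. Having arranged equal nebentypus, you need the Serre-type weight congruence ($F\equiv F'\bmod p$ with the same character implies $k\equiv k'\bmod{p-1}$) for forms on $\Gamma_0^{(n)}(p^m)\cap\Gamma_1^{(n)}(N)$, i.e.\ at a level divisible by $p$. You invoke such a theorem and yourself flag its verification as ``the main obstacle,'' but that statement is not available in the literature at $p$-divisible level --- the B\"ocherer--Nagaoka weight-congruence theorem that the paper relies on is proved only for $\Gamma_1^{(n)}(N)$ with $p\nmid N$, as the paper notes explicitly --- and proving it at level $p^mN$ is essentially the content of the proposition itself, so as written the crucial congruence is assumed rather than established. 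The paper closes exactly this gap by a level-change step: it first descends from $\Gamma_0^{(n)}(p^m)\cap\Gamma_1^{(n)}(N)$ to $\Gamma_1^{(n)}(N)$, producing $G\equiv F$ and $G'\equiv F'$ mod $p$ whose weights $l,l'$ differ from $k,k'$ mod $p-1$ by $0$ or $\frac{p-1}{2}$ according to whether the $p$-component of the nebentypus is trivial or not, and only then applies the prime-to-$p$-level theorem to $G,G'$. To repair your argument you would need to carry out this descent (for instance by the trace construction of Appendix A, multiplying by a high power of your $H$ or of ${\mathcal E}$ to annihilate the extra cosets mod $p$) before quoting the weight-congruence theorem; alternatively you would have to actually prove the filtration statement at level $p^mN$, which is a substantially harder task than the proposition itself.
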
  
\begin{proof} 
We want to apply the results from B\"ocherer-Nagaoka \cite{Bo-Na:3} to get the desired congruences for the weights.
Note that in \cite{Bo-Na:3} only the case of level $\Gamma_1^{(n)}(N)$ with $N$ coprime to $p$ is covered.
 We may apply level change to $F$ and $F'$ to arrive at $G$ and $G'$ of
  level $\Gamma_1^{(n)}(N)$ with $F\equiv G$ mod $p$ and $G'\equiv F'$ mod $p$ with weights $l$ and $l'$.

 If $\chi=\chi'$, we have $k\equiv l$ mod $p-1$ and $k'\equiv l'$ mod $p-1$
  and then, by \cite{Bo-Na:3} $k\equiv k'$ mod $p-1$, i.e. $t$ is even.
  
  If $\chi\not=\chi'$, let us assume that $\chi$ is nontrivial.
  Then $l\equiv k+\frac{p-1}{2}$ mod $p-1$ and the congruence
  $l\equiv l'$ mod $p-1$ implies that $t$ has to be odd. This completes the proof. 
\end{proof}

To formulate our results efficiently, we introduce the following
(somewhat nonstandard). 
\begin{Def}
\label{Eq_Char} 
Suppose that $k-k'=t\cdot \frac{p-1}{2}$ holds for some $t\in {\mathbb Z}$. 
For a prime $p$ and a natural number $N$ coprime to $p$ let $\chi$ and $\chi'$ be two quadratic Dirichlet characters mod
$pN$. 
We write
$$\chi='\chi'$$
if $\chi_N =\chi'_N$, and $\chi_p$ and $\chi'_p$ are related as in
Proposition \ref{weightcongruence}; i.e. 
\[\chi_p=\chi'_p\iff t \quad \mbox{even}. \]
Here $\chi_N$ and $\chi_p$ are the $N$-component and
$p$-component of $\chi$ (and the same for $\chi'$).
In other words, we have
\[\chi =' \chi'\iff \chi =\chi' \left(\frac{*}{p}\right)^t, \]
where $(\frac{*}{p})$ is the unique nontrivial quadratic character mod $p$. 

Note that this notation depends on $k$, $k'$ but it will always be clear form the 
context, which weights are involved.
\end{Def}

\subsection{Theta series for quadratic forms}
Let $m$ be a positive integer. 
For $S$, $T\in \Lambda _m$, we write $S\sim T$ mod ${\rm GL}_m(\Z)$ if 
there exists $U\in {\rm GL}_m(\Z)$ such that $S[U]=T$.
Here we put $S[U]:={}^tUSU$. 
We say that $S$ and $T$ are ``${\rm GL}_m(\Z)$-equivalent'' if $S\sim T$ mod ${\rm GL}_m(\Z)$. 
We denote by $\Lambda _m^+$ the set of all positive definite elements of $\Lambda _m$. 
We put $L:=\Lambda _m$ or $\Lambda ^+_m$. 
We write $L/{\rm GL}_m(\Z)$ for $L/\sim $  
the set of representatives of ${\rm GL}_m(\Z)$-inequivalence classes in $L$.   

Let $m$ be even. 
For $S\in \Lambda _m^+$, we define the theta series of degree $n$ in the usual way:
\[\theta _S^{(n)}(Z):=\sum _{X\in \Z^{m,n}}{\boldsymbol e}({\rm tr}(S[X]Z))\quad (Z\in \hh_{n}), \] 
where $\Z^{m,n}$ is the set of  $m\times n$ matrices with integral components and $S[X]:={}^tXSX$ and we write 
${\boldsymbol e}(x):=e^{2\pi i x}$. 
We define the level of $S$ as 
\[{\rm level}(S):=\min\{N\in \Z_{\ge 1} \;|\; N(2S)^{-1}\in 2\Lambda _m\}. \]
Then $\theta _S^{(n)}$ defines an element of $M_{\frac{m}{2}}(\Gamma _0^{(n)}(N),\chi _S)$, 
where $N={\rm level}(S)$, $\chi _S$ is a Dirichlet character mod $N$ defined by 
\[\chi _S(d)={\rm sign} (d)^\frac{m}{2} \left( \frac{(-1)^\frac{m}{2}\det 2S}{|d|} \right).\] 
We denote by ${\rm cont}(S)$ the content of $S$ defined as
\[{\rm cont}(S):=\max\{C\in \Z_{\ge 1}\;|\; C^{-1}S\in \Lambda _n \}. \] 

For fixed $S\in \Lambda^+ _{m}$ and $T\in \Lambda _n$, we put 
\[A(S,T):=\sharp\{X \in \Z^{m,n} \;|\; S[X]=T \}.\]
Using this notation, we can write the Fourier expansion of the theta series in the
form 
\[\theta _S^{(n)}(Z)=\sum _{T\in \Lambda _n}A(S,T){\boldsymbol e}({\rm tr}(TZ)). \]

In the above definitions, 
$S$ and $T$ are restricted to symmetric half integral matrices, 
but the same symbols (such as $\theta _S^{(n)}(Z)$ and $A(S,T)$) are used for symmetric matrices with rational components.
Then $A(S,S)$ is the order of automorphism group of $S$;
\[A(S,S)=\sharp \{U\in {\rm GL}_m(\Z)\;|\; S[U]=S\}. \]
By looking at minimal polynomials, one sees that ${\rm GL}_m({\mathbb Z})$ cannot
contain elements of order $p$ if $p>m+1$. 
From this we obtain the very useful statement that
\begin{align}
\label{Mink}
A(S,S)\not \equiv 0 \bmod{p} \quad \text{if}\quad p> m+1
\end{align}
for any rational positive definite symmetric matrix $S$ of size $m$.

For later use, we introduce some results on the representation
numbers for binary quadratic forms: 
\begin{Thm}[Dirichlet, Weber (see Kani \cite{Kani}, Lemma 8, page 4)]
\label{Thm:Diri}
Let $T$, $T_i\in \Lambda _2$ ($i=1,2$) be primitive forms, i.e. ${\rm cont}(T)={\rm cont}(T_i)=1$. 
Assume that $T_1$ and $T_2$ have a same discriminant $D<0$. Then we have the following statements. 
\begin{enumerate}  \setlength{\itemsep}{-5pt}
\item
There are infinitely many primes $l$ such that $A(T,l)>0$.
\item
If there exists a prime $l$ with $l\nmid D$ such that $A(T_1,l)>0$ and $A(T_2,l)>0$, then $T_1\sim T_2$ mod ${\rm GL}_2(\Z)$.
\end{enumerate}
\end{Thm}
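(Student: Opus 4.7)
The plan is to derive both parts from the classical dictionary between $\mathrm{GL}_2(\Z)$-equivalence classes of primitive binary quadratic forms of discriminant $D<0$ and the proper ideal classes of the quadratic order $\mathcal{O}_D$ of discriminant $D$ inside $\Q(\sqrt{D})$. Under this correspondence, a primitive form $T$ of discriminant $D$ determines an ideal class $\mathcal{C}_T$, and for a rational prime $l\nmid D$ one has $A(T,l)>0$ precisely when $l$ splits in $\mathcal{O}_D$ as $l\mathcal{O}_D=\mathfrak{l}\bar{\mathfrak{l}}$ with $[\mathfrak{l}]\in\mathcal{C}_T\cup\mathcal{C}_T^{-1}$. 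The appearance of the inverse reflects the passage from $\mathrm{SL}_2(\Z)$- to $\mathrm{GL}_2(\Z)$-equivalence: conjugating by $\mathrm{diag}(1,-1)$ sends a form to its opposite, which corresponds to the inverse ideal class, so $\mathrm{GL}_2(\Z)$-classes of forms match unordered pairs $\{\mathcal{C},\mathcal{C}^{-1}\}$.

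For (1), I would appeal to the equidistribution of prime ideals over the ring class group $\mathrm{Cl}(\mathcal{O}_D)$. The Dirichlet $L$-series $L(s,\psi)$ attached to any nontrivial character $\psi$ of $\mathrm{Cl}(\mathcal{O}_D)$ does not vanish at $s=1$, so a standard orthogonality argument (summing $\sum_{\mathfrak{p}}\psi([\mathfrak{p}])N\mathfrak{p}^{-s}$ against the trivial character) shows that the degree-one prime ideals whose class lies in a prescribed $\mathcal{C}_T$ have positive Dirichlet density $1/|\mathrm{Cl}(\mathcal{O}_D)|$. In particular there are infinitely many such prime ideals, and each produces a rational prime $l\nmid D$ with $A(T,l)>0$.

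For (2), fix a prime $l\nmid D$ with $A(T_1,l),A(T_2,l)>0$. Since $l$ is unramified in $\mathcal{O}_D$ and is represented by some form of discriminant $D$, it must split as $l\mathcal{O}_D=\mathfrak{l}\bar{\mathfrak{l}}$, and these two prime ideals are the only primes above $l$ and lie in inverse classes. The dictionary then forces $\mathcal{C}_{T_i}\cup\mathcal{C}_{T_i}^{-1}=\{[\mathfrak{l}],[\mathfrak{l}]^{-1}\}$ for both $i=1,2$, so $\mathcal{C}_{T_1}$ and $\mathcal{C}_{T_2}$ agree up to inversion, which is exactly the assertion $T_1\sim T_2$ mod $\mathrm{GL}_2(\Z)$. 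The main technical obstacle is the analytic input needed for (1), namely the nonvanishing of $L(1,\psi)$ for ring-class characters; once that is in hand, both statements are formal consequences of the form-ideal correspondence, as developed in Cox's \emph{Primes of the form $x^2+ny^2$} or in Kani \cite{Kani}.
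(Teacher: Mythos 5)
Your argument is correct: the paper itself gives no proof of this statement, importing it as a classical result of Dirichlet and Weber via Kani's Lemma 8, and your proof is precisely the standard argument underlying those sources --- the dictionary between $\mathrm{GL}_2(\Z)$-classes of primitive forms of discriminant $D$ and ideal classes of $\mathcal{O}_D$ up to inversion, the nonvanishing of $L(1,\psi)$ for ring class characters for part (1), and the fact that a split prime $l\nmid D$ has exactly two primes above it, lying in inverse classes, for part (2). Nothing further is needed.
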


\begin{Thm}[B\"ocherer-Nagaoka \cite{Bo-Na:2}]
\label{Thm:Bo-Na}
Let $S\in \Lambda _{2}^+$ be of level $p$. Let $n$ be a positive integer. 
Then there exists $F\in M_{\frac{p+1}{2}}(\Gamma _n)_{\Z_{(p)}}$ such that $F\equiv \theta ^{(n)}_S$ mod $p$. 
\end{Thm}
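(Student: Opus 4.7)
The plan is to construct $F$ by multiplying $\theta_S^{(n)}$ by an auxiliary modular form of weight $(p-1)/2$ that is congruent to $1$ modulo $p$ and absorbs the nebentypus, and then to descend from level $p$ to level $1$ via a trace. Since $S$ has level $p$ and size $m=2$, the theta series $\theta_S^{(n)}$ lives in $M_1(\Gamma_0^{(n)}(p),\chi_S)$ where $\chi_S$ is essentially the Legendre symbol $\left(\frac{\cdot}{p}\right)$, so multiplying by a form with the same character will produce a product with trivial character and weight exactly $(p+1)/2$.

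First I would construct an auxiliary form $G\in M_{(p-1)/2}(\Gamma_0^{(n)}(p),\chi_S)_{\Z_{(p)}}$ with $G\equiv 1 \pmod{p}$. For $n=1$ this is a classical construction: a suitable normalization of the Eisenstein series $E_{(p-1)/2,\chi_S}$ has constant term equal to a $p$-adic unit (essentially a Kubota--Leopoldt $L$-value at a regular point) and all higher Fourier coefficients divisible by $p$ (Serre's congruences for Eisenstein series with character). For general $n$, one takes $G$ to be the Siegel--Eisenstein series of weight $(p-1)/2$ and character $\chi_S$, defined by meromorphic continuation \`a la Shimura, and verifies the analogous Fourier-coefficient congruences via the explicit formula for its Fourier expansion in terms of $L$-values and local Siegel densities.

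Next, form
\[F_0 := G\cdot \theta_S^{(n)} \in M_{(p+1)/2}(\Gamma_0^{(n)}(p))_{\Z_{(p)}},\]
whose character is $\chi_S^2=1$ and which satisfies $F_0\equiv \theta_S^{(n)} \pmod{p}$. To descend from level $p$ to level $1$, apply the trace operator
\[F := \sum_{\gamma \in \Gamma_0^{(n)}(p)\backslash \Gamma_n} F_0 \bigl|_{(p+1)/2}\gamma \;\in\; M_{(p+1)/2}(\Gamma_n)_{\Z_{(p)}}.\]
One then checks that the identity coset contributes $F_0$ and every non-identity coset contributes a term whose Fourier expansion is $p$-divisible, so that $F\equiv F_0\equiv \theta_S^{(n)} \pmod{p}$.

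The main obstacle is precisely the mod $p$ vanishing of the non-identity coset contributions. This is an Atkin--Lehner-type analysis: decomposing each coset representative through the Fricke involution $W_p$ (or, in the Siegel setting, its analogue at the single prime $p$), the action of $W_p$ maps $G$ to a form whose Fourier coefficients are divisible by $p$ (a reflection of Serre's Eisenstein congruences, dualized), while $\theta_S^{(n)}|W_p$ is essentially the theta series of the lattice dual to $S$ and remains $p$-integral; the product is therefore $p$-divisible, and averaging over the cosets preserves this. An alternative strategy that avoids the trace is to construct $F$ directly as a level-one Siegel--Eisenstein series of weight $(p+1)/2$ and match it to $\theta_S^{(n)}$ modulo $p$ via a $p$-adic form of the Siegel--Weil formula, but this requires more machinery and careful handling of the genus-versus-class ambiguity, whereas the trace argument targets the individual theta $\theta_S^{(n)}$ directly.
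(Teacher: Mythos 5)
This statement is quoted in the paper as an external result of B\"ocherer--Nagaoka \cite{Bo-Na:2} and is given no proof there, so there is nothing internal to compare against; what I can say is that your architecture --- multiply $\theta_S^{(n)}$ by an auxiliary form $G$ of weight $\frac{p-1}{2}$ and nebentypus $\left(\frac{*}{p}\right)$ with $G\equiv 1\bmod p$, so that the product has trivial character and weight exactly $\frac{p+1}{2}$, and then trace from $\Gamma_0^{(n)}(p)$ down to $\Gamma_n$ --- is exactly the mechanism of the cited reference, and the same trick is reused in Appendix A of this paper. So the plan is the right one.

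The genuine gap is your construction of $G$. For $n=1$ the Eisenstein series $E_{(p-1)/2,\chi}$ does the job, but for general degree $n$ the weight $\frac{p-1}{2}$ lies far below the convergence range $k>n+1$ of the Siegel--Eisenstein series; you would have to invoke Shimura's analytic continuation, establish holomorphy at that point, and then prove $p$-integrality and the congruence $\equiv 1$ for \emph{all} Fourier coefficients, which are ratios of $L$-values and local densities evaluated at a non-classical weight. None of that is a routine extension of Serre's degree-one congruences, and it is not how the existence of $G$ is actually established: the form that exists (the ${\mathcal E}\in M_{\frac{p-1}{2}}(\Gamma_0^{(n)}(p),(\frac{*}{p}))_{\Z_{(p)}}$ of \cite{Bo-Na:2007}, used throughout this paper) is built as a theta series of a rank-$(p-1)$, level-$p$ lattice, for which ${\mathcal E}\equiv 1\bmod p$ is an elementary lattice-point count. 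Two further points need repair in the descent step. First, for $n>1$ the cosets of $\Gamma_0^{(n)}(p)$ in $\Gamma_n$ are not governed by the identity and the Fricke involution alone: there are $n+1$ double cosets $\omega_0,\dots,\omega_n$, indexed by the rank mod $p$ of the lower-left block, so your Atkin--Lehner reduction must be replaced by the stronger property ${\mathcal E}|\omega_j\equiv 0\bmod p$ for all $j\geq 1$, which is part of the \cite{Bo-Na:2007} construction. Second, since the target weight $\frac{p+1}{2}$ leaves room for only a single factor of ${\mathcal E}$, the non-identity cosets contribute only one power of $p$, and you must therefore actually verify that $\theta_S^{(n)}|\gamma$ is $p$-integral for every $\gamma\in\Gamma_n$; this follows from the theta transformation formula because $\det(2S)=p$ makes the relevant Gauss-sum constants $p$-units, but it is precisely where the hypothesis $\mathrm{level}(S)=p$ enters and cannot be waved through.
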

For a quadratic form $S\in \Lambda _m^+$, we put $\omega ^{n}_{N,\chi ,p^m}(S):=\omega ^{n}_{N,\chi,p^m}(\theta ^{(n)}_S)$,
$\omega ^{n}_{N,\chi }(S):=\omega ^{n}_{N,\chi,p}(\theta ^{(n)}_S)$.

\section{Conjectures and Results}
\label{Sec:3}
We begin by stating our conjecture in the most general situation which does not specify the weight and level.  
\begin{Conj}
\label{Conj0}
Any mod $p^m$ singular form is congruent mod $p^m$ to some true singular form. 
\end{Conj}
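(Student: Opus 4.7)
The plan is to argue by induction on $m$, treating the base case $m=1$ first and then bootstrapping. Within the base case I would separate the strongly singular regime $n\ge r$ from the residual case $n<r$.

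For the base case with $n\ge r$, let $F\in M_k(\Gamma_0^{(n+r)}(N),\chi)_{\Z_{(p)}}$ be mod $p$ singular of $p$-rank $r$. Modulo $p$, only coefficients $a_F(T)$ with $\rank(T)\le r$ are nonzero. Every $T\in \Lambda_{n+r}$ of rank exactly $r$ can be written as $T=S[X]$ with $S\in \Lambda_r^+$ unique up to ${\rm GL}_r(\Z)$-equivalence and $X\in \Z^{r,n+r}$ primitive, so the rank-$r$ stratum of the Fourier expansion is naturally indexed by pairs $(S,X)$. The strategy is to set
\[
c_S\ :=\ \frac{a_F(S[X_0])}{A(S,S)}\bmod p
\]
for a chosen primitive $X_0$, using \eqref{Mink} to ensure $A(S,S)$ is invertible mod $p$ under the mild assumption $p>r+1$, and to form the candidate $\sum_S c_S\,\theta_S^{(n+r)}$ summed over classes $S\in \Lambda_r^+/{\rm GL}_r(\Z)$. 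The transformation law $a_F(T[U])=(\det U)^k\chi(\det U)a_F(T)$, combined with the fact (in the remark following Theorem \ref{Thm:Bo-Ki}) that $\chi(-1)=(-1)^k$ for any nontrivial mod $p^m$ singular form, makes $a_F$ fully ${\rm GL}_{n+r}(\Z)$-invariant; hence $c_S$ is well-defined and $\sum_S c_S\theta_S^{(n+r)}$ matches $F$ at every rank-$r$ Fourier coefficient mod $p$. The difference is mod $p$ singular of strictly smaller $p$-rank, and one iterates downward on $r$ until reaching $r=0$, where the form vanishes mod $p$.

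For the inductive step from $m-1$ to $m$, suppose the conjecture is known mod $p^{m-1}$. Given $F$ mod $p^m$ singular of $p$-rank $r$, the inductive hypothesis yields a theta combination $G$ with $F\equiv G\pmod{p^{m-1}}$, and one considers the residual form $H:=p^{-(m-1)}(F-G)$, which after a suitable lifting is mod $p$ singular of $p$-rank $\le r$. The base case then supplies a further theta correction $\sum c'_S\theta_S^{(n+r)}$ congruent to $H$ mod $p$, and adding $p^{m-1}$ times this correction to $G$ produces the desired combination. Compatibility of weights and nebentypus across steps is tracked using Proposition \ref{weightcongruence} and Definition \ref{Eq_Char}; one must also allow the level to absorb a $p$-power at each iteration.

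The main obstacle is removing the strongly singular hypothesis $n\ge r$. When $n<r$, there is insufficient room in $\Lambda_{n+r}$ to recover the ${\rm GL}_r(\Z)$-class of $S$ from the rank-$r$ Fourier data of $F$, because distinct classes can yield identical coefficient data on such a small space; Freitag's classical theory over $\C$ imposes precisely the same restriction. Circumventing it mod $p^m$ appears to require new ideas, for example embedding $F$ as a Siegel $\Phi$-image of a higher-degree form, or producing the requisite $S$ by a different mechanism such as Hecke action at $p$. A secondary difficulty, more tractable but still nontrivial, is pinning down the level of each $S$ as $p^eN$ rather than merely ``some level''; this should follow from the strong $q$-expansion principle together with the degree-$n$ extension of Kitaoka's transformation formula announced in the introduction.
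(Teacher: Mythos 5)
First, be aware that the statement you are asked to prove is stated in the paper as an open conjecture: the paper itself proves it only for \emph{strongly} mod $p^m$ singular forms ($n\ge r$, $p>r+1$; Theorem \ref{Thm:general}) and explicitly leaves the case $n<r$ unresolved. You correctly identify $n<r$ as the genuine obstruction, so your proposal, like the paper, cannot be a proof of Conjecture \ref{Conj0}; at best it is a sketch of Theorem \ref{Thm:general}. Judged on that restricted goal, there are two concrete gaps. The first is your choice of coefficients. Setting $c_S=a_F(S[X_0])/A(S,S)$ for a primitive $X_0$ gives $c_S=a(S)/\epsilon(S)$, and the rank-$r$ Fourier coefficient of $\sum_S c_S\theta_S^{(n+r)}$ at $T$ is $\sum_S c_S A(S,T)$, where $A(S,T)$ counts \emph{all} representations, including imprimitive ones coming from classes $S$ of smaller determinant. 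So the coefficients do not match $a_F(T)$ for general rank-$r$ matrices $T$. The paper fixes this by working with the primitive Fourier coefficients $a^*(S)$ of B\"ocherer--Raghavan, for which the identity $a(T)=\sum_S \frac{a^*(S)}{\epsilon(S)}A(S,T)$ of (\ref{Eq:Pri}) holds, and the correct weights are $c_S=a^*(S)/\epsilon(S)$ (Lemma \ref{Lem:Refine} and (\ref{Eq:0.15})).

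The second gap is finiteness. The Freitag-type rearrangement produces an a priori \emph{infinite} sum over $S\in\Lambda_r^+/{\rm GL}_r(\Z)$, and nothing in your argument truncates it. This is where the paper's ``abstract Sturm bound'' enters: one subtracts only the finitely many theta series with $\det S<M$, applies $\Phi^n$ (this is exactly where $n\ge r$ is used, to land in degree $r$ with $\widetilde g\in\widetilde M_k(\Gamma_0^{(r)}(N),\chi)$ via Corollary \ref{Cor:filt}), checks that the Fourier coefficients on a finite Sturm set $\mathcal T_{r,r-1}$ vanish because $A(S,T)=0$ when $\det T<\det S$, and concludes that the remainder is mod $p$ singular of $p$-rank $r'<r$. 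Your ``iterate downward on $r$'' is then not how one finishes: the paper instead invokes Theorem \ref{Thm:Bo-Ki} to get $r'\equiv r\bmod (p-1)$ with $0\le r'<r<p-1$, which is impossible, so the remainder vanishes mod $p$ outright. (Your inductive step in $m$ is essentially the paper's Section \ref{Sec:7}, but note that before dividing $F-G$ by $p^{m-1}$ one must multiply the theta combination by a power of the mod $p$ Eisenstein series $\mathcal E\equiv 1$ to reconcile weights and nebentypus; ``after a suitable lifting'' is hiding real work there.)
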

Our first result states that this conjecture is true in the strongly mod $p^m$ singular case (if $p$ is not small).
\begin{Thm}
\label{Thm:general}
Let $n$, $k$, $N$ be positive integers, $r$ an even integer with $n\ge r$. 
Let $p$ be a prime with $p>r+1$ and $\chi $ a quadratic
Dirichlet character mod $N$ with $\chi (-1)=(-1)^k$.  
Suppose that $F\in M_{k}(\Gamma _0^{(n+r)}(N),\chi )_{\Z_{(p)}}$ is
mod $p^m$ singular of $p$-rank $r$. 
Then we  have the following statements. 
\begin{enumerate}  \setlength{\itemsep}{-5pt}
\item
There are finitely many $S\in \Lambda _r^+$ such that 
\[F\equiv \sum_S c_S\theta_S^{(n+r)} \bmod{p^m} \quad (c_S\in \Z_{(p)}) \]
and $\widetilde{\theta _{S}^{(n)}}\in \widetilde{M}_k(\Gamma ^{(n)}_0(N),\chi )_{p^{m-\nu}}$ (and hence $\omega ^n_{N,\chi ,p^{m-\nu }}(S)\le k$). 
Here $\nu :=\nu_p(c_S)$ and $\nu _p$ is the additive valuation on $\Q$ normalized so that $\nu_p(p)=1$.  
Moreover, all $S$ involved satisfy $\chi='\chi_S$. 
\item
For a suitable $e\in {\mathbb N}$, all of $S\in \Lambda _r^+$ appearing in (1) satisfy that ${\rm level}(S)\mid p^eN$. 
\end{enumerate}
\end{Thm}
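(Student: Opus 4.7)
The plan is to prove both parts by induction on $m$, with the main work in the base case $m=1$; the inductive step follows Section~\ref{Sec:7}, subtracting a mod $p$ approximation and applying the inductive hypothesis to $(F - \sum c_S\theta_S^{(n+r)})/p$, which is mod $p^{m-1}$ singular of the same $p$-rank. For the base case the principal tool is the rank-$r$ Fourier expansion developed in Section~\ref{Sec:4}: since $F$ is mod $p$ singular of $p$-rank $r$, the coefficient $a_F(T)$ vanishes mod $p$ whenever $\rank(T)>r$. Any positive semidefinite $T\in\Lambda_{n+r}$ of rank $r$ can be written as $T = S[X]$ with $S\in\Lambda_r^+$ (unique up to $\mathrm{GL}_r(\Z)$-equivalence) and $X\in\Z^{r,n+r}$ of rank $r$. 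Using the invariance $a_F(T[U])=a_F(T)$ for $U\in\mathrm{GL}_{n+r}(\Z)$ (valid since $\chi(-1)=(-1)^k$ and $\Phi(F)\neq 0$, as noted in the remark following the definition of mod $p^m$ singular), grouping Fourier coefficients by the $\mathrm{GL}_r(\Z)$-class of $S$ produces formally
\[
F \equiv \sum_S c_S\,\theta_S^{(n+r)} \pmod{p},
\]
with $c_S = a_F(S)/A(S,S) \in \zkp$, well defined because $p>r+1$ makes $A(S,S)$ a $p$-unit by~\eqref{Mink}. The Sturm-type bound of Section~\ref{Sec:5} cuts this to a finite sum.

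To extract the filtration bound and the character relation in part~(1), I apply the Siegel $\Phi$-operator $r$ times, using $\Phi^r(\theta_S^{(n+r)}) = \theta_S^{(n)}$ and $\Phi^r(F)\in M_k(\Gamma_0^{(n)}(N),\chi)$, to obtain
\[
\sum_S c_S\,\theta_S^{(n)} \equiv \Phi^r(F) \pmod{p^m}.
\]
This sum congruence is upgraded to a statement about each individual $\widetilde{\theta_S^{(n)}}$ by downward induction on $\nu_p(c_S)$ combined with an ordering by $|\det S|$: peeling off $S_0$ with minimal $|\det S_0|$ among those of minimal valuation $\nu_0$, I compare Fourier coefficients at the index $T=\mathrm{diag}(S_0,0_{n-r})$, where only $S\sim S_0$ contributes (minimality of $|\det S_0|$ forces any representation $S_0 = S[Y]$ to have $Y\in\mathrm{GL}_r(\Z)$) and $\theta_{S_0}^{(n)}$ does so with the $p$-unit multiplicity $A(S_0,S_0)$. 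This exhibits $c_{S_0}\theta_{S_0}^{(n)}$ as congruent mod $p^{m-\nu_0}$ to a form in $M_k(\Gamma_0^{(n)}(N),\chi)$, and the relation $\chi='\chi_{S_0}$ then follows from Proposition~\ref{weightcongruence} applied to this isolated congruence.

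For part~(2), which I expect to be the main obstacle, the level of each $S$ must be controlled although the rank-$r$ decomposition gives no a priori bound. My plan is to exploit that $F$ has level $N$ while $\sum_S c_S\theta_S^{(n+r)}$ a priori has level $\mathrm{lcm}_S\,\mathrm{level}(S)$, and to invoke the modified $q$-expansion principle of Section~\ref{Sec:8} to force any prime $\ell\neq p$ dividing some $\mathrm{level}(S)$ but not $N$ to act trivially modulo $p^m$. The degree-$n$ extension of Kitaoka's transformation formula from Section~\ref{Sec:9} provides the explicit action of an Atkin--Lehner-type operator at such $\ell$ on $\theta_S^{(n+r)}$, and comparing the transformed congruence with the original will force cancellations ruling out $S$ with $\mathrm{level}(S)\nmid p^eN$. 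The delicate point is to ensure that different $S$ in the sum cannot conspire to hide large-level ones; combining the $q$-expansion principle with the peeling-off argument by $\nu_p(c_S)$ used in part~(1) is what I expect to make this rigorous.
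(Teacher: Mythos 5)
Your overall architecture matches the paper's (induction on $m$; a rank-$r$ rearrangement of the Fourier expansion; a Sturm set for finiteness; the modified $q$-expansion principle plus Kitaoka's formula for the levels), but two steps as written do not work. First, the decomposition $T=S[X]$ of a rank-$r$ semidefinite $T$ is \emph{not} unique up to $\mathrm{GL}_r(\Z)$ unless you require $X$ to be primitive (completable to a unimodular matrix): writing $X=WG$ with $G$ primitive and $W\in M_r^*(\Z)$ shows that $T$ is represented by every class $S$ with $S[W]\sim T_{\mathrm{prim}}$ for some full-rank integral $W$. Consequently the coefficient of $\theta_S^{(n+r)}$ cannot be $a_F(S)/A(S,S)$; it must be $a^*(S)/\epsilon(S)$, where $a^*$ is the \emph{primitive} Fourier coefficient obtained by inversion over $\mathrm{GL}_r(\Z)\backslash M_r^*(\Z)$ --- this is precisely the content of Lemma \ref{Lem:Refine} and the reason the paper sets up $a^*$ at all. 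With $a(S)/A(S,S)$ the rank-$r$ coefficients simply do not match.

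Second, and more seriously, your route to the individual statement $\widetilde{\theta_{S_0}^{(n)}}\in\widetilde{M}_k(\Gamma_0^{(n)}(N),\chi)_{p^{m-\nu_0}}$ has a gap. Comparing Fourier coefficients of the single congruence $\Phi^r(F)\equiv\sum_S c_S\theta_S^{(n)}$ at the one index $\mathrm{diag}(S_0,0_{n-r})$ yields only the scalar congruence $a_{\Phi^r(F)}(\mathrm{diag}(S_0,0))\equiv\sum_S c_S A(S,S_0)$; it does not exhibit $c_{S_0}\theta_{S_0}^{(n)}$ as congruent to a modular form of level $N$, and subtracting the remaining $c_S\theta_S^{(n)}$ from $\Phi^r(F)$ is circular, since you do not yet know that those are congruent to level-$N$ forms. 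The working mechanism is the \emph{family} of degree-$n$ forms $\phi_T$ ($T\in\Lambda_r^+$) arising as coefficients of ${\boldsymbol e}(\mathrm{tr}(TZ_1))$ in $F\smat{Z_1}{0}{0}{Z_2}$ --- each a genuine element of $M_k(\Gamma_0^{(n)}(N),\chi)$ --- together with $\phi_T\equiv\sum_S A(S,T)\frac{a^*(S)}{\epsilon(S)}\theta_S^{(n)}$ and the fact that $A(S,S_0)=0$ unless $\det S\le\det S_0$; a minimal-determinant induction then isolates $a^*(S_0)\theta_{S_0}^{(n)}$ using $p\nmid A(S_0,S_0)$ (Proposition \ref{Prop:filt}). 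Once that individual congruence is available, your worry in part (2) about different $S$ ``conspiring'' disappears: Corollary \ref{Cor:Kita} applies to each $\theta_S$ separately and gives $\mathrm{level}(S)\mid p^eN$ and $\chi='\chi_S$ directly. Finally, ``the Sturm bound cuts this to a finite sum'' hides the actual mechanism: one forms $G=F-\sum_{\det S<M}\frac{a^*(S)}{\epsilon(S)}\theta_S^{(n+r)}$, applies $\Phi^{n}$ (this is where $n\ge r$ enters), checks the Sturm set to see that $\Phi^n(G)$ is mod $p$ singular of $p$-rank $\le r-1$, and uses $2k-r'\equiv 0\bmod{p-1}$ with $r'<r<p-1$ to force $G\equiv 0$; some version of this argument is needed to get both the finiteness and the full congruence for $F$ (not merely for its rank-$r$ part).
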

\begin{Rem}
\begin{enumerate}  \setlength{\itemsep}{-5pt}
\item
Actually, each $c_S$ is described in terms of the primitive Fourier coefficient for $S$ of $F$. 
For details, see the proof in Section \ref{Sec:7}. 
\item
The statement on $\widetilde{\theta _{S}^{(n)}}\in \widetilde{M}_k(\Gamma ^{(n)}_0(N),\chi )_{p^{m-\nu}}$ can be rephrased by $c_S\theta_S^{(n)}\in \widetilde{M}_k(\Gamma ^{(n)}_0(N),\chi )_{p^{m}}$.
Note also that $\nu =0$ if $m=1$.  
\item
We emphasize that we do not know anything about $e$.
\end{enumerate}
\end{Rem}
We expect that the theorem above holds in the most general case:
\begin{Conj} 
Theorem \ref{Thm:general} should hold for any prime (not only for $p>r+1$)
and without the assumption $n\geq r$. 
\end{Conj}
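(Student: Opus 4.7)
The conjecture strengthens Theorem \ref{Thm:general} in two independent directions: removing the lower bound $p>r+1$ on the prime, and removing the strongly singular assumption $n\ge r$ on the degree. The plan is to attack these separately, attempting to reduce each weakened case to the already-proved theorem.

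The hypothesis $p>r+1$ is used in the proof of Theorem \ref{Thm:general} only through (\ref{Mink}), at the step where the coefficient $c_S$ in the theta decomposition is identified with a primitive Fourier coefficient of $F$ at $S$: one divides by the automorphism number $A(S,S)$, which must be a $p$-unit. To drop the assumption, I would work with genus-weighted combinations $\Theta_{\mathrm{gen}(S)}^{(n+r)}:=\sum_{S'\in\mathrm{gen}(S)/\mathrm{GL}_r(\Z)}A(S',S')^{-1}\theta_{S'}^{(n+r)}$ in place of individual theta series; the overall normalization is then the Siegel mass of the genus, whose $p$-part can be controlled a priori in terms of $r$ and the local invariants of $S$ at $p$. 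This should yield a decomposition $F\equiv\sum c_{\mathrm{gen}(S)}\Theta_{\mathrm{gen}(S)}^{(n+r)}\bmod p^{m-\nu}$ with a bounded loss $\nu$ in the exponent. Taking the initial $m$ sufficiently large, this loss is absorbed by the inductive scheme of Section \ref{Sec:7}, and individual $\mathrm{GL}_r(\Z)$-class representatives are recovered from the genus sums via an orthogonality argument based on Theorem \ref{Thm:Diri}.

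Removing the assumption $n\ge r$ is more structural, since it underlies the rank-$r$ expansion of Section \ref{Sec:4}, where having enough off-diagonal room in the Fourier support is essential to distinguish $\mathrm{GL}_r(\Z)$-inequivalent classes. My plan here is a lifting strategy: embed $\mathbb{H}_{n+r}$ block-diagonally into $\mathbb{H}_{N+r}$ for some $N\ge r$, and try to exhibit $F$ as the iterated Siegel $\Phi$-image of a strongly mod $p^m$ singular form $\widetilde{F}$ of degree $N+r$ with the same weight, level, character, and $p$-rank $r$. If such a lift exists and stays $\Z_{(p)}$-rational, applying Theorem \ref{Thm:general} upstairs gives a theta decomposition which descends under $\Phi^{N-n}$ to one for $F$, using compatibility of $\Phi$ with mod $p^m$ congruences and the identity $\Phi(\theta_S^{(N+r)})=\theta_S^{(N+r-1)}$.

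The main obstacle is the construction of the lift $\widetilde{F}$: the $\Phi$-operator is not injective, so most forms of degree $n+r$ do not extend, and the natural candidates (Klingen-type Eisenstein lifts for cuspidal components) involve analytic continuation whose $p$-integrality is far from automatic. Even granting a lift, separating genuine $\mathrm{GL}_r(\Z)$-classes downstairs from those that collide modulo $p^m$ after $\Phi^{N-n}$ requires a refinement of the mod $p$ Sturm-type bound of Section \ref{Sec:5}, together with a sharper form of the $q$-expansion principle of Section \ref{Sec:8}, both uniform in the degree. I expect this combined descent-and-separation issue to be the genuine technical difficulty of the conjecture, and the reason the authors record it as an open problem rather than a theorem.
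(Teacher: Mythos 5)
This statement is a \emph{conjecture} which the paper explicitly leaves open; there is no proof of it in the paper to compare against, and your proposal is a strategy outline rather than a proof --- by your own admission the two central constructions (the genus-orthogonality descent and the $\Phi$-lift) are not carried out. Beyond that, the plan misdiagnoses where the hypotheses actually enter. The assumption $p>r+1$ is \emph{not} used only through (\ref{Mink}): in the proof of Theorem \ref{Thm:general}(1) for $m=1$ the residual form $g$ is shown to vanish mod $p$ by the weight congruence of Theorem \ref{Thm:Bo-Ki}, which gives $r'\equiv r \bmod{(p-1)}$ for the $p$-rank $r'$ of $g$ and forces $r'=r$ precisely because $r<p-1$; if $p\le r+1$, a nonzero mod $p$ singular remainder of $p$-rank $r-(p-1)$ cannot be excluded this way, and genus-weighted theta series do nothing to address that. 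The same inequality is also needed in Proposition \ref{Prop:filt} (again via (\ref{Mink})) to run the minimal-determinant induction at all. Moreover, your device of ``taking the initial $m$ sufficiently large'' to absorb the $p$-adic loss coming from the Siegel mass is illegitimate: $m$ is part of the data of the given mod $p^m$ singular form, not a parameter you may enlarge.

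For the removal of $n\ge r$, your lifting strategy requires a $\Z_{(p)}$-rational preimage of $F$ under $\Phi^{N-n}$ of the same weight, level, character and $p$-rank, which you concede you cannot construct; since $\Phi$ is far from surjective on the relevant spaces, this is not a reduction but a restatement of the difficulty. Note that the paper's only unconditional result in the regime $n<r$ (Theorem \ref{Thm:r=2}, where $n$ may equal $1$ and $r=2$) proceeds by an entirely different route --- direct mod $p$ analysis of binary theta series via Kitaoka's degree-one formula, the Dirichlet--Weber representation results, and filtration estimates for $\theta_S^{(1)}$ --- not by lifting to higher degree. So the proposal neither proves the conjecture nor correctly isolates the obstructions that make it a conjecture.
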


The smallest weights (see Theorem \ref{Thm:Bo-Ki}) where we can expect mod $p$ singular forms,
which are not true singular forms, is of special interest. 
In these cases, we expect also the power $e$ of $p$ in the levels of theta series to be the smallest possible:   
\begin{Conj}
\label{Conj4}
Let $n$ be a positive integer, $r$ an even integer, and $p$ a prime with $p\ge n+r+3$. 
We put 
\[k=k(p,r):=
\begin{cases} 
r/2+(p-1)/2\quad &\text{if}\quad  r\equiv 2 \bmod{4},\ p\equiv -1 \bmod{4} \\
r/2+p-1\quad &\text{if}\quad  r\equiv 0 \bmod{4}. 
\end{cases}\]
Let $N$ be a positive integer and $\chi $ a quadratic Dirichlet character mod $N$ with $\chi (-1)=(-1)^k$. 
Suppose that $F\in M_{k}(\Gamma _0^{(n+r)}(N),\chi )_{\Z_{(p)}}$ is mod $p$ singular of $p$-rank $r$.  
Then we have 
\begin{align*}
F\equiv \sum _{\substack{S \in \Lambda _{r} / {\rm GL}_r(\Z) \\ {\rm level}(S)\mid pN}} c_S\theta ^{(n+r)}_{S} \bmod{p}\quad (c_S\in \mathbb{Z}_{(p)}). 
\end{align*} 
\end{Conj}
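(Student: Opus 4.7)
The plan is to bootstrap Theorem \ref{Thm:general} by showing that, at the minimal weights $k(p,r)$, the power $e$ of $p$ in the level $p^eN$ of Theorem \ref{Thm:general}(2) must be $\le 1$. Starting from the congruence
\[
F \equiv \sum_S c_S\, \theta_S^{(n+r)} \pmod p, \qquad c_S\in \Z_{(p)},
\]
provided by Theorem \ref{Thm:general}(1), I would partition the sum according to $e_S := \nu_p({\rm level}(S))$. The target claim is then that no $S$ with $e_S \ge 2$ actually contributes, i.e.\ $c_S \equiv 0 \pmod p$ for every such $S$.

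The main reduction is the filtration inequality
\[
\omega^{n+r}_{pN,\chi,p}(\theta_S^{(n+r)}) \,>\, k(p,r) \quad \text{whenever } S \in \Lambda_r^+ \text{ has } e_S \ge 2.
\]
To establish it I would apply Kitaoka's degree-$n$ transformation formula (Section \ref{Sec:9}) at an Atkin-Lehner-type representative lying in $\Gamma_0^{(n+r)}(p)$ but not in $\Gamma_0^{(n+r)}(p^{e_S})$. Kitaoka's formula rewrites $\theta_S^{(n+r)}|_{r/2}\, g$ as a linear combination of theta series of transformed quadratic forms; a suitable choice of $g$ ensures that one of these transformed forms acquires a non-trivial $p$-part visible in the mod $p$ Fourier expansion at that cusp. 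The modified $q$-expansion principle (Section \ref{Sec:8}) then forbids these terms from being realized at level $pN$ and weight $k(p,r)$: the gap $k(p,r) - r/2 \in \{(p-1)/2,\ p-1\}$ is exactly the weight of the single Eisenstein series $E_{(p-1)/2}$ or $E_{p-1}$ which, mod $p$, can absorb a level-raising by one power of $p$ but not by two.

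The main obstacle is making this strategy work uniformly in $r$. For $r=2$, as in Theorem \ref{Thm:r=2}, binary forms are controlled by the class field theory of imaginary quadratic orders, and the Dirichlet--Weber result (Theorem \ref{Thm:Diri}) provides enough rigidity among representation numbers that the cusp-based analysis collapses to a tractable statement about a single class group. For larger $r$, several genuine difficulties appear: the transformed forms produced by Kitaoka's formula may lie in genera disjoint from that of $S$, there are many more ${\rm GL}_r(\Z)$-classes of $S$ at a given filtration weight, and genus-theoretic cancellations among the $c_S$ become possible. Ruling out all such cancellations in full generality appears to be where the real work lies, and is presumably why the authors present this statement as a conjecture rather than a theorem.
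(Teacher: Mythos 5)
First, be aware that the statement you are addressing is presented in the paper as Conjecture~\ref{Conj4}: the authors prove it only in the special case $N=1$, $r=2$ (Theorem~\ref{Thm:r=2}), so there is no full proof to compare against, and your proposal---which you yourself concede does not close for general $r$---does not supply one. Two remarks on your reduction. (a) Your opening move, invoking Theorem~\ref{Thm:general}(1), is not available throughout the conjecture's range: Conjecture~\ref{Conj4} does not assume $n\ge r$, and the case $n<r$ (e.g.\ $n=1$, $r=2$, which is exactly the case treated in Theorem~\ref{Thm:r=2}) falls outside the ``strongly mod $p$ singular'' hypothesis; the paper has to substitute special properties of binary forms there. (b) The ``genus-theoretic cancellations among the $c_S$'' that you flag as the main obstacle are in fact already ruled out by the paper's machinery: Proposition~\ref{Prop:filt} (via the minimal-determinant induction) shows that $a^*(S)\theta_S^{(n)}\bmod p$ lies in $\widetilde{M}_k(\Gamma_0^{(n)}(N),\chi)$ for \emph{each} class $S$ separately, and $c_S=a^*(S)/\epsilon(S)$ with $\epsilon(S)$ a $p$-unit. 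So the conjecture genuinely reduces, as you say, to the single filtration inequality $\omega^{r}_{N,\chi}(\theta_S^{(r)})>k(p,r)$ for every $S\in\Lambda_r^+$ with $p^2\mid{\rm level}(S)$.

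The concrete gap is that your proposed mechanism for that inequality cannot work as described. You want to apply Kitaoka's formula (Theorem~\ref{Thm:Kita}) at an Atkin--Lehner representative for the $p$-part of the level and then invoke the modified $q$-expansion principle. But in Theorem~\ref{Thm:Kita} the element $\frak{M}$ attached to the exact divisor $d=p^{e_S}$ has lower-left block $c\cdot 1_n$ with $c=N/d$ prime to $p$, so $\frak{M}\notin\Gamma_0^{(n+r)}(p)$; Theorem~\ref{Thm:q-exp} is stated (and proved) only for $\gamma\in\Gamma_0^{(n)}(p^m)$ and gives no control over $\phi|\frak{M}$ at a cusp lying over $p$. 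This is precisely why the paper uses the pair ``Kitaoka $+$ modified $q$-expansion principle'' only to pin down the prime-to-$p$ part of the level (Proposition~\ref{PropA}), and controls the $p$-part by an entirely different argument: Proposition~\ref{PropA-2} first forces the level to be a $p$-power, the elementary divisor theorem puts a binary $S$ with $p$-power determinant into the normal form $S(i,j)$, and Proposition~\ref{PropA-3} combines $\theta_{S(i,j)}^{(1)}=\theta_{S(i-1,j)}^{(1)}|V(p)$, $\theta_{S(0,j)}^{(1)}|U(p)=\theta_{S(1,j-1)}^{(1)}$ with Serre's degree-one filtration estimates to get $\omega(S(i,j))\ge p^{i+j}\cdot\frac{p+1}{2}$, whence ${\rm level}(S)=p$ (Corollary~\ref{Cor1}). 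None of these ingredients has a stated analogue for rank $r>2$: there is no normal form for rank-$r$ forms whose level is divisible by $p^2$ (their determinants need not be $p$-powers once $N>1$), and the $U(p)/V(p)$ filtration identities are only available in degree one. Your appeal to ``the single Eisenstein series $E_{(p-1)/2}$ or $E_{p-1}$ absorbing one but not two powers of $p$'' is a heuristic, not an argument. Supplying a rank-$r$ substitute for Proposition~\ref{PropA-3} is the actual open problem, and the proposal does not address it.
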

In the special case of $N=1$, $r=2$, we can prove this conjecture: 
\begin{Thm}
\label{Thm:r=2} 
Let $n$ be a positive integer and $p$ a prime with $p\ge n+5$. 
Suppose that $F\in M_{\frac{p+1}{2}}(\Gamma _{n+2})_{\Z_{(p)}}$ is mod $p$ singular of $p$-rank $2$.   
Then we have 
\begin{align*}
F\equiv \sum _{\substack{S \in \Lambda _{2} / {\rm GL}_2(\Z) \\ {\rm level}(S)=p}} c_S\theta ^{(n+2)}_{S} \bmod{p}\quad (c_S\in \mathbb{Z}_{(p)}). 
\end{align*}
Moreover for any such $F$ the degree one form $f:=\Phi^{n+1}(F)$
is nonzero mod $p$.
\end{Thm}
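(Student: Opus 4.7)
The plan is to invoke Theorem \ref{Thm:general} and then sharpen the level information using the two appendix tools.

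\textbf{Reduction via Theorem \ref{Thm:general}.} Since $p\ge n+5 > r+1=3$, Theorem \ref{Thm:general} applies with $N=1$, trivial $\chi$, $r=2$, $k=(p+1)/2$, yielding
\[ F\equiv \sum_S c_S\, \theta_S^{(n+2)} \pmod{p}, \qquad c_S\in \Z_{(p)}, \]
with $S\in \Lambda_2^+/{\rm GL}_2(\Z)$, $\omega^n_{1,\chi}(\theta_S^{(n)})\le (p+1)/2$, ${\rm level}(S)\mid p^e$ for some $e$, and $\chi='\chi_S$. Unwinding Definition~\ref{Eq_Char}: $k-r/2=(p-1)/2$ gives $t=1$, and $\chi$ trivial forces the $p$-part of $\chi_S$ to be $\bigl(\tfrac{\cdot}{p}\bigr)$, which is nontrivial. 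Hence $p\mid{\rm level}(S)$, so ${\rm level}(S)=p^{e_S}$ with $e_S\ge 1$.

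\textbf{The crux: ruling out $e_S\ge 2$.} Suppose some $S$ with $c_S\not\equiv 0\pmod p$ has ${\rm level}(S)=p^e$, $e\ge 2$. The filtration bound produces $G\in M_{(p+1)/2}(\Gamma_n)_{\Z_{(p)}}$ with $\theta_S^{(n)}\equiv G\pmod p$. Applying any $\gamma\in\Gamma_n$ and using the invariance of $G$ (level $1$), one obtains $\theta_S^{(n)}|_{(p+1)/2}\gamma \equiv \theta_S^{(n)}\pmod p$; the modified $q$-expansion principle of Appendix~\ref{Sec:8} ensures that this congruence of Fourier expansions at the $\gamma^{-1}\infty$-cusp is legitimate (i.e., it makes sense $p$-integrally). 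Taking $\gamma=W_{p^e}$ the Fricke involution, the degree-$n$ Kitaoka transformation formula from Appendix~\ref{Sec:9} gives
\[ \theta_S^{(n)}\big|_{(p+1)/2} W_{p^e} = p^{\delta}\cdot\alpha\cdot \theta_{S^*}^{(n)} \]
with $\alpha\in\Z_{(p)}^\times$ and $\delta>0$ whenever $e\ge 2$ (the positive $p$-valuation coming from factors like $p^{e/2}\det(2S)^{-1/2}$). Combining, $\theta_S^{(n)}\equiv 0\pmod p$, contradicting $A(S,0)=1$. I expect this step to be the principal obstacle: the Kitaoka formula in degree $n$ must be carefully married with the $q$-expansion principle to track $p$-adic valuations through the Fricke involution, and the vanishing exponent $\delta$ is exactly what marks the boundary between the admissible case $e=1$ and the forbidden cases $e\ge 2$.

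\textbf{Non-vanishing of $f=\Phi^{n+1}(F)$.} After the refinement above, every $S$ in the sum has ${\rm level}(S)=p$. Applying $\Phi^{n+1}$ (and using $\Phi(\theta_S^{(j+1)})=\theta_S^{(j)}$) gives
\[ f \equiv \sum_S c_S\,\theta_S^{(1)} \pmod p. \]
Since $F$ has $p$-rank $2$, at least one $c_{S_0}\not\equiv 0\pmod p$. By Theorem~\ref{Thm:Diri}(1), infinitely many primes $\ell$ are primitively represented by $S_0$; choose one with $\ell\nmid p\cdot \det(2S_0)$ and $A(S_0,\ell)\not\equiv 0\pmod p$ (possible since $A(S_0,\ell)$ is uniformly bounded by the class number of the associated order). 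Theorem~\ref{Thm:Diri}(2) then rules out representation of $\ell$ by any other class $S\not\sim S_0$ of the same discriminant, while classes of differing discriminants are separated by the supports of their $q$-expansions. The $q^\ell$-coefficient of $f$ therefore reduces to $c_{S_0}\,A(S_0,\ell)\not\equiv 0\pmod p$, proving $f\not\equiv 0\pmod p$.
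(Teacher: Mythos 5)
Your overall architecture (reduce to a linear combination of binary theta series, pin down the level, then prove nonvanishing of $\Phi^{n+1}(F)$ via Dirichlet--Weber) parallels the paper's, and your final step is essentially the paper's linear-independence argument for the $\theta_{T_j}^{(1)}$. But there are two genuine gaps. First, Theorem \ref{Thm:general} carries the hypothesis $n\ge r$, so for $r=2$ it only applies when $n\ge 2$, whereas Theorem \ref{Thm:r=2} allows $n=1$ (degree $3$ forms) --- precisely the case the paper singles out as ``the simplest case where the condition strongly mod $p$ singular is violated'' and treats by a separate argument resting on Proposition \ref{Prop:filt} (which does not need $n\ge r$), the $\Phi$-operator, and special properties of binary quadratic forms. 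As written, your proof does not cover $n=1$.

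Second, and more seriously, your ``crux'' step ruling out ${\rm level}(S)=p^e$ with $e\ge 2$ does not work. The modified $q$-expansion principle (Theorem \ref{Thm:q-exp}) is stated only for $\gamma\in\Gamma_0^{(n)}(p^m)$ and gives no $p$-integrality control at the cusp reached by the Fricke involution at $p$; moreover, to compare $\theta_S^{(n)}$ (weight $1$) with $G$ (weight $\frac{p+1}{2}$) one must first multiply by ${\mathcal E}^t$, and ${\mathcal E}$ is $\equiv 0 \bmod p$ at exactly those cusps, so the comparison degenerates there. This is why the paper explicitly ``does not know anything about $e$'' in general. Worse, the conclusion your mechanism would yield --- that congruence of $\theta_S^{(n)}$ to a level-one form forces $\theta_S^{(n)}\equiv 0$ once $e\ge 2$ --- is false: $\theta_{pS_0}^{(1)}=\theta_{S_0}^{(1)}|V(p)$ has level $p^3$ yet is congruent mod $p$ to a level-one form (of weight $p\cdot\frac{p+1}{2}$). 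The genuine obstruction is the \emph{weight}: the paper proves $\omega(\theta_{S(i,j)}^{(1)})\ge p^{i+j}\cdot\frac{p+1}{2}$ via the degree-one filtration identities $\omega(f|V(p))=p\,\omega(f)$ and $\omega(f|U(p))\le\omega(f)$ combined with $\theta_{S(i,j)}^{(1)}=\theta_{S(i-1,j)}^{(1)}|V(p)$ and $\theta_{S(0,j)}^{(1)}|U(p)=\theta_{S(1,j-1)}^{(1)}$ (after first reducing to a pure $p$-power level through Proposition \ref{PropA-2}), and it is this lower bound that forces $i=j=0$ at weight $\frac{p+1}{2}$. A Kitaoka/Fricke computation with an alleged positive valuation $\delta$ cannot substitute for this; the relevant constant need not have positive $p$-valuation, and the integrality of both sides at that cusp is exactly what is unavailable.
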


\begin{Rem}
We remark that Theorem \ref{Thm:r=2}
has some potential for showing the nonvanishing of
Fourier coefficients mod $p$ in some interesting cases, e.g. 
a degree $3$ modular form satisfying $\Phi^2(F)\equiv 0$ mod $p$
cannot be mod $p$ singular. 
This implies the existence of some nonvanishing
rank 3-Fourier coefficients $a_F(T)\bmod p$ of Klingen-Eisenstein series
$F:=E^{3,2}_k$ attached to a cusp form $h$ of degree 2, provided that its
Fourier coefficients are in ${\mathbb Z}_{(p)}$.
Note that such Fourier coefficients are quite delicate,
given by some critical values of
$L$-series attached to $h$ and to $T\in \Lambda^+_3$, if $h$ is a Hecke eigenform
(see \cite{Bo,Kli} for more details on Klingen-Eisenstein series).
To cover more general
cases, versions of this for congruences modulo  prime ideals
in the field generated by the Hecke eigenvalues of $h$ 
will be necessary.
\end{Rem}

\section{Refinement of Freitag's expansion}
\label{Sec:4}
In this section, inspired by Freitag \cite{FreiA,Frei}, we give a
formal expansion of the ``singular part'' of the Fourier expansion of 
any modular form and apply it to mod $p^m$ singular forms.

We fix some notation. 
Let $M_n(R)$ be the set of all $n\times n$ matrices whose components are in $R$.  
We put $M^{(r)}_n(\Z):=\{M\in M_n(\Z)\;|\; {\rm rank}(M)=r\}$ and $M^{*}_n(\Z):=M^{(n)}_n(\Z)$. Similarly we write  
$\Lambda _n^{(r)}:=\{T\in \Lambda _n\;|\; {\rm rank}(T)=r\}$ ($\Lambda _n^+=\Lambda _n^{(n)}$).
Let $F$ be a modular form of degree $n+r$ with Fourier expansion 
\[F(Z)=\sum _{T\in \Lambda _{n+r}}a(T){\boldsymbol e}({\rm tr}(TZ)).\]
We write $a(S):=a\smat{0}{0}{0}{S}$ when $S\in \Lambda _r$.
We define a subseries $F_{[r]}$ of $F$ as
\[F_{[r]}(Z):=\sum _{T\in \Lambda _{n+r}^{(r)}} a(T){\boldsymbol e}({\rm tr}(TZ)). \]

In B\"ocherer-Raghavan \cite{Bo-Ra} (see page 82 and 83),  the notion of ``primitive Fourier coefficient'' was introduced; 
we denote it by $a^*(S)$ for $S$ positive definite. Namely, 
$a^*(S)$ is defined by the formula
\[a(S)=\sum _{\substack{G\in {\rm GL}_r(\Z)\backslash M^*_r(\Z)\\S[G^{-1}]\in \Lambda _r}}a^*(S[G^{-1}]). \] 
We recall that by this formula, we can define a new ${\rm GL}_r(\Z)$-invariant function $a^*(S)$
starting from the ${\rm GL}_r(\Z)$-invariant function
$T\longmapsto a(T)$ on $\Lambda_r$.

As explained in \cite{Bo-Ra}, this can also be written as
\begin{align}
\label{Eq:Pri}
a(T)=\sum _{S\in \Lambda _r^+ / {\rm GL}_r(\Z)} \frac{1}{\epsilon (S)}\sum _{\substack{ W\in M_r^{*}(\Z) \\ S[W]=T}} a^*(S),  
\end{align}
where $\epsilon (S):=A(S,S)$. We use this version.

Using the Fourier coefficients $a(S)$ with $S\in \Lambda _r^+$ and their modification, 
a slight refinement of Freitag's argument
gives by formal rearrangement of the Fourier expansion our
crucial identity. Note that the statements are only claiming the equality
of the Fourier coefficients on both sides (ignoring questions concerning
convergence!).

\begin{Lem}
\label{Lem:Refine}
Let $\chi $ be a Dirichlet character mod $N$ such that $\chi (-1)=(-1)^k$. 
Let $F\in M_k(\Gamma ^{(n+r)}_0(N),\chi )$. 
Then we have
\begin{align}
\label{Eq:0.1}
F_{[r]}(Z)&=\sum _{S\in \Lambda_{r}^{+}/{\rm GL_r}(\Z) }
\frac{a^*(S)}{\epsilon(S)}\sum _{\substack{(X_1,X_2)\in \Z^{r,r}\times \Z^{r,n}\\{\rm rank}(X_1,X_2)=r}}{\boldsymbol e}({\rm tr}(S[(X_1,X_2)]Z)). 
\end{align}
\end{Lem}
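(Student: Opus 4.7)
The plan is to verify \eqref{Eq:0.1} by matching, for every $T\in\Lambda_{n+r}^{(r)}$, the coefficient of ${\boldsymbol e}({\rm tr}(TZ))$ on both sides. Since $\chi(-1)=(-1)^k$, the Fourier coefficients of $F$ satisfy $a(T[U])=a(T)$ for every $U\in{\rm GL}_{n+r}(\Z)$, as recalled on Page~4 and in the remark following Definition~2.1. Given such a $T$ I would first pick $U\in{\rm GL}_{n+r}(\Z)$ and $T_0\in\Lambda_r^+$ with $T[U]=\smat{0_n}{0}{0}{T_0}$, so that $a(T)=a(T_0)$. Then I would apply the primitive-coefficient expansion \eqref{Eq:Pri} to rewrite
\[a(T)=a(T_0)=\sum_{S\in\Lambda_r^+/{\rm GL}_r(\Z)}\frac{a^*(S)}{\epsilon(S)}\,A(S,T_0).\]

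The second step is to identify $A(S,T_0)$ with the count appearing inside the right-hand side of \eqref{Eq:0.1}, namely
\[\#\bigl\{(X_1,X_2)\in\Z^{r,r}\times\Z^{r,n}\,:\,\rank(X_1,X_2)=r,\;S[(X_1,X_2)]=T\bigr\}.\]
I would do this by the explicit bijection $W\longmapsto(0_{r,n},W)\,U^{-1}$. In the reverse direction, given $(X_1,X_2)$ with $S[(X_1,X_2)]=T$, set $Y:=(X_1,X_2)U$ and write $Y=(Y_1,Y_2)$ with $Y_1\in\Z^{r,n}$ and $Y_2\in\Z^{r,r}$; then
\[S[Y]=T[U]=\smat{0_n}{0}{0}{T_0},\]
so the top-left block reads $S[Y_1]=0_n$. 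Since $S$ is positive definite this forces $Y_1=0$, whence $S[Y_2]=T_0$ and $\rank(X_1,X_2)=\rank(Y)=\rank(Y_2)$, so the rank-$r$ condition becomes $Y_2\in M_r^*(\Z)$. Note $(0_{r,n},W)U^{-1}$ is automatically integral because $U^{-1}$ has integer entries.

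The main (and essentially the only) delicate point is this bijection: one has to choose the block convention $\smat{0_n}{0}{0}{T_0}$ compatibly with the convention $a(S)=a\smat{0}{0}{0}{S}$ from Page~4, and invoke positive-definiteness of $S$ to force the first $n$ columns of $Y$ to vanish. Once the counting identity $A(S,T_0)=\#\{(X_1,X_2)\,:\,\ldots\}$ is in hand, \eqref{Eq:0.1} follows by a purely formal rearrangement of the Fourier expansion, in the spirit of Freitag's original approach.
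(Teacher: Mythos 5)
Your proof is correct and is essentially the paper's argument read coefficientwise: both hinge on the primitive-coefficient identity \eqref{Eq:Pri} together with the observation that, after reducing a rank-$r$ semidefinite $T$ to the block form $\smat{0_n}{0}{0}{T_0}$, the solutions of $S[(X_1,X_2)]=T$ are counted exactly by $A(S,T_0)$ (the paper packages this as the decomposition $(X_1,X_2)=W(G_1,G_2)$ into a nonsingular part times a primitive completion and rearranges the whole series). Your coefficient-by-coefficient matching is a legitimate, slightly more careful presentation of the same computation, and it correctly sidesteps the convergence caveat the paper must flag.
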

Note that
\begin{align*}
\sum _{\substack{(X_1,X_2)\in \Z^{r,r}\times \Z^{r,n}}}{\boldsymbol e}({\rm tr}(S[(X_1,X_2)]Z))&=\sum _{\substack{X\in \Z^{r,n+r}}}{\boldsymbol e}({\rm tr}(S[X]Z))=\theta _S^{(n+r)}(Z).
\end{align*}
Hence if we can prove this lemma, then $F_{[r]}$ can be expressed by a infinite linear combination of (subseries of) theta series;
\begin{align}
\label{Eq:0.15}
F_{[r]}=\sum _{S\in  \Lambda_{r}^{+}/{\rm GL_r}(\Z)}\frac{a^*(S)}{\epsilon(S)}(\theta ^{(n+r)}_S)_{[r]}.
\end{align}

\begin{proof}[Proof of Lemma \ref{Lem:Refine}]
For $X_1\in \Z^{r,r}$, $ X_2\in \Z^{r,n}$, we can take $W\in M^*_r(\Z)$ such that $(X_1,X_2)=W(G_1,G_2)$ and $\smat{*}{*}{G_1}{G_2}\in {\rm GL}_{n+r}(\Z)$.
This $W$ can be regarded as ``gcd'' of $X_1$ and $X_2$. 
We observe that such $W$ is unique up to a factor in ${\rm GL}_r(\Z)$ from the right. 
We switch this action of ${\rm GL}_r(\Z)$ to $(G_1,G_2)$. 
Then we can write the right hand side of (\ref{Eq:0.1}) as
\begin{align}
\label{Eq:0.2}
\sum _{S\in  \Lambda_{r}^{+}/{\rm GL_r}(\Z)}\frac{a^*(S)}{\epsilon(S)}\sum _{W\in M^*_r(\Z)} \sum _{\substack{(G_1,G_2)\in 
{\rm GL}_r(\Z)\backslash \Z^{r,r}\times \Z^{r,n}\\ \smat{*}{*}{G_1}{G_2}\in {\rm GL}_{n+r}(\Z)
}}
{\boldsymbol e}({\rm tr}(S[W][(G_1,G_2)]Z))
\end{align}
where ${\rm GL}_r(\Z)\backslash \Z^{r,r}\times \Z^{r,n}=\sim \backslash \Z^{r,r}\times \Z^{r,n}$ and $(X_1, X_2)\sim (Y_1,Y_2)$ means that $(X_1, X_2)=G(Y_1,Y_2)$ for some $G\in {\rm GL}_r(\Z)$.

We put $S[W]=T$ and we rewrite the summation over $S$ as over $T$.
Then (\ref{Eq:0.2}) becomes 
\begin{align}
\label{Eq:0.3}
\sum _{T\in \Lambda _{r}^{+}}a(T) \sum _{
\substack{(G_1,G_2)\in 
{\rm GL}_r(\Z)\backslash \Z^{r,r}\times \Z^{r,n}\\ \smat{*}{*}{G_1}{G_2}\in {\rm GL}_{n+r}(\Z)
}}
{\boldsymbol e}({\rm tr}(T[(G_1,G_2)]Z))
\end{align}   
because of (\ref{Eq:Pri}).

If we put $U:=\smat{*}{*}{G_1}{G_2}$, then we have $T[(G_1,G_2)]=\smat{0}{0}{0}T[U]$. 
Therefore we have $a(T)=a\smat{0}{0}{0}{T}=a(T[(G_1,G_2)])$. 
Then (\ref{Eq:0.3}) can be written as 
\begin{align*}
\sum _{T\in \Lambda _{r}^{+}}&\sum _{
\substack{(G_1,G_2)\in 
{\rm GL}_r(\Z)\backslash \Z^{r,r}\times \Z^{r,n}\\ \smat{*}{*}{G_1}{G_2}\in {\rm GL}_{n+r}(\Z)
}}
a(T[(G_1,G_2)]) {\boldsymbol e}({\rm tr}(T[(G_1,G_2)]Z))\\
&= \sum _{T\in \Lambda _{n+r}^{(r)}}a(T){\boldsymbol e}({\rm tr}(TZ))=F_{[r]}(Z). 
\end{align*}

Here the first equality in this formula follows from the fact that, if $T\in \Lambda _r^+$ and $(G_1,G_2)$ run as in the subscript, then $T[(G_1,G_2)]$ runs over all elements of $\Lambda _{n+r}^{(r)}$.
\end{proof}
Note that $F_{[r]}$ is not a modular form because some part of the Fourier expansion is missing. 

Let $Z_1\in \hh_r$, $Z_2\in \hh_n$. 
Consider the restriction of $F$ to $Z=\smat{Z_1}{0}{0}{Z_2}$; 
\[F\mat{Z_1}{0}{0}{Z_2}=\sum _{\substack{\smat{T}{*}{*}{*}\in \Lambda _{n+r} \\ T \in \Lambda _{r}}} \phi _T (Z_2)\e ({\rm tr}(TZ_1)). \]
Then we have $\phi _T(Z_2)\in M_k(\Gamma ^{(n)}_0(N),\chi )$ for any $T\in \Lambda _{r}$. For the proof of this fact, we refer to Andrianov \cite{And} (page 83 and 84). 

On the other hand, we denote by $F^{\sharp}$ the subseries of $F=\sum _{\mathfrak{T}\in \Lambda _{n+r}}a_F(\mathfrak{T})\e ({\rm tr}(\mathfrak{T}Z))$, characterized by 
\[\mathfrak{T}=\mat{T}{*}{*}{*}\quad \text{with} \quad T \in \Lambda _r^+. \] 
Namely we put 
\[F^{\sharp}(Z):=\sum _{\substack{\mathfrak{T}=\smat{T}{*}{*}{*}\in \Lambda _{n+r} \\ T \in \Lambda _r^+}}a_F(\mathfrak{T})\e ({\rm tr}(\mathfrak{T}Z)). \]
Then we have
\[F^{\sharp}\mat{Z_1}{0}{0}{Z_2}=\sum _{\substack{
\smat{T}{*}{*}{*}\in \Lambda _{n+r} \\ T \in \Lambda _r^+}} \phi _T (Z_2)\e ({\rm tr}(TZ_1)).\]
Note that still we have $\phi _T (Z_2)\in M_k(\Gamma ^{(n)}_0(N),\chi )$ for any $T\in \Lambda _r^+$. 

Now assume that $F$ is mod $p^m$ singular of $p$-rank $r$. 
Then from $F$ being  mod $p^m$ singular we obtain 
$F^{\sharp} \smat{Z_1}{0}{0}{Z_2}\equiv (F_{[r]})^{\sharp}\smat{Z_1}{0}{0}{Z_2}$ mod $p^m$. 
By Lemma \ref{Lem:Refine}, we have  
\begin{align*}
  (F_{[r]})^{\sharp}\mat{Z_1}{0}{0}{Z_2}&\equiv
  \left( \sum _{S\in \Lambda_{r}^{+}/{\rm GL_r}(\Z) }\frac{a^*(S)}{\epsilon(S)}
  \sum _{\substack{(X_1,X_2)\in \Z^{r,r}\times \Z^{r,n}\\{\rm rank}(X_1,X_2)=r}}
       {\boldsymbol e}({\rm tr}(S[X_1]Z_1)
       {\boldsymbol e}({\rm tr}(S[X_2]Z_2))\right)^{\sharp}\\
       &\equiv\sum _{S\in \Lambda_{r}^{+}/{\rm GL_r}(\Z) }\frac{a^*(S)}
       {\epsilon(S)}\sum _{\substack{(X_1,X_2)\in
           \Z^{r,r}\times \Z^{r,n}\\{\rm rank}(X_1)=r}}
       {\boldsymbol e}({\rm tr}(S[X_1]Z_1)
       {\boldsymbol e}({\rm tr}(S[X_2]Z_2))\\
       &\equiv \sum _{S\in \Lambda_{r}^{+}/{\rm GL_r}(\Z) }\frac{a^*(S)}
       {\epsilon(S)}\sum _{\substack{X_1\in \Z^{r,r}\\{\rm rank}(X_1)=r}}
       {\boldsymbol e}({\rm tr}(S[X_1]Z_1))\sum _{\substack{X_2\in \Z^{r,n}}}
       {\boldsymbol e}({\rm tr}(S[X_2]Z_2))\\
       &\equiv
       \sum _{S\in \Lambda_{r}^{+}/{\rm GL_r}(\Z) }
       \frac{a^*(S)}{\epsilon(S)}(\theta _S^{(r)}(Z_1))_{[r]}
       \theta _S^{(n)}(Z_2)\quad \bmod p^m. 
\end{align*} 
This implies that
\[\phi _T(Z_2)\equiv \sum _{S\in \Lambda_{r}^{+}/{\rm GL_r}(\Z) }
A(S,T)\frac{a^*(S)}{\epsilon(S)}\theta _S^{(n)}(Z_2) \bmod{p^m}  \]
for any $T\in \Lambda _r^+$. 
Hence we obtain that 
\[\sum _{S\in \Lambda_{r}^{+}/{\rm GL_r}(\Z) }A(S,T)\frac{a^*(S)}{\epsilon(S)}\theta _S^{(n)}(Z_2) \bmod{p^m} \in \widetilde{M}_k(\Gamma ^{(n)}_0(N),\chi )_{p^m}. \]

In the ordinary case (over $\C$), the key in Freitag's setting would be
that $a(S)$ can be different from zero only if ${\rm level}(S)\mid N$. 
In our mod $p^m$ setting, we get a condition on the filtration
of $\theta _S^{(n)}$ for $S\in \Lambda _r^+$ with $a^*(S)\not \equiv 0$ mod $p^m$. More precisely, we get the following property.  
\begin{Prop}
\label{Prop:filt}
Let $n$, $k$, $N$ be positive integers and $r$ an even integer. 
Let $p$ be a prime with $p> r+1$ and $\chi $ a quadratic Dirichlet character mod $N$ with $\chi (-1)=(-1)^k$.  
Suppose that $F\in M_k(\Gamma _0^{(n+r)}(N),\chi )_{\Z_{(p)}}$ is mod $p^m$ singular of $p$-rank $r$. 
Then we have $a^*(S)\theta _S^{(n)}$ mod $p^m$ $\in \widetilde{M}_k(\Gamma ^{(n)}_0(N),\chi )_{p^m}$ for any  $S\in \Lambda _r^+$. 
In particular, if $S\in \Lambda _{r}^+$ satisfies $a^*(S)\not \equiv 0$ mod $p^m$,  
then we have $\theta _S^{(n)}$ mod $p^m$ $\in \widetilde{M}_k(\Gamma ^{(n)}_0(N),\chi )_{p^{m-\nu}}$ (and hence  
$\omega _{N,\chi,p^{m-\nu }}^n(S)\le k$) with $\nu :=\nu _p(a^*(S))$.  
Moreover $\chi ='\chi _S$ holds. 
\end{Prop}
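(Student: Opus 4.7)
The plan is to exploit the congruence
\[\phi_T(Z_2)\equiv \sum_{S\in \Lambda_{r}^{+}/{\rm GL}_r(\Z)} A(S,T)\frac{a^*(S)}{\epsilon(S)}\theta_S^{(n)}(Z_2)\bmod{p^m}\]
derived just above the proposition (whose left-hand side lies in $M_k(\Gamma_0^{(n)}(N),\chi)$), and to isolate the contribution of a single $S$ by descent on $\det(S)$. The key observation is that $A(S,T)\neq 0$ means $S[W]=T$ for some $W\in M_r^*(\Z)$, which forces $\det(S)\det(W)^2=\det(T)$; hence $\det(S)\mid\det(T)$ with strict inequality $\det(S)<\det(T)$ unless $W\in{\rm GL}_r(\Z)$, i.e.\ $S\sim T$. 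This supplies a strict well-founded order, and for fixed $T$ the displayed sum is finite by the reduction theory of positive-definite forms.

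I would then run strong induction on $\det(S_0)$: specializing $T=S_0$ and separating off the class of $S_0$, for which $A(S_0,S_0)=\epsilon(S_0)$, gives
\[\phi_{S_0}\equiv a^*(S_0)\theta_{S_0}^{(n)}+\sum_{\substack{S\not\sim S_0\\A(S,S_0)\neq 0}}\frac{A(S,S_0)}{\epsilon(S)}a^*(S)\theta_S^{(n)}\bmod p^m,\]
where every remaining $S$ has $\det(S)<\det(S_0)$. The hypothesis $p>r+1$ together with equation (\ref{Mink}) makes each $\epsilon(S)$ a $p$-adic unit, so the coefficients $A(S,S_0)/\epsilon(S)$ lie in $\Z_{(p)}$; by the inductive hypothesis each summand $a^*(S)\theta_S^{(n)}$ on the right is already in $\widetilde{M}_k(\Gamma_0^{(n)}(N),\chi)_{p^m}$. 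Subtracting this modular residual sum from $\phi_{S_0}$ exhibits $a^*(S_0)\theta_{S_0}^{(n)}$ as an element of the same space, closing the induction and proving the first claim.

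The remaining assertions follow formally. Writing $a^*(S)=p^\nu u$ with $u\in\Z_{(p)}^\times$ and choosing any lift $H\in M_k(\Gamma_0^{(n)}(N),\chi)_{\Z_{(p)}}$ with $H\equiv p^\nu u\,\theta_S^{(n)}\bmod p^m$, reduction mod $p^\nu$ forces every Fourier coefficient of $H$ to be divisible by $p^\nu$, whence $p^{-\nu}H$ remains $p$-integral and $u^{-1}p^{-\nu}H\equiv\theta_S^{(n)}\bmod p^{m-\nu}$, yielding $\omega^n_{N,\chi,p^{m-\nu}}(S)\le k$. The character claim $\chi='\chi_S$ then falls out by applying Proposition \ref{weightcongruence} to the weight $k$ form $u^{-1}p^{-\nu}H$ (character $\chi$) and the weight $r/2$ theta series $\theta_S^{(n)}$ (character $\chi_S$), after an elementary level change placing both into a common $\Gamma_0^{(n)}(p^m)\cap\Gamma_1^{(n)}(N')$. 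The main obstacle I expect is precisely this last bookkeeping step: one must arrange that $\chi$ and $\chi_S$ can simultaneously be viewed modulo $pN'$ so that Definition \ref{Eq_Char} is applicable, and verify that their $N'$-components agree—details that hinge on the conductor of the Kronecker symbol defining $\chi_S$ and its compatibility with $N$.
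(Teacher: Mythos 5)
Your proposal is correct and takes essentially the same approach as the paper: the paper's own proof is the identical descent on $\det S$, phrased as a minimal-counterexample argument rather than strong induction, using the congruence for $\phi_{S_0}$ together with the fact that $\epsilon(S_0)=A(S_0,S_0)$ is a $p$-adic unit when $p>r+1$. Your handling of the ``in particular'' clause (extracting the factor $p^{\nu}$ from a lift) and the appeal to Proposition \ref{weightcongruence} for $\chi='\chi_S$ also match the paper, which simply states these final steps with less detail than you supply.
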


\begin{proof}
Seeking a contradiction, we suppose that there exists $S$ such that the claim is not true. 
Let $S_0$ be one of $S$ such that $\det S_0$ is minimal among such $S$. 
Then we consider 
\begin{align*}
\phi _{S_0}-&\sum _{\substack{S\in \Lambda_{r}^{+}/{\rm GL_r}(\Z) \\ \det S<\det S_0}}A(S,S_0)\frac{a^*(S)}{\epsilon(S)}\theta _S^{(n)}\\
&\equiv\sum  _{\substack{S\in \Lambda_{r}^{+}/{\rm GL_r}(\Z) \\ \det S\ge \det S_0}}A(S,S_0)\frac{a^*(S)}{\epsilon(S)}\theta _S^{(n)}\\
&\equiv A(S_0,S_0)\frac{a^*(S_0)}{\epsilon(S_0)}\theta _{S_0}^{(n)}\bmod p^m. 
\end{align*}
Here the last congruence follows from the facts that
$\det S>\det S_0$ implies $A(S,S_0)=0$ and $\det S=\det S_0$,
$S\not \sim S_0$ mod ${\rm GL}_r(\Z)$ implies $A(S,S_0)=0$.
By the assumption, we have
\[\phi _{S_0}-\sum _{\substack{S\in {\rm GL}_r(\Z)\backslash \Lambda _r ^+ \\ \det S<\det S_0}}A(S,S_0)\frac{a^*(S)}{\epsilon(S)}\theta _S^{(n)}\bmod{p^m} \in
\widetilde{M}_k(\Gamma ^{(n)}_0(N),\chi )_{p^m}. \] 
This implies $A(S_0,S_0)\frac{a^*(S_0)}{\epsilon(S_0)}
\theta _{S_0}^{(n)}\bmod{p^m} \in \widetilde{M}_k(\Gamma ^{(n)}_0(N),\chi )_{p^m}$. 
By the fact (\ref{Mink}), we have $p\nmid A(S_0,S_0)$.
This shows $a^*(S_0)\theta _{S_0}^{(n)}\bmod p^m\in
  \widetilde{M}_k(\Gamma ^{(n)}_0(N),\chi )_{p^m}$. 
This is a contradiction.  
Hence we have $a^*(S)\theta _S^{(n)} \bmod p^m\in
  \widetilde{M}_k(\Gamma ^{(n)}_0(N),\chi )_{p^m}$ for any
  $S\in \Lambda _{r}^+$. 
  In particular if $a^*(S)\not \equiv 0$ mod $p^m$, then we have $\theta _S^{(n)}$ mod $p^m$ $\in \widetilde{M}_k(\Gamma ^{(n)}_0(N),\chi )_{p^{m-\nu}}$ with $\nu :=\nu _p(a^*(S))$, and therefore  
$\omega _{N,\chi,p^{m-\nu }}^n(S)\le k$. 
The statement $\chi ='\chi _S$ follows from Proposition \ref{weightcongruence}. 
\end{proof}
For later use, we mention a simple consequence of Proposition \ref{Prop:filt} for the mod $p$ case. 
\begin{Cor}
\label{Cor:filt}
Let $n$, $k$, $N$ be positive integers, $r$ an even integer with $n\ge r$. 
Let $p$ be a prime with $p> r+1$ and
$\chi $ a quadratic Dirichlet character mod $N$ with $\chi (-1)=(-1)^k$.  
Suppose that $F\in  M_k(\Gamma ^{(n+r)}_0(N),\chi )_{\Z_{(p)}}$ is mod $p$ singular of $p$-rank $r$.  
\begin{enumerate}  \setlength{\itemsep}{-5pt}
\item
For any $S\in \Lambda _{r}^+$ with $a^*(S)\not \equiv 0$ mod $p$, we have $\widetilde{\theta _S^{(r)}} \in \widetilde{M}_k(\Gamma ^{(r)}_0(N),\chi )$. 
\item
For $S\in \Lambda _{r}^+$ with $a(S)\not \equiv 0$ mod $p$ such that $\det S$ is minimal in such $S$, 
we have $\widetilde{\theta _S^{(n)}} \in \widetilde{M}_k(\Gamma ^{(n)}_0(N),\chi )$ and therefore $\omega _{N,\chi }^n(S)\le k$.
\end{enumerate}
\end{Cor}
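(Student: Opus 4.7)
The plan is to deduce both parts directly from Proposition \ref{Prop:filt} specialized to $m=1$, in which case the exponent $\nu = \nu_p(a^*(S))$ vanishes whenever $a^*(S) \not\equiv 0 \bmod p$, so the proposition's conclusion collapses to $\widetilde{\theta_S^{(n)}} \in \widetilde{M}_k(\Gamma_0^{(n)}(N), \chi)$.

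For part (1), given $S \in \Lambda_r^+$ with $a^*(S) \not\equiv 0 \bmod p$, Proposition \ref{Prop:filt} immediately supplies $\widetilde{\theta_S^{(n)}} \in \widetilde{M}_k(\Gamma_0^{(n)}(N), \chi)$. I will then apply the Siegel $\Phi$-operator $(n-r)$ times to descend to degree $r$. Since $\Phi$ preserves the level and character and commutes with mod $p$ reduction, it suffices to note $\Phi(\theta_S^{(n)}) = \theta_S^{(n-1)}$; this follows from the Fourier expansion $\theta_S^{(n)}(Z) = \sum_T A(S,T)\e({\rm tr}(TZ))$ together with the positive-definiteness of $S$, which forces the only surviving contributions in the $\Phi$-limit to come from integral vectors whose last column vanishes. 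Iterating $(n-r)$ times yields $\widetilde{\theta_S^{(r)}} \in \widetilde{M}_k(\Gamma_0^{(r)}(N), \chi)$.

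For part (2), the main task is to show that $a^*(S) \not\equiv 0 \bmod p$ under the stated hypotheses, after which Proposition \ref{Prop:filt} delivers both $\widetilde{\theta_S^{(n)}} \in \widetilde{M}_k(\Gamma_0^{(n)}(N), \chi)$ and the filtration bound $\omega_{N,\chi}^n(S) \le k$ at once. The key identity is the defining relation
\[a(T) = \sum_{\substack{G \in {\rm GL}_r(\Z) \backslash M^*_r(\Z) \\ T[G^{-1}] \in \Lambda_r}} a^*(T[G^{-1}]),\]
in which $\det(T[G^{-1}]) = \det T / (\det G)^2 \le \det T$ with equality if and only if $G \in {\rm GL}_r(\Z)$. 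Inverting this triangular relation by induction on $\det T$, the minimality hypothesis on $\det S$ propagates downward to give $a^*(T) \equiv 0 \bmod p$ for every $T \in \Lambda_r^+$ with $\det T < \det S$. Substituting back into the formula for $a(S)$ collapses all terms except the $G = I$ summand, producing $a(S) \equiv a^*(S) \bmod p$, hence $a^*(S) \not\equiv 0 \bmod p$, and Proposition \ref{Prop:filt} concludes.

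The argument involves no serious obstacle beyond this triangular inversion passing from $a$ to $a^*$; once the base case $a^* \equiv 0 \bmod p$ on determinants strictly below $\det S$ is established, everything else is formal. No tools beyond Proposition \ref{Prop:filt} and the classical behavior of $\Phi$ on theta series are required.
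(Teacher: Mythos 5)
Your proof is correct and follows essentially the same route as the paper: part (1) is Proposition \ref{Prop:filt} followed by iterating $\Phi$ on the theta series (using $\Phi(\theta_S^{(n)})=\theta_S^{(n-1)}$ for positive definite $S$), and part (2) reduces to showing $a^*(S)\not\equiv 0\bmod p$ from the relation between $a$ and $a^*$. If anything, your triangular inversion in part (2) is more careful than the paper's one-line assertion that $a^*(S)=a(S)$ under the minimality condition, which with a merely mod-$p$ hypothesis is literally only the congruence $a^*(S)\equiv a(S)\bmod p$ --- exactly what your induction on $\det T$ establishes and all that is needed.
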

\begin{Rem}
  The statement (2) can be proved also by the mod $p$ version of
  Freitag's original arguments in \cite{Frei}. 
Our strategy can be viewed as a refinement of his method.
\end{Rem}
\begin{proof}[Proof of Corollary \ref{Cor:filt}]
(1) By Proposition \ref{Prop:filt}, we have $\widetilde{\theta _S^{(n)}} \in \widetilde{M}_k(\Gamma ^{(n)}_0(N),\chi )$. 
Hence the claim follows from $\Phi ^{n-r}(\theta _S^{(n)})=\theta _S^{(r)}$ immediately. \\
(2) If $S\in \Lambda _{r}^+$ satisfies such the minimality condition, we have $a^*(S)=a(S)$ by the definition of $a^*(S)$. 
The claim follows from this fact.  
\end{proof}

\section{Description by theta series for mod $p$ case}
\label{Sec:5}
In this section, we prove Theorem \ref{Thm:general} (1) for $m=1$. 
Therefore, we treat with the case of mod $p$ in this section. 
The goal is to show that all  strongly mod $p$ singular modular forms
are represented by a linear combination of finitely many  theta series.
In our method, two new tools are  important:
the existence of an ``abstract Sturm bound"
for detecting mod $p$ singular forms
and the  refinement of Freitag's expansion as exposed in the previous section.
The nice thing is that we do not need to consider the
exact level of the theta series.

\subsection{Abstract Sturm bounds}
In general, ``Sturm bounds" give an
explicit finite set of $T\in \Lambda_n$ such that a modular 
form of degree $n$ with Fourier coefficients in ${\mathbb Z}_{(p)}$ must be congruent
 mod $p$ to zero if 
  the Fourier coefficients for all $T$ in that finite set are divisible by $p$,
see e.g. \cite{RR, Stu}. 
Such an explicit finite set would usually contain quadratic forms of all ranks.
We need a version, which involves only quadratic forms of
rank $>r$ (finitely many) to detect mod $p$ singular forms.
We only need the existence of
such a set, but we do not discuss explicit bounds for it.
 
 Let $\chi $ be a quadratic Dirichlet character mod $N$. 
Let $M_{k,r}^{p\text{-sing}}(\Gamma _0^{(n+r)}(N),\chi)$ be the submodule of
$M_k(\Gamma _0^{(n+r)}(N),\chi )_{\Z_{(p)}}$
consisting of all mod $p$ singular modular forms with $p$-rank $\le r$. 
We denote by $\widetilde{M}_{k,r}^{p\text{-sing}}(\Gamma _0^{(n+r)}(N),\chi)$ the set of  reduction mod $p$ of elements of $M_{k,r}^{p\text{-sing}}(\Gamma _0^{(n+r)}(N),\chi)$.  
This is a subspace over $\F_p$ of the vector space $\widetilde{M}_k(\Gamma _0^{(n+r)}(N),\chi )$. 
We consider the quotient space 
\[V=V_{k,r}:=\widetilde{M}_k(\Gamma _0^{(n+r)}(N),\chi )/\widetilde{M}_{k,r}^{p\text{-sing}}(\Gamma _0^{(n+r)}(N),\chi ).\] 

Then we have $\dim V<\infty$.
For fixed $T\in \Lambda_{n+r}$ with $\mbox{rank}(T)>r$ we define a linear map
$\ell_T:V\longrightarrow {\mathbb F}_p$ by
$$\ell_T(\widetilde{F}+ \widetilde{M}_{k,r}^{p\text{-sing}}(\Gamma _0^{(n+r)}(N),\chi )):=\widetilde{a_F(T)}.$$
Clearly, the set $L$ of all such $\ell_T$ is ``total'' for $V$, i.e.
the intersection of the kernels of all 
$\ell_T$ is trivial.
By linear algebra ($V$ is of finite dimension!)
we can choose a finite subset
${\mathcal T}_{n+r,r}
=\{\ell_{T_1},\dots ,
\ell_{T_d}\}$ of $L$
which is still total for $V$.
\\[0.3cm]
{\bf Conclusion:} {\it In the situation above, there exist finitely many
$T_1,\dots ,T_d$  with all $T_j$ of rank larger than r such that
  for all $F\in M_k(\Gamma^{(n)}_0(N),\chi )_{{\mathbb Z}_{(p)}}$ the vanishing of
  $\widetilde{a_F(T_j)}$
for all $T_j$ implies that $F$ is mod $p$ singular of $p$-rank $\leq r$. }
\\[0.3cm]
We call such $\mathcal T_{n+r,r}$ a ``Sturm set'' for mod $p$ singular forms.

\subsection{Proof of Theorem \ref{Thm:general} (1) for $m=1$}
\label{Subsec:4.3}
For the Sturm set ${\mathcal T}_{r,r-1}$ $(\subset \Lambda_r^+)$ corresponding to $M_{k,r-1}^{p\text{-sing}}(\Gamma _0^{(r)}(N))$,
we take a natural number $M$ such that 
\[M>\max\{\det T\;|\;T\in {\mathcal T}_{r,r-1}\}.\] 
We put \[G:=F-\sum_{\substack{S\in \Lambda_r^+/{\rm GL}_r({\mathbb Z})\\ \det S<M}}\frac{a^*(S)}{\epsilon (S)}\theta_S^{(n+r)}\]
and consider $g:=\Phi ^{n}(G)$. 
Note here that the summation over $S$ is finite. 
Then we have 
\[g=\Phi^{n}(F)-\sum_{\substack{S\in \Lambda_r^+/{\rm GL}_r({\mathbb Z})\\ \det S<M}} \frac{a^*(S)}{\epsilon (S)}\theta_S^{(r)}\]
and $\widetilde{g}\in \widetilde{M}_{k}(\Gamma _0^{(r)}(N),\chi)$ by Corollary \ref{Cor:filt} (1). 

We prove now that $\widetilde{g}\in \widetilde{M}_{k,{r-1}}^{p\text{-sing}}(\Gamma _0^{(r)}(N),\chi )$. 
It suffices to check the Fourier coefficients $a_g(T)$ for all $T\in {\mathcal T}_{r,r-1}$. 
It follows from (\ref{Eq:0.15}) that 
\[a_g(T)=\sum _{\substack{S\in \Lambda_r^+ /{\rm GL}_r({\mathbb Z})\\
    \det S\ge M}}\frac{a^*(S)}{\epsilon (S)}A(S,T) \]
for any $T\in \Lambda _r^+$.
If $S[X]=T$ with $X\in \Z^{r,r}$ then $\det S[X]=\det S (\det X)^2 =\det T$ and hence $\det S\le \det T$. 
This implies that $A(S,T)=0$ for $T$ with $\det T < \det S$. 
Therefore we have $a_g(T)=0$ for any $T\in {\mathcal T}_{r,r-1}$. 
This implies $g\in M_{k,{r-1}}^{p\text{-sing}}(\Gamma _0^{(r)}(N),\chi )$. 

Suppose that $g\not \equiv 0$ mod $p$. 
Then $g$ is mod $p$ singular of some $p$-rank $r'< r$. 
By Theorem \ref{Thm:Bo-Ki}, we have $2k-r\equiv 2k-r'\equiv 0$ mod $p-1$ and hence $r'\equiv r$ mod $p-1$. 
Since $r'<r<p-1$, this is impossible. 
This shows $g\equiv 0$ mod $p$. 
Taking into account that $F$ and $G$ are mod $p$ singular, we obtain 
that $G\equiv 0$ mod $p$. 
This completes the proof of (1) in Theorem \ref{Thm:general}. 
\qed
\begin{Rem}
The condition ``$n\ge r$'' was necessary to assure $\widetilde{g}\in \widetilde{M}_{k}(\Gamma _0^{(r)}(N),\chi)$. 
\end{Rem}

\section{Specification of levels}
\label{Sec:6}
\subsection{Proof of Theorem \ref{Thm:general} (2) for $m=1$}
In the situation of Theorem \ref{Thm:general} we have to
specify the levels of the $S\in \Lambda_r^+$ involved: 
To do so, we use Corollary \ref{Cor:Kita}, which follows from the modified
$q$-expansion principle and Kitaoka's formula. We observe that
$\widetilde{\theta^{(r)}_S}\in \widetilde{M}_k(\Gamma^{(r)}_0(N),\chi )$;
here we need the stronger condition $p\geq r+3$ (and also $n\ge r$).
Then Corollary \ref{Cor:Kita} shows that the level of $S$ is of the form requested
and also the statement about the nebentypus follows.
\qed
\vspace{0.3cm}

Now we prove Theorem \ref{Thm:r=2} in the remainder of this section. 
Namely, we discuss the case where the $p$-rank is $2$ and the weight is
the minimal one allowing mod $p$ singular modular forms,
which are not truly singular, i.e. $1+\frac{p-1}{2}$.

 
\subsection{Proof of Theorem \ref{Thm:r=2}}
In Theorem \ref{Thm:general}, we assumed $n\ge r$. 
In this subsection, we discuss the special case of $n=1$ and $r=2$; this is the simplest case where
the condition ``strongly mod $p$ singular'' is violated.
We use instead very special properties of binary quadratic forms.

\begin{Prop}
\label{PropA-2}
Let $p$ be an odd prime. 
Let $S\in \Lambda _2^+$ be of level $N=p^j  N'$ with $N'$ coprime to $p$. 
Suppose that $\theta _S^{(1)}\equiv \phi $ mod $p$ for some $\phi \in M_k(\Gamma _1)_{\Z_{(p)}}$. 
Then $N'=1$, i.e. ${\rm level}(S)=p^j$.  
\end{Prop}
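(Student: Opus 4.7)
The plan is to derive a contradiction from assuming some prime $q$ divides $N'$. The argument splits into two phases, one about the nebentypus character and one about the content of $S$.

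\textbf{Phase 1.} First I would establish that the $N'$-component $\chi_{S,N'}$ of $\chi_S$ must be trivial. This is the $N'$-analogue of the character-matching encoded in Definition~\ref{Eq_Char}: the mod $p$ congruence $\theta_S^{(1)}\equiv\phi\bmod p$ between forms of a priori different characters forces the non-$p$ components of the characters to agree, so since $\phi$ has trivial character, $\chi_{S,N'}$ must be trivial. Concretely, I would compare Fourier coefficients at primes $l\nmid pN'$: the mod $p$ Hecke compatibility, combined with the weight-character relation from Proposition~\ref{weightcongruence}, yields $\chi_S(l)\equiv l^{k-1}\bmod p$ for all such $l$. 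By Dirichlet's theorem, for any target $a\in(\Z/N'\Z)^\times$ pick a prime $l$ with $l\equiv a\bmod N'$ and $l\equiv 1\bmod p$; then $\chi_{S,N'}(a)\equiv \chi_S(l)\equiv l^{k-1}\equiv 1\bmod p$, hence (by quadraticity and $p$ odd) $\chi_{S,N'}(a)=1$.

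\textbf{Phase 2.} Given $\chi_{S,N'}$ trivial and $q\mid N'$, I would split on the content $c:={\rm cont}(S)$. If $q\nmid c$, write $S=cS_0$ with $S_0$ primitive; then $q\mid{\rm level}(S_0)=|\det 2S_0|$ (for primitive binary forms), so the $q$-part of $\chi_S=\chi_{S_0}$ (the Kronecker symbol of the discriminant of $S_0$) is nontrivial, contradicting Phase 1. If instead $q\mid c$, write $S=qS_1$ so that $\theta_S^{(1)}(z)=\theta_{S_1}^{(1)}(qz)$; then $A(S,n)=0$ for every $n\geq 1$ coprime to $q$. The congruence thus forces $a_n(\phi)\equiv 0\bmod p$ for all such $n$, while $a_0(\phi)\equiv 1$. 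Decomposing $\phi$ into level one Hecke eigenforms and invoking mod $p$ strong multiplicity one yields $\phi\equiv\text{const}\bmod p$, whence $A(S_1,n)\equiv 0\bmod p$ for all $n\geq 1$. This contradicts Theorem~\ref{Thm:Diri}(1): there are infinitely many primes $\ell$ represented by (the primitive part of) $S_1$ with $A(S_1,\ell)=2\not\equiv 0\bmod p$ for $p$ odd.

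\textbf{Main obstacle.} The delicate step is Phase 1: the identity $\chi_S(l)\equiv l^{k-1}\bmod p$ must be extracted from a congruence between modular forms sitting in genuinely different spaces---$\theta_S^{(1)}\in M_1(\Gamma_0(p^jN'),\chi_S)$ and $\phi\in M_k(\Gamma_1)$---so the two natural Hecke actions (weight one, character $\chi_S$ at level $p^jN'$ versus weight $k$, trivial character at level one) carry different multipliers. Reconciling them requires the Serre-style mod $p$ Hecke formalism and care with the weight-character interplay from Proposition~\ref{weightcongruence}; this also has to be done without presupposing that $\theta_S^{(1)}$ already lives in a $\Gamma_1^{(1)}(N')$-type space, which is essentially what Phase 1 is trying to prove.
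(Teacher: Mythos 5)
There is a genuine gap, and it is fatal to the strategy rather than just to one step. In Phase 2, case 1, you claim that for a primitive binary form $S_0$ with $q\mid\mathrm{level}(S_0)=|\det 2S_0|$ the $q$-component of $\chi_{S_0}$ is nontrivial. This is false whenever the discriminant is not fundamental at $q$: take $S_0=\smat{1}{0}{0}{q^2}$ (the form $x^2+q^2y^2$, content $1$), whose level is $4q^2$ but whose character $\chi_{S_0}(d)=\mathrm{sign}(d)\left(\tfrac{-4q^2}{|d|}\right)$ equals $\left(\tfrac{-4}{\cdot}\right)$ on integers prime to $2q$, i.e.\ has trivial $q$-part. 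For such an $S$ your Phase 1 yields no contradiction, case 2 does not apply (the content is prime to $q$), and case 1's conclusion is wrong, so the argument terminates without contradicting anything. The root of the problem is that the nebentypus only sees the fundamental-discriminant part of $\mathrm{level}(S)$; the square part at $q$ is invisible to any character computation, so no amount of character bookkeeping (Phase 1, even if fully justified) can pin down the level. This is exactly the difficulty the paper flags in the introduction, and its proof of this proposition takes a different route: apply Kitaoka's transformation formula with $M=\smat{*}{*}{p^j}{N'}$ to move $\theta_S^{(1)}$ to the cusp associated with $N'$, observe that the transformed series $\kappa\,\theta_{S'}^{(1)}$ has Fourier support at non-integral indices with denominator $N'/p^{\alpha}$ and a coefficient $A(S',\cdot)\not\equiv 0 \bmod p$ there (via Theorem \ref{Thm:Diri}(1) and the bound (\ref{Mink})), while the level-one form $\phi$ has only integral Fourier support at every cusp; the modified $q$-expansion principle transports the congruence to that cusp. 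The square part of the level reappears there as a denominator, which is why this detects it and the character does not.

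Two further steps would need real work even if the above were repaired. Phase 1's identity $\chi_S(\ell)\equiv \ell^{k-1}\bmod p$ is a Hecke-eigenform statement; neither $\theta_S^{(1)}$ nor $\phi$ is assumed to be an eigenform, and making "the prime-to-$p$ parts of the nebentypus agree under a mod $p$ congruence" rigorous is essentially the content of the paper's Proposition \ref{PropA} (which itself rests on the two appendices). Likewise, in Phase 2, case 2, deducing $\phi\equiv\mathrm{const}\bmod p$ from the vanishing of $a_n(\phi)\bmod p$ for all $n$ prime to $q$ via "mod $p$ strong multiplicity one" is not available off the shelf (mod $p$ eigenform decompositions and multiplicity one are delicate), though this case could likely be salvaged by the $U(q)$/$V(q)$ filtration calculus; the paper avoids the case distinction entirely.
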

\begin{Rem}
To include the case of $n=1$, we should look at the degree $1$ theta series $\theta _S^{(1)}$ and $\phi \in M_k(\Gamma _1)$. 
\end{Rem}

\begin{proof}
Seeking a contradiction we suppose $N'>1$.  
We apply Theorem \ref{Thm:Kita} of Kitaoka's original result for $n=1$ to $\theta _S^{(1)}$ with $M=\smat{*}{*}{p^j}{N'}$. 
Then we have 
\[\theta_S^{(1)}|M = \kappa\cdot \theta _{S'}^{(1)},  \]
where $S'$ is the same as in the proof of  Proposition \ref{PropA}.
Then $N'S'$ is half integral, $(N', {\rm cont}(N'S'))=1$, and ${\rm cont}(N'S')=p^\alpha $ when $p^\alpha \;|| \; {\rm cont}(S)$. 
Then $\frac{N'}{p^\alpha }S'\in \Lambda _2^+$ is primitive.
Therefore we can take a prime $l$ with $l\nmid N'$ such that 
\[A\left(S',\frac{l\cdot p^\alpha }{N'}\right)=A\left(\frac{N'S'}{p^\alpha },l\right)\not \equiv 0 \bmod{p}. \] 
However, since $\phi $ is of level $1$, we have also
\[A\left(S',\frac{l\cdot p^\alpha }{N'}\right)\equiv a_\phi \left(\frac{l\cdot p^\alpha }{N'}\right)=0 \bmod{p}. \] 
This contradicts and we get $N'=1$.  
\end{proof}
Hence ${\rm level}(S)$ is a power of $p$ in the situation of Proposition \ref{PropA-2}.   
Then $\det (2S)$ also should be a power of $p$. 
By the elementary divisor theorem, we can find $U$, $V\in {\rm GL}_2(\Z)$ such that 
\[U(2S)V=\mat{p^s}{0}{0}{p^{s+t}}. \]  
Then we have $U(2S)=\smat{p^s}{0}{0}{p^{s+t}}V^{-1}$ and hence 
\[U(2S){}^tU=p^s\mat{1}{0}{0}{p^t}V^{-1}{}^tU. \]
If we put $W:=V^{-1}{}^tU=\smat{w_1}{w_2}{w_3}{w_4}$, then we can write 
\[U(2S){}^tU=p^s\mat{w_1}{p^tw_3}{w_2}{p^tw_4}. \]
Since $U(2S){}^tU$ is symmetric, we have $w_2=p^tw_3$ and 
\[U(2S){}^tU=p^s\mat{w_1}{p^tw_3}{p^tw_3}{p^tw_4}. \]
From these argument, we may assume that $S$ is of this form.
In particular, if $\det (2S)$ is an odd power of $p$, by putting $b:=p^jw_3$, we may assume that $S$ is of the form
\[S=S(i,j)=p^i \begin{pmatrix}a & bp^{j+1} \\ bp^{j+1} & dp^{2j+1}\end{pmatrix}\]
with $adp-b^2p^2=p$.  

Now we try to show that high powers of $p$ in the level of $S$ imply high filtration weight of $\theta ^{(1)}_S$:
\begin{Prop}
\label{PropA-3}
Let $p$ be an odd prime and $S\in \Lambda _2^+$. 
Suppose that $\det(2S)$ is an odd power of $p$.  
Then we have
\[\omega (\theta ^{(1)}_S)\ge p^{i+j} \cdot \frac{p+1}{2},  \]
where $i$, $j$ are determined by 
\[S=S(i,j)=p^i \begin{pmatrix}a & bp^{j+1} \\ bp^{j+1} & dp^{2j+1}\end{pmatrix}\]
with $adp-b^2p^2=p$.  
\end{Prop}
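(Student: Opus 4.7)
The plan is to reduce, in two independent stages, to the base case $(i,j)=(0,0)$, gaining a factor of $p$ in each stage. For the reduction in $i$: since $S(i,j)=p\cdot S(i-1,j)$ one has $\theta_{S(i,j)}^{(1)}(z)=\theta_{S(i-1,j)}^{(1)}(pz)$. The Fermat--Frobenius identity $h(pz)\equiv h(z)^p\pmod p$, valid for any $p$-integral $q$-series $h$, then gives $\theta_{S(i,j)}^{(1)}\equiv(\theta_{S(i-1,j)}^{(1)})^p\pmod p$. Combined with the standard identity $\omega(g^p)=p\,\omega(g)$ for the filtration of a $p$-th power of a level-one mod $p$ form (Serre's theta-cycle), iteration gives $\omega(\theta_{S(i,j)}^{(1)})=p^i\omega(\theta_{S(0,j)}^{(1)})$. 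It therefore suffices to establish $\omega(\theta_{S(0,j)}^{(1)})\ge p^j(p+1)/2$ by induction on $j$.

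For the base case $j=0$, Theorem~\ref{Thm:Bo-Na} provides a level-one lift of $\theta_{S(0,0)}^{(1)}$ in weight $(p+1)/2$, hence $\omega(\theta_{S(0,0)}^{(1)})\le(p+1)/2$. For the matching lower bound, observe that $\theta_{S(0,0)}^{(1)}$ has weight $1$ and nebentypus $\chi_{S(0,0)}$ whose restriction to $(\Z/p\Z)^\times$ is the nontrivial quadratic character. Applying Proposition~\ref{weightcongruence} to any congruence $\theta_{S(0,0)}^{(1)}\equiv f\pmod p$ with $f\in M_k(\Gamma_1)$ forces $k-1=t(p-1)/2$ with $t$ odd, and the smallest positive $k$ is $(p+1)/2$. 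For the inductive step, set $T_j:=S(0,j)$ and note the combinatorial identity $T_{j-1}[(x,py)]=T_j[(x,y)]$, which is immediate from the $p$-adic shape of the entries; this translates into the $U_p$-relations $U_p\theta_{T_j}^{(1)}=\theta_{S(1,j-1)}^{(1)}$ and $U_p\theta_{S(1,j-1)}^{(1)}=\theta_{T_{j-1}}^{(1)}$.

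The main obstacle is to convert this $U_p$-chain into the filtration inequality $\omega(\theta_{T_j}^{(1)})\ge p\cdot\omega(\theta_{T_{j-1}}^{(1)})$, because $U_p$ alone bounds filtration only weakly from below. The intended route is via Kitaoka's transformation formula (Theorem~\ref{Thm:Kita}, case $n=1$): applied to $\theta_{T_j}^{(1)}|M$ for a matrix $M\in SL_2(\Z)$ whose bottom row carries the full $p$-part $p^{2j+1}$ of the level of $T_j$, it rewrites the result as a scalar $\kappa$ times $\theta_{S'}^{(1)}$ for an auxiliary $S'$ essentially dual to $T_j$ and of level a divisor of $p$. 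Assuming a congruence $\theta_{T_j}^{(1)}\equiv f\pmod p$ with $f\in M_k(\Gamma_1)_{\Z_{(p)}}$ realizing $k=\omega(\theta_{T_j}^{(1)})$, the level-one invariance $f|M=f$, combined with the modified $q$-expansion principle of Section~\ref{Sec:8}, produces a congruence between $f$ and $\kappa\cdot\theta_{S'}^{(1)}$; comparing lowest nonvanishing Fourier coefficients and tracking the $p$-adic valuation of the Gauss-sum-type scalar $\kappa$ yields the required $k\ge p\cdot\omega(\theta_{T_{j-1}}^{(1)})$, closing the induction. The technical heart is exactly this valuation analysis, which must tie the $p$-adic size of $\kappa$ to the descent in level from $p^{2j+1}$ to $p^{2j-1}$.
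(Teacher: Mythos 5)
Your reduction in $i$ and your base case are fine and match the paper's argument (the paper phrases the $i$-reduction as $\theta^{(1)}_{S(i,j)}=\theta^{(1)}_{S(i-1,j)}|V(p)$ together with $\omega(f|V(p))=p\,\omega(f)$, which is the same fact you derive via $h(pz)\equiv h(z)^p$). You also correctly identify the key combinatorial relation $\theta^{(1)}_{S(0,j)}|U(p)=\theta^{(1)}_{S(1,j-1)}$. But the inductive step in $j$ — which you acknowledge is the technical heart — is not actually proved: you declare that ``$U_p$ alone bounds filtration only weakly from below'' and then outline a Kitaoka-plus-$q$-expansion-principle route whose decisive step (the $p$-adic valuation of $\kappa$) you leave open. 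That route is also doubtful on its face: the Kitaoka/$q$-expansion machinery in this paper is used precisely to control the prime-to-$p$ part of the level (Proposition \ref{PropA}), and the authors explicitly state they cannot control the $p$-power part $e$ by those means; Proposition \ref{PropA-3} exists exactly to supply that missing $p$-part control by a different mechanism.

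The obstacle you perceive is not there, and the pieces you already have close the induction in one line. The standard filtration inequality is $\omega(f)\ge\omega(f|U(p))$, i.e.\ $U(p)$ gives a \emph{lower} bound on $\omega(f)$ in terms of $\omega(f|U(p))$ — which is exactly the direction needed. Since $S(1,j-1)=p\cdot S(0,j-1)$, your own $U_p$-relation gives
\[
\omega\bigl(\theta^{(1)}_{S(0,j)}\bigr)\;\ge\;\omega\bigl(\theta^{(1)}_{S(0,j)}|U(p)\bigr)\;=\;\omega\bigl(\theta^{(1)}_{S(1,j-1)}\bigr)\;=\;\omega\bigl(\theta^{(1)}_{S(0,j-1)}|V(p)\bigr)\;=\;p\,\omega\bigl(\theta^{(1)}_{S(0,j-1)}\bigr),
\]
where the factor $p$ comes from the exact multiplicativity of $V(p)$ applied to the \emph{output} of $U(p)$, not from any quantitative property of $U(p)$ itself. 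Iterating and combining with the $i$-reduction and the base case $\omega(S(0,0))=\frac{p+1}{2}$ gives the claim. Replace your last paragraph with this observation and the proof is complete and coincides with the paper's.
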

Before proving this proposition, we confirm some notation and facts on filtration of modular forms mod $p$ for degree $1$ given by Serre \cite{Se} and Swwinerton-Dyer \cite{Sw}. 
Let $f=\sum _{n=0}^{\infty }a_f(n){\boldsymbol e}(nz)$ be a formal power series with $a_f(n)\in \Z_{(p)}$. 
We write $\omega (f)$ for $\omega _{1,1}^1(f)$. Let $V(p)$, $U(p)$ be the operators defined by
\begin{align*}
&f|V(p)=\sum _{n=0}^\infty a_f(n){\boldsymbol e}(pnz),\\
&f|U(p)=\sum _{n=0}^\infty a_f(pn){\boldsymbol e}(nz). 
\end{align*}
Then we have $\omega (f|V(p))=p\omega (f)$, $\omega (f|U(p))\le \omega (f)$. 

We put $\omega (S(i,j)):=\omega (\theta _{S(i,j)}^{(1)})$. 

\begin{proof}[Proof of Proposition \ref{PropA-3}]
Using the facts that $\theta _{S(i,j)}^{(1)}=\theta _{S(i-1,j)}^{(1)}|V(p)$ and $\omega (f|V(p))=p\omega (f)$, 
we have 
\[\omega (S(i,j))=p^i\omega (S(0,j)). \]
Furthermore, by comparing the representation numbers of $A(S(0,j),pn)$ and \\
$A(S(1,j-1),n)$, 
we can easily confirm that 
\[\theta ^{(1)}_{S(0,j)}| U(p)=\theta ^{(1)}_{S(1,j-1)}. \]
Since $\omega (f)\ge \omega (f|U(p))$ in general,  we have 
\[\omega (S(0,j))\ge \omega (S(1,j-1))=p\omega (S(0,j-1)). \]
This implies 
\[\omega (S(i,j))\ge p^{i+j}\omega (S(0,0))=p^{i+j}\omega (S(0,0))=p^{i+j}\cdot \frac{p+1}{2}.\]
Here the last  equality follows from $\omega (S(0,0))=\frac{p+1}{2}$ and this is due to Theorem \ref{Thm:Bo-Na} (or Serre's result in \cite{Se}). 
This completes the proof. 
\end{proof}
\begin{Cor}
\label{Cor1}
Let $p$ be a prime and $S\in \Lambda _2^+$. 
Suppose that $\theta _S^{(1)}\equiv \phi $ mod $p$ for some $\phi \in M_\frac{p+1}{2}(\Gamma _1)_{\Z_{(p)}}$. 
Then we have ${\rm level}(S)=p$.  
\end{Cor}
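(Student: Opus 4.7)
The plan is to combine the two preceding propositions. Proposition \ref{PropA-2} forces ${\rm level}(S)$ to be a pure power of $p$, and Proposition \ref{PropA-3} then gives a lower bound on $\omega(\theta_S^{(1)})$ in terms of the $p$-adic parameters of a normal form for $S$. The hypothesis $\theta_S^{(1)} \equiv \phi \pmod{p}$ with $\phi \in M_{(p+1)/2}(\Gamma_1)_{\Z_{(p)}}$ supplies the matching upper bound $\omega(\theta_S^{(1)}) \leq (p+1)/2$, and these two bounds will collapse the parameters to their minimal values.

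First, applying Proposition \ref{PropA-2} gives ${\rm level}(S) = p^{\ell}$ for some $\ell$, so $\det(2S) = p^{\alpha}$ for some $\alpha \geq 0$. A parity check shows $\alpha$ is odd: since $\det(2S) = 4 t_{11} t_{22} - (2 t_{12})^2$ is always $\equiv 0$ or $3 \pmod{4}$ and $p^{\alpha}$ is odd, we must have $p^{\alpha} \equiv 3 \pmod{4}$. For $p$ odd this forces $p \equiv 3 \pmod{4}$ and $\alpha$ to be odd; in particular $\alpha \geq 1$, so the pathological case of level $1$ is excluded. (If $p \equiv 1 \pmod{4}$, no such $S$ exists and the corollary is vacuous.)

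Second, writing $\alpha = 2i + 2j + 1$, I put $S$ in the normal form $S(i,j)$ introduced before Proposition \ref{PropA-3}. That proposition then yields
\[
\omega(\theta_S^{(1)}) \geq p^{i+j} \cdot \frac{p+1}{2}.
\]
Comparing with the upper bound $\omega(\theta_S^{(1)}) \leq (p+1)/2$ forces $i = j = 0$, hence $\alpha = 1$ and $\det(2S) = p$. From the explicit normal form $S(0,0) = \smat{a}{bp}{bp}{dp}$ with $adp - b^2 p^2 = p$, a direct computation of $(2S)^{-1}$ gives ${\rm level}(S) = p$.

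The main obstacle is the parity step: it is precisely what lets us apply Proposition \ref{PropA-3} (as stated only for odd $\alpha$) without having to develop a parallel analog for an even-determinant normal form. Once the parity is settled, the remainder is an immediate two-line synthesis of Propositions \ref{PropA-2} and \ref{PropA-3}.
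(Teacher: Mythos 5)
Your proof is correct and follows the paper's own route: Proposition \ref{PropA-2} reduces to level a power of $p$, the normal form $S(i,j)$ together with Proposition \ref{PropA-3} gives the lower bound $p^{i+j}\cdot\frac{p+1}{2}$ on the filtration, and the hypothesis gives the upper bound $\frac{p+1}{2}$, forcing $i=j=0$. The only point where you diverge is the justification that the power of $p$ in $\det(2S)$ is odd: the paper argues via the nebentypus, namely that $(\chi_S)_p$ must be nontrivial because otherwise Proposition \ref{weightcongruence} would force $\frac{p+1}{2}\equiv 1 \pmod{p-1}$, which is impossible; you instead use the elementary fact that $\det(2S)=4ac-b^2\equiv 0$ or $3\pmod 4$ for an integral binary form, so an odd value $p^{\alpha}$ must satisfy $p^{\alpha}\equiv 3\pmod 4$, making $\alpha$ odd (and showing the statement is vacuous for $p\equiv 1\pmod 4$, a remark the paper omits). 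Both arguments are valid; yours is more elementary and self-contained, while the paper's character-theoretic version is the one that would survive in higher rank where no such congruence constraint on the discriminant is available.
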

\begin{proof}
The assumption  $\theta _S^{(1)}\equiv \phi $ mod $p$ implies 
\begin{align}
\label{eq:filt_S}
\omega (S)=\omega (S(i,j))\le \frac{p+1}{2}.
\end{align}

On the other hand, the level of $S$ is a power of $p$ because of  Proposition \ref{PropA-2}. 
Then $\det (2S)$ is a power of $p$. 
Note here that, actually in this case, the power of $p$ is odd. 
Because, the quadratic character  $\chi _S$ should have nontrivial $p$-component, otherwise $\frac{p+1}{2}\equiv 1$ mod $p-1$ should hold and this is impossible. 
Hence we can assume that $S=S(i,j)$ as in Corollary \ref{PropA-3}. 
Then we have 
\[\omega (S)=\omega (S(i,j))\ge p^{i+j}\cdot \frac{p+1}{2}. \] 
Combing this with (\ref{eq:filt_S}), we have $i=j=0$. Namely, we obtain ${\rm level}(S)=p$. 
\end{proof}

We can now prove our result for the case of $p$-rank $2$. 
\begin{proof}[Proof of Theorem \ref{Thm:r=2}]
Let $\{T_1,\cdots,T_{h_p}\}\subset \Lambda _{2}/{\rm GL}_2(\Z)$ be a set of the representatives of ${\rm GL}_{2}(\Z)$-inequivalence classes 
of binary quadratic forms with level $p$. 
Note that $\theta ^{(n)}_{T_i}\in M_{1}(\Gamma ^{(n)}_0(p),(\frac{*}{p}))$, and there exists $G_i\in M_{\frac{p+1}{2}}(\Gamma _n)$ such that  $G_i \equiv \theta ^{(n)}_{T_i}$ mod $p$ by Theorem \ref{Thm:Bo-Na}.  
Then $G_i$ is mod $p$ singular, because $\theta ^{(n)}_{T_i}$ is true singular. 
Then we consider 
\begin{align*}
H:=F-\sum _{i=1}^{h_p} \frac{1}{2}a_F\begin{pmatrix}0 & 0 \\ 0 & T_i\end{pmatrix}G_i\in M_{\frac{p+1}{2}}(\Gamma _{n+2})
\end{align*}
This $H$ is a mod $p$ singular of some $p$-rank $r'\le 2$. 

Now we suppose that still $r'=2$. 
Then we can take $S\in \Lambda _2^+$ with $a_H\smat{0}{0}{0}{S}\not \equiv 0$ mod $p$ such that $\det S$ is minimal. 
By Corollary \ref{Cor:filt}, we have $\widetilde{\theta_S^{(1)}} \in \widetilde{M}_{\frac{p+1}{2}}(\Gamma _1)$. 
By Corollary \ref{Cor1}, we have ${\rm level}(S)=p$ and hence $S\sim T_j$ mod ${\rm GL}_2(\Z)$ for some $j$. 
However, from $a_H\smat{0}{0}{0}{T_i}\equiv 0$ mod $p$ for any $i$, we have $a_H\smat{0}{0}{0}{S}\equiv 0$ mod $p$. 
This is a contradiction. 

Therefore we have $r'\le 1$ or $H\equiv 0$ mod $p$.   
If $H\not \equiv 0$ mod $p$, then we have $p+1-r'\equiv 0$ mod $p-1$ because of Theorem \ref{Thm:Bo-Ki}. 
This is impossible. This implies $H\equiv 0$ mod $p$. 
This shows the main statement of Theorem \ref{Thm:r=2}.

To prove the final statement concerning $\Phi^{n+1}(F)\not \equiv 0$ mod $p$ it is sufficient to
assure the mod $p$ linear independence of the binary theta series in question: 
Let $\{T_1,\cdots,T_{h_p}\}$ be as above. 
As stated in Kani \cite{Kani}, $\theta _{T_j}^{(1)}$ ($j=1, \cdots, h_p$) are linearly independent over $\C$. 
Now we can prove that $\theta _{T_j}^{(1)}$ ($j=1, \cdots, h_p$) are linearly independent over $\F_p$:  
Let $\sum _{i=1}^{h_p} c_i \theta _{T_j}^{(1)}\equiv 0 \bmod{p}$. 
Then, for any $n\ge 0$, we have 
\[\sum _{i=1}^{h_p} c_i A(T_i,n)\equiv 0 \bmod{p}.\] 
For each $T_i$, we can find infinitely many primes $l$ such that $A(T_i,l)>0$. 
Then we have $A(T_j,l)=0$ for any $j\neq i$. Since $A(T_i,l)=2,4,6\not \equiv 0$ mod $p$, we have $c_j \equiv 0$ mod $p$. 
Hence we have $c_i\equiv 0$ mod $p$. This shows $c_i\equiv 0$ mod $p$ for any $i$ with $1\le i\le h_p$. 
Therefore $\theta _{T_j}^{(1)}$ ($j=1$, $\cdots$, $h_p$) are linearly independent over $\F_p$. 
This completes the proof of Theorem \ref{Thm:r=2}. 
\end{proof}

\section{From mod $p$ to mod $p^m$} 
\label{Sec:7}
In this section, using induction, we extend Theorem \ref{Thm:general} (proved for $m=1$ in the previous sections) to the case of general $m$.
The main reason why we preferred to give a mod $p$ version first and then
extend to arbitrary powers is the technical difficulty which one encounters,
when one tries to give a mod $p^m$ analogue of our abstract Sturm bound.
We avoid such a  problem in this way. 

We start with proving the following weaker version of  Theorem \ref{Thm:general}.
\begin{Thm}[Weaker version of Theorem \ref{Thm:general}]
\label{Thm:weak}
Let $n$, $k$, $N$ be positive integers, $r$ an even integer with $n\ge r$. 
Let $p$ be a prime with $p>r+1$ and $\chi $ a quadratic
Dirichlet character mod $N$ with $\chi (-1)=(-1)^k$.  
Suppose that $F\in M_{k}(\Gamma _0^{(n+r)}(N),\chi )_{\Z_{(p)}}$ is
mod $p^m$ singular of $p$-rank $r$. 
Then there are finitely many $S\in \Lambda _r^+$ of level dividing $p^eN$ with
$\widetilde{\theta _{S}^{(n)}}\in \widetilde{M}_k(\Gamma ^{(n)}_0(N),\chi )$ 
(only mod $p$) such that 
\begin{align} 
\label{xy} 
F\equiv \sum_S c_S\theta_S^{(n+r)} \bmod{p^m}
  \quad (c_S\in \Z_{(p)}).
\end{align}
Moreover, all $S$ involved satisfy $\chi='\chi_S$. 
\end{Thm}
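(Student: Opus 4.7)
Plan:

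The plan is induction on $m$, with base case $m=1$ being Theorem \ref{Thm:general} (1) and (2) for $m=1$, already established in Sections \ref{Sec:5}--\ref{Sec:6}.

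For the step from $m-1$ to $m$, given $F$ mod $p^m$ singular of $p$-rank $r$ at level $N$, note that $F$ is a fortiori mod $p^{m-1}$ singular of $p$-rank $r$. Applying the induction hypothesis yields finitely many $S_i\in\Lambda_r^+$ of level dividing $p^{e_1}N$ with $\widetilde{\theta^{(n)}_{S_i}}\in \widetilde{M}_k(\Gamma_0^{(n)}(N),\chi)$, $\chi='\chi_{S_i}$, and $c_i\in\Z_{(p)}$ such that
\[F \equiv \sum_i c_i\,\theta^{(n+r)}_{S_i} \bmod{p^{m-1}}.\]
To boost this congruence from mod $p^{m-1}$ to mod $p^m$, I would replace each $\theta^{(n+r)}_{S_i}$ (a form of weight $r/2$) by a weight-$k$ modular form $\widetilde{\theta}_{S_i}\in M_k(\Gamma_0^{(n+r)}(p^{e_2}N),\chi)_{\Z_{(p)}}$ with $\widetilde{\theta}_{S_i}\equiv \theta^{(n+r)}_{S_i}\bmod{p^m}$. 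Concretely, take $\widetilde{\theta}_{S_i}:=\eta_i\cdot \theta^{(n+r)}_{S_i}$ for a modular form $\eta_i$ of weight $k-r/2$, character $\chi\chi_{S_i}^{-1}$, congruent to $1$ mod $p^m$: the divisibility $2(k-r/2)\equiv 0 \bmod (p-1)p^{m-1}$ (Theorem \ref{Thm:Bo-Ki}) together with $\chi='\chi_{S_i}$ supplies the needed weight and parity, and $\eta_i$ is assembled from a $p^{m-1}$-th power of $E_{p-1}$ (weight $(p-1)p^{m-1}$, trivial character) together with, when the parity demands it, a $p^{m-1}$-th power of the weight-$(p-1)/2$ Eisenstein series with Legendre character; the identity $(1+p\alpha)^{p^{m-1}}\equiv 1\bmod{p^m}$ then delivers the required congruence.

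With the lifts in hand, set $H:=\bigl(F-\sum_i c_i\widetilde{\theta}_{S_i}\bigr)/p^{m-1}\in M_k(\Gamma_0^{(n+r)}(p^{e_2}N),\chi)_{\Z_{(p)}}$; it is $p$-integral by construction, and its rank-$>r$ Fourier coefficients are $\equiv 0 \bmod p$, since those of $F$ vanish mod $p^m$ and those of $\widetilde{\theta}_{S_i}$ agree with the (vanishing) rank-$>r$ coefficients of $\theta^{(n+r)}_{S_i}$ mod $p^m$. Hence either $H\equiv 0 \bmod p$ (in which case we are already done), or $H$ is mod $p$ singular of some $p$-rank $r'\leq r$; then Theorem \ref{Thm:Bo-Ki} gives $2k-r'\equiv 0\bmod(p-1)$, which combined with $2k-r\equiv 0\bmod(p-1)$ and $0\leq r-r'\leq r<p-1$ (from $p>r+1$) forces $r'=r$. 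Applying the $m=1$ case to $H$ yields finitely many $T_j\in\Lambda_r^+$ and $d_j\in\Z_{(p)}$ with $H\equiv \sum_j d_j\,\theta^{(n+r)}_{T_j}\bmod p$, $\chi='\chi_{T_j}$, and substituting back gives
\[F \equiv \sum_i c_i\,\theta^{(n+r)}_{S_i} + p^{m-1}\sum_j d_j\,\theta^{(n+r)}_{T_j} \bmod{p^m},\]
as required.

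The main obstacle is the weight-raising construction: producing, for each $i$, a modular form $\eta_i$ of exact weight $k-r/2$, character $\chi\chi_{S_i}^{-1}$, and $\equiv 1 \bmod p^m$ in degree $n+r$. While the building blocks (Siegel Eisenstein series of weight $p-1$, and of weight $(p-1)/2$ with Legendre character) are standard, matching weight, nebentypus, and $p$-adic congruence simultaneously requires careful bookkeeping through Proposition \ref{weightcongruence} and Definition \ref{Eq_Char}; the case $\chi\neq \chi_{S_i}$ is the most delicate. A secondary issue is the filtration condition $\widetilde{\theta^{(n)}_{T_j}}\in \widetilde{M}_k(\Gamma_0^{(n)}(N),\chi)$: since $H$ has level $p^{e_2}N$, a naive application of the base case at that level produces filtration at level $p^{e_2}N$, not $N$. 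To keep the filtration at level $N$ one must apply Proposition \ref{Prop:filt} to $F$ itself at level $N$, so that the relevant $\theta^{(n)}_{T_j}$—indexed by primitive Fourier coefficients $a^*(T_j)$ of $F$ that are $p$-units—automatically acquire filtration at level $N$ mod $p$.
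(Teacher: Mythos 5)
Your proposal is correct and follows essentially the same route as the paper: induction on $m$, multiplying the theta series from the mod $p^{m-1}$ step by a weight-raising form congruent to $1$ mod $p^m$ (the paper uses a single power ${\mathcal E}^{tp^{m-1}}$ of the weight-$\frac{p-1}{2}$ Eisenstein series with Legendre nebentypus, which handles weight and character simultaneously, rather than your two-factor assembly), forming $H=(F-G)/p^{m-1}$, pinning down $r'=r$ via Theorem \ref{Thm:Bo-Ki}, and applying the $m=1$ case to $H$. Your two flagged concerns are resolved exactly as you suggest: the nebentypus bookkeeping is absorbed by $\chi='\chi_S$ and the oddness of $p^{m-1}$, and the level-$N$ filtration of the $\theta^{(n)}_S$ is obtained by applying Proposition \ref{Prop:filt} to $F$ itself, which is precisely what the paper does when completing the proof of Theorem \ref{Thm:general}.
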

\begin{proof}[Proof of Theorem \ref{Thm:weak}]
We prove the statement by induction on $m$.
We have already proved the statement for $m=1$.  
Suppose that the statements (1), (2) in Theorem \ref{Thm:general} are true for any $m$ with $m<m_0$. 
We consider the case $m=m_0$. 
Note that $2k-r\equiv 0$ mod $(p-1)p^{m_0-1}$ by Theorem \ref{Thm:Bo-Ki}. 
Therefore we can write $k=\frac{r}{2}+t \cdot \frac{p-1}{2}p^{m_0-1}$. 

By the assumption, $F$ is mod $p^{m_0-1}$ singular (because of mod $p^{m_0}$ singular) and therefore by induction hypothesis, 
we have 
\[F\equiv \sum _{\substack{S\in \Lambda _r^+\\ {\text{level}(S)}\mid p^{e_1}N}}c_S\theta _S^{(n+r)} \bmod{p^{m_0-1}}\quad \text{with}\quad c_S\in \Z_{(p)}, \]
for some $e_1$, 
and all $S$ involved satisfy $\chi =' \chi_S$.  
 
We take ${\mathcal E}\in M_{\frac{p-1}{2}}(\Gamma _0^{(n+r)}(p),(\frac{*}{p}))_{\Z_{(p)}}$ from \cite{Bo-Na:2007} such that ${\mathcal E}\equiv 1$ mod $p$ and  
put $G:=\sum _{S}c_S\theta _S^{(n+r)}\cdot {\mathcal E}^{tp^{m_0-1}}$.  
Note here that $F$, $G\in M_{k}(\Gamma _0^{(n+r)}(p^{e_1}N),\chi )_{\Z_{(p)}}$ 
because of $\chi ='\chi _S$. 
Consider $H:=\frac{1}{p^{m_0-1}}(F-G)\in M_{k}(\Gamma _0^{(n+r)}(p^{e_1}N),\chi )_{\Z_{(p)}}$.   
Then for any $T$, we have 
\[a_F(T)=a_G(T)+p^{m_0-1}a_H(T). \]
Since $a_F(T)\equiv a_G(T)\equiv 0$ mod $p^{m_0}$ for any $T$ with ${\rm rank}(T)>r$, we have $a_H(T)\equiv 0$ mod $p$ for any $T$ with ${\rm rank}(T)>r$. 
This means that $H$ is mod $p$ singular of some $p$-rank $r'\le r$. 
By $2k-r'\equiv 2k-r \equiv 0$ mod $p-1$ and $0\le r'\le r \le p-1$, we have $r'=r$. 
Therefore $a_H(T)\not \equiv 0$ mod $p$ for some $T$ with ${\rm rank}(T)=r$. 
Again by our theorem for $m=1$, we have 
\[H\equiv \sum _{\substack{R\in \Lambda _r^+\\ {\text{level}(R)}\mid p^{e_2}N}}d_R\theta _R^{(n+r)} \bmod{p}\quad \text{with}\quad d_R\in \Z_{(p)}, \] 
and all $R$ involved satisfy $\chi =' \chi_R$. 
Noting $G:=\sum _{S}c_S\theta _S^{(n+r)}\cdot {\mathcal E}^{tp^{m_0-1}}\equiv \sum _{S}c_S\theta _S^{(n+r)}$ mod $p^{m_0}$, we have
\begin{align*}
F&\equiv G+p^{m_0-1}H\\
&\equiv \sum _{\substack{S\in \Lambda _r^+\\ {\text{level}(S)}\mid p^{e_1}N}}
c_S\theta _S^{(n+r)}+p^{m_0-1}\sum _{\substack{R\in \Lambda _r^+\\ {\text{level}(R)}\mid p^{e_2}N}}d_R\theta _R^{(n+r)}  \bmod{p^{m_0}}. 
\end{align*} 
This completes the proof of Theorem \ref{Thm:weak}.  
\end{proof}

\begin{proof}[Completion of the proof of Theorem \ref{Thm:general}]
  We first claim that the coefficients $c_S$ in (\ref{xy})
  coincide mod $p^m$ with $\frac{a^*(S)}{\epsilon(S)}$ provided that the
  summation in (\ref{xy}) is restricted to pairwise ${\rm GL}_r({\mathbb Z})$-inequivalent elements of  $\Lambda^+_r$.
  To see this, we take an arbitrary $S_0\in \Lambda_r^+$.
  Then (using freely the notaion from Section \ref{Sec:4})
  $$a(S_0)\equiv \sum_S c_S A(S,S_0)\bmod p^m$$
  holds and passing to primitive Fourier coefficients gives
  $$a^*(S_0)\equiv \sum_S c_S A^*(S,S_0)\bmod p^m.$$
  Now $A^*(S,S_0)=\epsilon(S_0)$ if $S$ and $S_0$ are ${\rm GL}_r(\Z)$-equivalent and zero otherwise; this proves the claim from above.

 To complete the proof of Theorem \ref{Thm:general},
 we have to prove
 $\widetilde{\theta _{S}^{(n)}}\in \widetilde{M}_k
 (\Gamma ^{(n)}_0(N),\chi )_{p^{m-\nu}}$; this follows now directly
 from Proposition \ref{Prop:filt}.
This completes the proof of Theorem \ref{Thm:general}.
\end{proof}

\section{Appendix A: Modified $q$-expansion principle}
\label{Sec:8}
To specify the level of theta series in our theorems, we need some control over the behaviour of
congruences of modular forms, when we switch to other cusps.
The ``$q$-expansion principles" available in the literature do not exactly provide the information
necessary for our purpose.

We fix a prime $p$ and  a natural number $N$ coprime to $p$.
For $m\geq 0$ let
$R_m$ be a subring of ${\mathbb C}$ containing 
${\mathbb Z}[
e^{\frac{2\pi i}{N\cdot p^m}}, 
e^{\frac{2\pi i}{3}},
\frac{1}{6},\frac{1}{N}]
$. 

\begin{Thm}
\label{Thm:q-exp}
Let $p$ be a prime with $p\ge n+3$ and $N$ a positive integer with $p\nmid N$. 
Let $f\in M_k(\Gamma _0^{(n)}(p^m)\cap \Gamma^{(n)}(N))$. 
Then  for all $\gamma \in \Gamma _0^{(n)}(p^m)$ we have
$$f\in    M_k(\Gamma _0^{(n)}(p^m)\cap \Gamma^{(n)}(N))_{R_m} \iff
f|_k \gamma\in  M_k(\Gamma _0^{(n)}(p^m)\cap \Gamma^{(n)}(N))_{R_m}.$$
\end{Thm}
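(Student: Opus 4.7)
The plan is to reduce the biconditional to a check on generators. Since $\Gamma := \Gamma_0^{(n)}(p^m) \cap \Gamma^{(n)}(N)$ is normal in $\Delta := \Gamma_0^{(n)}(p^m)$ (because $\Gamma^{(n)}(N)$ is normal in $\Gamma_n$), the slash action $f \mapsto f|_k\gamma$ depends only on the coset $\gamma\Gamma \in \Delta/\Gamma$. By strong approximation (using $\gcd(p,N)=1$), the reduction-mod-$N$ map induces an isomorphism $\Delta/\Gamma \cong \mathrm{Sp}_n(\mathbb{Z}/N\mathbb{Z})$: any lift $\tilde\gamma \in \Gamma_n$ of a given element of $\mathrm{Sp}_n(\mathbb Z/N)$ can be adjusted by an element of $\Gamma^{(n)}(N)$ congruent to $I$ modulo $p^m$ so as to land in $\Delta$. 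Hence it will suffice to verify the claim for $\gamma$ ranging over lifts in $\Delta$ of a generating set of $\mathrm{Sp}_n(\mathbb Z/N\mathbb Z)$; the backward implication follows by applying the forward one to $f|_k\gamma$ with $\gamma^{-1} \in \Delta$.

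First I would handle the two easy families. For a symmetric translation $\gamma = \smat{1_n}{B}{0}{1_n}$ with $B \in \mathrm{Sym}_n(\mathbb{Z})$, the identity $(f|_k\gamma)(Z) = f(Z+B)$ yields the new Fourier coefficient $a_f(T)\mathbf{e}(\mathrm{tr}(TB)) \in R_m$, because $\mathrm{tr}(TB) \in \tfrac{1}{N}\mathbb{Z}$ for $T \in \tfrac{1}{N}\Lambda_n$ and $\zeta_N \in R_m$. For a block-diagonal $\gamma = \smat{U}{0}{0}{{}^tU^{-1}}$ with $U \in \mathrm{GL}_n(\mathbb{Z})$, the Fourier coefficients transform into $(\det U)^k a_f(T[U^{-1}]) \in R_m$.

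The hard part will be the third generator: a lift $\Omega \in \Delta$ of $J_n \bmod N$, which exists by the Chinese remainder theorem (for instance with scalar blocks $a\, 1_n, b\, 1_n, c\, 1_n, d\, 1_n$ satisfying $c \equiv -1 \pmod N$, $c \equiv 0 \pmod{p^m}$ and $ad-bc = 1$). For this $\Omega$, the Fourier expansion of $f|_k\Omega$ encodes the expansion of $f$ at the cusp $\Omega^{-1}\infty$, and is not accessible by elementary manipulation of the expansion at $\infty$. I would obtain the $R_m$-rationality via the algebraic $q$-expansion principle (in the style of Chai-Faltings) for the principal congruence subgroup $\Gamma^{(n)}(Np^m) \subset \Gamma$: under the hypothesis $p \ge n+3$, the moduli stack of principally polarized abelian varieties with $\Gamma^{(n)}(Np^m)$-level structure admits a smooth toroidal compactification over $R_m \supset \mathbb{Z}[\zeta_{Np^m}, 1/6, 1/N]$, and $f$ corresponds to an $R_m$-rational section of the Hodge line bundle, whose Fourier expansion at every cusp (in particular at $\Omega^{-1}\infty$) has coefficients in $R_m$.
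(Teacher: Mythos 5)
Your structural reduction is fine: $\Gamma:=\Gamma_0^{(n)}(p^m)\cap\Gamma^{(n)}(N)$ is normal in $\Delta:=\Gamma_0^{(n)}(p^m)$, the quotient is $\mathrm{Sp}_n(\Z/N\Z)$, the translation and $\mathrm{GL}_n$-block generators are handled correctly, and deducing the backward implication from the forward one is exactly what the paper does. But the entire content of the theorem is concentrated in your last step, and the input you invoke there does not exist. The ring $R_m$ contains $\zeta_{Np^m}$ but \emph{not} $1/p$, and a full $\Gamma^{(n)}(Np^m)$-level structure cannot be imposed smoothly over a base in which $p$ is non-invertible: the $p^m$-torsion of an abelian variety is not \'etale in characteristic $p$, so there is no smooth toroidal compactification of that moduli problem over $R_m$, and the available algebraic $q$-expansion principles (Katz, Ichikawa, Pitale et al.) all require the level to be invertible in the base ring. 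This is precisely the obstruction the paper is pointing at when it says the $q$-expansion principles in the literature ``do not exactly provide the information necessary.'' Also, the hypothesis $p\ge n+3$ has nothing to do with compactifications; in the paper it guarantees the existence of a form $\mathcal{E}\in M_l(\Gamma_0^{(n)}(p))_{\Z_{(p)}}$ with $\mathcal{E}\equiv 1\bmod p$ and $\mathcal{E}|\omega_j\equiv 0\bmod p$ for representatives $\omega_j$ outside $\Gamma_0^{(n)}(p)$.

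The paper closes this gap by a trace argument that reduces to the prime-to-$p$ level case, where the ordinary principle does apply. For $m=1$ one forms
$F=f\mathcal{E}^\alpha+\sum_j (f\mathcal{E}^\alpha)|\delta_j$, the trace from $\Gamma_0^{(n)}(p)\cap\Gamma^{(n)}(N)$ down to $\Gamma^{(n)}(N)$; for large $\alpha$ the extra terms are $\equiv 0\bmod p$ even after slashing by $\gamma\in\Gamma_0^{(n)}(p)$, so $F\equiv f$ and $F|\gamma\equiv f|\gamma\bmod p$, and the usual $q$-expansion principle applied to $F$ (level $N$, prime to $p$) transfers the $p$-integrality to $f|\gamma$. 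For $m\ge 2$ one inducts using $\mathcal{F}=\mathcal{E}(p^{m-1}z)$ and the trace down to level $\Gamma_0^{(n)}(p^{m-1})\cap\Gamma^{(n)}(N)$. Note that this argument treats an arbitrary $\gamma\in\Gamma_0^{(n)}(p^m)$ directly, so the generator reduction you set up becomes unnecessary. Without the trace device (or some genuinely new integral-model input at $\Gamma_0(p^m)$-level), your treatment of the matrix $\Omega$ does not go through.
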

The case $m=0$ is the statement of the usual $q$-expansion principle in the formulation of
Pitale et al. \cite{PiScSa} proved by Katz \cite{Kat} for $n=1$ and Ichikawa \cite{Ich} for $n>1$. 
The main point in our version is that $f$ and $f| \gamma$ share the same $p$-integrality property.
Note that one only needs to prove one direction of the theorem.
The aim of Appendix A is to prove this property.
\subsection{Proof for $m=1$}
We will do induction on $m$.
We need the existence of a modular form ${\mathcal E}\in M_{l}(\Gamma _0^{(n)}(p))_{\Z_{(p)}}$ such that 
\begin{align*}
&{\mathcal E}\equiv 1 \bmod{p}\\
&{\mathcal E}| \omega _j \equiv 0 \bmod{p} \quad (1\le j \le n ). 
\end{align*}
Here $\omega _j$ can be any element $\smat{*}{*}{C}{D}\in \Gamma _n$ with $C$ being of rank $i$ in $M_{n}(\F_p)$.
The existence of such ${\mathcal E}$ was explained in \cite{Bo2017}, under the condition $p\ge n+3$. 
We mention that the congruence condition of ${\mathcal E}$ can be rephrased as saying that for any $\gamma \in \Gamma _n$, we have 
\[{\mathcal E}| \gamma \equiv 1 \bmod{p} \quad \iff \quad \gamma \in \Gamma _0^{(n)}(p)\]
and ${\mathcal E}| \gamma \equiv 0$ mod $p$ for $\gamma \not \in \Gamma _0^{(n)}(p)$. 

We prove the statement for $m=1$. 
We chose an appropriate power $\alpha $ of ${\mathcal E}$ and consider a trace from level $\Gamma _0^{(n)}(p)\cap \Gamma ^{(n)}(N)$ to level $\Gamma ^{(n)}(N)$:
\[F=f{\mathcal E}^\alpha +\sum _{j\bmod{p}}(f{\mathcal E}^{\alpha })|\delta _j\]
where $\delta _j$ are representatives of the left cosets not in $\Gamma _0^{(n)}(p)\cap \Gamma ^{(n)}(N)$,
in particular, ${\mathcal E}|\delta _j\equiv 0$ mod $p$. 
Applying $\gamma $, we see that $\delta _j \cdot \gamma  \not \in \Gamma _0^{(n)}(p)$ and hence $(f {\mathcal E}^\alpha )|\delta _j \cdot \gamma \equiv 0$ mod $p$. 
We observe that $F$ is of level $\Gamma ^{(n)}(N)$ and hence we may apply the ordinary $q$-expansion principle: 
Then we have 
\[F|\gamma \equiv (f|\gamma ) {\mathcal E}^\alpha \equiv f|\gamma \bmod{p}. \]
In particular $F|\gamma $ is $p$-integral and hence $f|\gamma $ is also $p$-integral. 
This completes the proof for $m=1$. 
\qed

\subsection{Proof for $m\ge 2$}
We prove it for $m\ge 2$ by induction on the power $m$.
We suppose that the statement is true for the level $\Gamma _0^{(n)}(p^{m-1})$. 
We put 
\[{\mathcal F}:={\mathcal E}(p^{m-1}z)=p^{-(m-1)\frac{nl}{2}}{\mathcal E}| \begin{pmatrix}p^{m-1} \cdot 1_n & 0_n \\ 0_n & 1_n\end{pmatrix} \in M_{l}(\Gamma _0^{(n)}(p^{m}))_{\Z_{(p)}}. \]
Here  $l$ is the weight of ${\mathcal E}$. 
Taking some positive integer $\alpha $, we consider $f{\mathcal F}^\alpha $ and the trace of this from $\Gamma _0^{(n)}(p^m)\cap \Gamma ^{(n)}(N)$ to $\Gamma _0^{(n)}(p^{m-1})\cap \Gamma ^{(n)}(N)$; 
\begin{align}
\label{Eq:tr}
F={\rm tr}(f{\mathcal F}^\alpha )&=f{\mathcal F}^\alpha +\sum _{w}^{p-1}(f{\mathcal F}^\alpha )|\gamma _w \\ \nonumber
&=f{\mathcal F}^\alpha +\sum _{j=1}^{p-1}(f|\gamma _w)({\mathcal F}|\gamma _w)^\alpha , 
\end{align}
where $\gamma _w=\smat{1_n}{0_n}{p^{m-1}N_w}{1_n}$ and $w$ runs over a complete set of representatives of 
integral symmetric matrices of size $n$ mod $p$ except the trivial coset $p\cdot {\rm Sym}_n(\Z)$. 
Now we prove that $(f|\gamma _w )({\mathcal F}|\gamma _w)^\alpha \equiv 0$ mod $p$. 
By a direct calculation, we have 
\begin{align}
\label{Eq:mt}
\begin{pmatrix}
p^{m-1}\cdot 1_n & 0_n \\ 0_n & 1_n
\end{pmatrix}
\begin{pmatrix}
1_n & 0_n \\ p^{m-1}Nw  & 1_n
\end{pmatrix}
&=\begin{pmatrix}
1_n & 0_n \\ Nw & 1_n
\end{pmatrix}
\begin{pmatrix}
p^{m-1}\cdot 1_n & 0_n \\ 0_n  & 1_n
\end{pmatrix}.
\end{align} 
This implies 
\begin{align*}
{\mathcal F}| 
\begin{pmatrix}
1_n & 0_n \\ p^{m-1}Nw  & 1_n
\end{pmatrix}
&=p^{-(m-1)\frac{nl}{2}}
{\mathcal E}|\begin{pmatrix}
p^{m-1}\cdot 1_n & 0_n \\ 0_n & 1_n
\end{pmatrix}
\begin{pmatrix}
1_n & 0_n \\ p^{m-1}Nw  & 1_n
\end{pmatrix}\\
&=p^{-(m-1)\frac{nl}{2}}{\mathcal E}|\begin{pmatrix}
1_n & 0_n \\ Nw & 1_n
\end{pmatrix}
\begin{pmatrix}
p^{m-1}\cdot 1_n & 0_n \\ 0_n  & 1_n
\end{pmatrix}.
\end{align*}
Since ${\mathcal E}|\smat{1_n}{0_n}{Nw}{1_n}\equiv 0$ mod $p$ and we can put ${\mathcal E}|\smat{1_n}{0_n}{Nw}{1_n}=pX$ for some $p$-integral modular form $X=X_w$. 
Then we have 
\begin{align*}
{\mathcal F}| 
\begin{pmatrix}
1_n & 0_n \\ p^{m-1}Nw  & 1_n
\end{pmatrix}&=p^{-(m-1)\frac{nl}{2}}{\mathcal E}
|\begin{pmatrix}
1_n & 0_n \\ Nw & 1_n
\end{pmatrix}
\begin{pmatrix}
p^{m-1}\cdot 1_n & 0_n \\ 0_n  & 1_n
\end{pmatrix}\\
&=p^{-(m-1)\frac{nl}{2}} (p X) |
\begin{pmatrix}
p^{m-1}\cdot 1_n & 0_n \\ 0_n  & 1_n
\end{pmatrix}\\
&=pX(p^{m-1}z)\equiv 0 \bmod{p}.
\end{align*}
Therefore $(f|\gamma _w )({\mathcal F}|\gamma _w)^\alpha \equiv 0$ mod $p$ follows if $\alpha $ is large. 
By the assumption that $f$ is $p$-integral, $F$ is also $p$-integral. 

Now let $\delta \in \Gamma _0^{(n)}(p^m)$, we may apply induction because  $F$ is of level $\Gamma _0^{(n)}(p^{m-1}N)$.  
We start form (\ref{Eq:tr}) for $F$ and apply $\delta $ to it:
If $f$ is $p$-integral, then $F$ is also $p$-integral, and then $F|\delta $ is also $p$-integral. 
By (\ref{Eq:tr}), we have 
\begin{align*}
F|\delta =(f|\delta ){\mathcal F}^\alpha +\sum _{j=1}^{p-1}(f{\mathcal F}^\alpha )|\gamma _w \delta 
\end{align*} 
The first term on the right hand side of (\ref{Eq:tr}) is congruent mod $p$ to $f|\delta $. 
We prove $(f{\mathcal F}^{\alpha })|\gamma _w\delta \equiv 0$ mod $p$. 
To prove this, we must multiply (\ref{Eq:mt}) from the right by $\delta =\smat{a}{b}{p^mc}{d}$. 
We get 
\[
\begin{pmatrix}
p^{m-1}\cdot 1_n & 0_n \\ 0_n & 1_n 
\end{pmatrix}
\gamma _w \delta 
=\begin{pmatrix}
a & p^{m-1}b \\ wNa+pc & wNp^{m-1}b+d
\end{pmatrix}
\begin{pmatrix}
p^{m-1}\cdot 1_n & 0_n \\ 0_n & 1_n 
\end{pmatrix}. 
\] 
Since $p\nmid wNa$ and by a similar argument as above, we obtain ${\mathcal F}|\gamma _w \delta \equiv  0$ mod $p$. 
This shows $(f{\mathcal F}^{\alpha })|\gamma _w\delta \equiv 0$ mod $p$ for large $\alpha $.
This implies that $F|\delta \equiv f|\delta $ mod $p$ and hence $f|\delta $ is $p$-integral. 
\qed

\begin{Rem} 
A minor variant of the proof above applies if $f$ is of  ``quadratic nebentypus mod $p$",
i.e. $f$ satisfies (under the same conditions as in the theorem above)
$f|_k \gamma=(\frac{\det(d)}{p}) \cdot f$ for  
all $\gamma\in \Gamma_0^{(n)}(p^m)\cap\Gamma^{(n)}(N)$
with the character $(\frac{*}{p})$.
The proof above needs to be modified only for the step ``$m=1$", where one has to
use a function ${\mathcal E}$ with nebentypus $(\frac{*}{p})$.
\end{Rem}
\section{Appendix B: Kitaoka's formula}
\label{Sec:9} 
In this Appendix, we generalize Kitaoka's theorem to higher degree to specify the level of theta series in our theorem. 
We then apply this in practice and discuss levels.
\subsection{Generalization of Kitaoka's formula}
We freely switch here between the language of (positive integral) quadratic forms $S$ (or integral symmetric matrices) and the language of lattices $L$ in an euclidean space $({\mathbb R}^m,\left<\ ,\ \right>)$. 
The aim is to get a degree $n$ version of a result of Kitaoka; 
we state it here only in the version sufficient for our application and allowing a rather short proof. 
A more sophisticated version generalizing the computation of Kitaoka would lead to a more general statement. 
\begin{Thm}
\label{Thm:Kita}
Let $L$ be an even integral positive definite lattice of rank $m=2k$ and level $N$. 
Let $d$ be a positive divisor of $N$ with $(d,\frac{N}{d})=1$, and $c:=\frac{N}{d}$. 
Choose integers $a$, $b$ such that $M=\smat{a}{b}{c}{d}\in {\rm SL}_2(\Z)$ and denote by $\frak{M}$ the $n$-fold 
diagonal embedding of $M$ into ${\rm Sp}_n(\Z)$, namely, 
$\frak{M}:=\smat{a\cdot 1_n}{b\cdot 1_n}{c\cdot 1_n}{d\cdot 1_n}$.

Then, with a suitable constant $\kappa $ 
\begin{align}
\label{Eq:Kita}
\theta _L^{(n)}| _k\frak{M}=\kappa \cdot \theta _{L'}^{(n)}, 
\end{align}
where $L'$ is a lattice characterized by 
\[L'\otimes \Z_p =
\begin{cases}
L\otimes \Z_p\ &\text{if}\ p\nmid d, \\
(L\otimes \Z_p)^*\ &\text{if}\ p\mid d, 
\end{cases}\]
and $*$ denotes the dual. 
\end{Thm}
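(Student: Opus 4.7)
The plan is to compute $\theta_L^{(n)}|_k \mathfrak{M}$ by direct application of Poisson summation, exploiting the scalar structure of the blocks of $\mathfrak{M}$. The starting point is the identity
\[
(aZ+b\cdot 1_n)(cZ+d\cdot 1_n)^{-1} = \frac{a}{c}\cdot 1_n - \frac{1}{c^2}\bigl(Z + \tfrac{d}{c}\cdot 1_n\bigr)^{-1},
\]
which rewrites the action of $\mathfrak{M}$ on $\h_n$ as an affine transformation of the variable $(Z+\tfrac{d}{c}\cdot 1_n)^{-1}$. Substituting this into $\theta_L^{(n)}|_k\mathfrak{M}(Z) = \det(cZ+d\cdot 1_n)^{-k}\theta_L^{(n)}(\mathfrak{M}\cdot Z)$ decomposes the exponent of each Fourier term into a rational twist $\e(\tfrac{a}{c}\mathrm{tr}({}^tXSX))$ and a Gaussian contribution in $(Z+\tfrac{d}{c}\cdot 1_n)^{-1}$.

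Next, split the sum over $X\in\Z^{m,n}$ by residue class modulo $c$, writing $X = Y + cX'$ with $Y\in(\Z/c\Z)^{m,n}$ and $X'\in\Z^{m,n}$. The twist depends only on $Y\bmod c$ (modulo integers), while Poisson summation in the $X'$-variable converts the remaining Gaussian sum into a sum over the dual lattice $(L^*)^n$, weighted by Gauss sums in $Y$. After using $ad-bc=1$ to simplify the phases, the output should take the shape $\kappa\cdot\theta_{L'}^{(n)}(Z)$ for some constant $\kappa$ and some lattice $L'$ arising as a partial dualization of $L$.

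To identify $L'$ with the local description in the statement, work prime by prime using the hypothesis $(c,d)=1$ together with the Chinese Remainder Theorem. At primes $p\mid c$ (so $p\nmid d$), the sum over $Y\bmod c$ decomposes by CRT and the Gauss sum at the $p$-component evaluates to pick out exactly $L\otimes\Z_p$ (unchanged). At primes $p\mid d$ (so $p\nmid c$), the $Y$-sum is trivial in the $p$-component and the Poisson transformation has directly produced $(L\otimes\Z_p)^*$. Gluing these local pictures gives the global lattice $L'$ with the asserted local structure, and $\kappa$ collects the Gauss sum values.

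The main obstacle will be the CRT bookkeeping at primes $p\mid c$: one must verify that the combination of twist and Gauss sum indeed yields an untwisted theta series for $L\otimes\Z_p$, rather than some coset sum or an additional character factor. This is where the specific scalar structure of the blocks of $\mathfrak{M}$ is essential; a more general symplectic element would produce nontrivial characters, corresponding to the more sophisticated version of Kitaoka's computation that the paper explicitly avoids by restricting attention to the diagonal embedding.
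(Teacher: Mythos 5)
Your route is genuinely different from the paper's. You propose to redo Kitaoka's computation from scratch in degree $n$: rewrite the action of $\mathfrak{M}$ as an affine transformation of $(Z+\tfrac{d}{c}1_n)^{-1}$, split the lattice sum modulo $c$, and apply Poisson summation. The paper never performs Poisson summation in degree $n$ at all: it quotes Kitaoka's degree-one result for theta series with harmonic polynomial coefficients (together with the key observation that his constant of proportionality does not depend on the harmonic polynomial), and transports it to degree $n$ via Ibukiyama's holomorphic differential operators $\mathcal{D}$, which intertwine $|_k\mathfrak{M}$ on $\mathbb{H}_n$ with a product of degree-one slash actions of $M$ in the diagonal variables $z_1,\dots,z_n$, and which collectively determine a holomorphic $F$ through its Taylor expansion in the off-diagonal entries of $Z$. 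Applying $\mathcal{D}$ to $\theta_L^{(n)}$ produces finite sums of products $\theta_L^{(1)}(q_{1,t})(z_1)\cdots\theta_L^{(1)}(q_{n,t})(z_n)$, and the degree-one formula finishes the proof. This reduction is what makes the paper's argument a few lines long and is precisely why the theorem is stated only for the $n$-fold diagonal embedding of an ${\rm SL}_2(\Z)$ matrix.

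The weakness of your plan is that the step carrying all of the arithmetic content is deferred rather than carried out. After Poisson summation you are left with a sum over a rescaled dual lattice weighted by Gauss sums of the shape $\sum_{Y\bmod c}\mathbf{e}\bigl(\tfrac{a}{c}\mathrm{tr}(S[Y])+\cdots\bigr)$, and the entire assertion of the theorem is that these weights collapse to a single constant $\kappa$ times the indicator of one lattice $L'$ --- rather than to a linear combination of coset theta series or a theta series twisted by a character. Verifying this requires using that $L$ is even of level $N$, that $cd=N$ with $(c,d)=1$, and that $ad-bc=1$; it is exactly the computation Kitaoka performs in degree one, and its degree-$n$ analogue does not follow formally from the degree-one identity within your framework. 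You correctly flag this as ``the main obstacle'' but offer no argument for it, so as written the proposal is an outline of the standard strategy rather than a proof. The computation is certainly doable (the paper itself remarks that generalizing Kitaoka's computation would even yield a more general statement), but until the Gauss-sum evaluation and the local identification of $L'$ at primes dividing $c$ and $d$ are actually executed, the proof is incomplete.
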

Before we begin the proof, we need to prepare some more notation and recall the facts: 
Kitaoka \cite{Kita} proved the degree one version of this statement for a more general type of theta series
$$\theta^{(1)}_L(q)(z)=\sum_{{\mathfrak x}\in L} q({\mathfrak x}) e^{2\pi i \left<{\mathfrak x},{\mathfrak x}\right>\cdot z}$$
 allowing homogeneous harmonic polynomials $q$ on ${\mathbb C}^m$ as coefficients in the theta series. 
An inspection of his proof shows that the constants of proportionality  do NOT depend on those harmonic polynomials. 

Ibukiyama \cite{Ibu} investigated holomorphic differential operators ${\mathcal D}$ on $\hh_n$, which are polynomials in the partial holomorphic derivatives, 
evaluated in ${\rm  diag}(z_1,\cdots , z_n)$; Such a ${\mathcal D}$ maps holomorhic functions $F$ on ${\mathbb H}_n$ to holomorphic
functions ${\mathcal D}(F)$ defined on ${\mathbb H}_1^n$
such that, with the notation above 
\[{\mathcal D}(F|_k {\frak M})={\mathcal D}(F)|^{(z_1)} _{k_1}M\cdots |^{(z_n)} _{k_n}M \] 
holds for arbitrary $M\in {\rm SL}_2({\mathbb R})$ with certain weights $k_i\ge k$. 
The upper index $z_i$ at the slash-operator indicates that $|_{k_i}M$ should be applied w.r.t. the variable $z_i$.

He also showed that from all the ${\mathcal D}(F)$ one can recover the Taylor  expansion of $F$ with respect to the non-diagonal coefficients of $Z=(z_{ij})\in \hh_n$. 
To show the theorem, it is then enough, to show the equality of both sides after applying all such differential operators. 

 To do this we need some more notation:    
 For a fixed differential operator ${\mathcal D}$ of Ibukiyama type and $w_1, \cdots, w_n\in \C ^m$ changing the automorphy 
 factor from $\det ^k$ to $k_1,\cdots,k_n$ in the 
 variables $z_1,\cdots ,z_n$ we define
  \[{\mathcal D}e^{2\pi i \sum \left<w_i,w_j\right>z_{ij}}=P(w_1,\cdots ,w_n )e^{2\pi i \sum \left<w_i,w_i\right>z_{i}}\]
  with $z_i:=z_{ii}$. 
Then $P$ is a harmonic polynomial in each of the variables $w_i$ and we may write it as a finite sum of pure tensors:
\[P(w_1, \cdots ,w_n)=\sum _{t}q_{1,t}(w_1)\cdots q_{n,t}(w_n),\]
where the $q_{i,t}$ are harmonic polynomials in the variables $w_i$. 

\begin{proof}[Proof of Theorem \ref{Thm:Kita}]
Applying ${\mathcal D}$ to the right hand side of (\ref{Eq:Kita}) gives 
\[\kappa \cdot \sum _t \theta _{L'}^{(1)}(q_{1,t})(z_1)\cdots \theta _{L'}^{(1)}(q_{n,t})(z_n).\]
Applying ${\mathcal D}$ to the left hand side gives 
\[\sum _t \theta _{L}^{(1)}(q_{1,t})(z_1)\cdots \theta _{L}^{(1)}(q_{n,t})(z_n)|_{k_1}M\cdots |_{k_n}M.\]
Now one has to use Kitaoka's result in degree $1$ to get our assertion. 
\end{proof}

\subsection{Application}
Kitaoka's formula allows us to exclude certain congruences between modular forms and theta series of higher level (as far as the prime to $p$ components of the levels are concerned): 
\begin{Prop}
\label{PropA}
Let $n$ be an even positive integer and $p$ a prime with $p\ge n+1$.  
Let $N$, $N'$ be coprime to $p$ and $S\in \Lambda_n^+$ with level $p^aN$.
Assume that
$$p^bN'\mid p^aN\quad\mbox{and}\quad \phi \equiv \theta^{(n)}_S\bmod p$$
for some $\phi\in M_k(\Gamma_0^{(n)}(p^bN'),\chi )_{{\mathbb Z}_{(p)}}$ with $\chi ^2=1$. 
Then $N=N'$ and $\chi _N=(\chi_S)_N$.

\end{Prop}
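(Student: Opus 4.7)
The strategy mirrors Proposition~\ref{PropA-2} in the higher-degree setting, based on the degree-$n$ Kitaoka formula (Theorem~\ref{Thm:Kita}) and the modified $q$-expansion principle (Theorem~\ref{Thm:q-exp}). To prove $N = N'$, I would argue by contradiction. The assumptions $p^b N' \mid p^a N$ and $(NN',p)=1$ force $b \le a$ and $N'\mid N$, so if $N \ne N'$ there is a prime $\ell \ne p$ with $s := v_\ell(N) > v_\ell(N') =: t$. I would then apply Kitaoka's formula with $d = \ell^s$ and $c = p^aN/\ell^s$ (which are coprime), diagonally embedding $M = \smat{*}{*}{c}{d}$ to obtain $\mathfrak{M} \in {\rm Sp}_n(\Z)$; this yields
\[\theta_S^{(n)} |_k \mathfrak{M} = \kappa \cdot \theta_{L'}^{(n)},\]
where $L'$ is dual to $L$ at $\ell$ and equals $L$ at all other primes. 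By the sharp definition of level, some entry of the Gram matrix of $L'$ has $\ell$-adic denominator exactly $\ell^s$. Since $b \le a$ implies $p^b \mid c$, we have $\mathfrak{M}\in \Gamma_0^{(n)}(p^b)$, and Theorem~\ref{Thm:q-exp} lets the congruence transport: $\phi|_k\mathfrak{M} \equiv \kappa\, \theta_{L'}^{(n)} \bmod p$.

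Next I would compare Fourier coefficients at $T = L'$. The right-hand coefficient is $\kappa \cdot A(L',L') = \kappa \cdot |{\rm Aut}(L')|$; the second factor is a $p$-adic unit by the Minkowski bound~\eqref{Mink} (using $p > n+1$), and an inspection of Kitaoka's constant shows that $\kappa$ is also a $p$-adic unit because $\ell \ne p$. On the left, a cusp-width calculation in $\Gamma_0^{(n)}(p^bN')$ at the cusp $\mathfrak{M}(\infty)$ — showing that the translation stabilizer is generated by $\mathfrak{M}\smat{1_n}{B}{0}{1_n}\mathfrak{M}^{-1}$ for $B \in \ell^t\,{\rm Sym}_n(\Z)$ — reveals that $\phi|_k\mathfrak{M}$ is supported on Fourier indices in $\frac{1}{\ell^t}\Lambda_n$. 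Since $s > t$, the index $L'$ is \emph{not} in $\frac{1}{\ell^t}\Lambda_n$, so the left-hand coefficient vanishes — a contradiction. Hence $N = N'$.

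With $N = N'$ in hand, both $\phi$ and $\theta_S^{(n)}$ are automorphic under $\Gamma_0^{(n)}(p^aN)$. For $\gamma = \smat{A}{B}{C}{D}$ in this group, $\phi|_k\gamma = \chi(\det D)\phi$ and $\theta_S^{(n)}|_k\gamma = \chi_S(\det D)\theta_S^{(n)}$; combined with $\phi \equiv \theta_S^{(n)}\not\equiv 0 \bmod p$ (the nonvanishing follows again from $|{\rm Aut}(S)|\not\equiv 0 \bmod p$), this forces $\chi(\det D) \equiv \chi_S(\det D) \bmod p$. As $\det D$ ranges over all of $(\Z/p^aN)^\times$ (via suitable block-diagonal choices of $\gamma$), one obtains $\chi = \chi_S$ as characters of $(\Z/p^aN)^\times$, whose restriction to the $N$-part gives $\chi_N = (\chi_S)_N$.

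The main obstacle is the cusp-width analysis in the Siegel setting: one must carefully compute the translation stabilizer inside $\Gamma_0^{(n)}(p^bN')$ at the cusp $\mathfrak{M}(\infty)$ and verify that $L'$ genuinely attains the full $\ell$-adic denominator $\ell^s$ (which relies on the sharpness of the level of $S$ at $\ell$). A secondary technical point is confirming that Kitaoka's explicit constant $\kappa$ is indeed a $p$-adic unit for $\ell \ne p$.
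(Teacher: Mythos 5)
Your route to $N=N'$ is essentially the paper's: Kitaoka's formula at the cusp $d=\ell^{s}$, transport of the congruence to that cusp, vanishing of the Fourier coefficient of $\phi|_k\mathfrak{M}$ at the index $L'$ by a cusp-width argument, and $A(L',L')\not\equiv 0 \bmod p$ via (\ref{Mink}). But two steps are genuinely missing. First, Theorem \ref{Thm:q-exp} cannot be applied to $\phi-\theta^{(n)}_S$ as such: the two forms have different weights ($k$ versus $n/2$) and in general different $p$-components of nebentypus, and the $q$-expansion principle concerns a single form in a fixed space. The paper first multiplies $\theta^{(n)}_S$ by a power ${\mathcal E}^t$ of the weight-$\frac{p-1}{2}$ form with nontrivial quadratic nebentypus mod $p$ (with $t$ chosen via Proposition \ref{weightcongruence}) so that $\phi$ and $\theta^{(n)}_S{\mathcal E}^t$ lie in the same space, and only then applies Theorem \ref{Thm:q-exp} to $\frac{1}{p}\bigl(\phi-\theta^{(n)}_S{\mathcal E}^t\bigr)$; this is also the source of the sign $(\frac{\det D}{p})^t$ when slashing. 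Second, your contradiction requires $\kappa$ to be a $p$-adic unit, which you defer to an ``inspection of Kitaoka's constant'' that you do not carry out (and which is only available in degree one). The paper avoids this entirely: from the vanishing of the left-hand coefficient it deduces $\kappa\in pR_{p^a}$, while applying the modified $q$-expansion principle to $\theta^{(n)}_{S'}|\mathfrak{M}^{-1}=\kappa^{-1}\theta^{(n)}_S$ forces $\kappa^{-1}\in R_{p^a}$ --- a contradiction with no computation of $\kappa$ needed.

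The character argument does not work as written. From $\phi|_k\gamma=\chi(\det D)\phi$, $\theta^{(n)}_S|_{n/2}\gamma=\chi_S(\det D)\theta^{(n)}_S$ and $\phi\equiv\theta^{(n)}_S\bmod p$ you cannot conclude $\chi(\det D)\equiv\chi_S(\det D)$; you would need $\phi|\gamma\equiv\theta^{(n)}_S|\gamma \bmod p$, which is exactly the content of the (weight-matched) $q$-expansion principle and is not invoked. Moreover, block-diagonal elements $\smat{A}{0}{0}{{}^tA^{-1}}$ of $\Gamma_n$ have $\det D=\pm1$, so they do not make $\det D$ range over $(\Z/p^aN\Z)^\times$; one must use genuine elements of $\Gamma_0^{(n)}(p^aN)$ with $\det D\equiv 1\bmod p$, as the paper does. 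Finally, your conclusion $\chi=\chi_S$ on all of $(\Z/p^aN\Z)^\times$ is too strong and is false in general: when $k-\frac{n}{2}$ is an odd multiple of $\frac{p-1}{2}$ the $p$-components of $\chi$ and $\chi_S$ must differ (this is the whole point of the relation $\chi='\chi_S$), and the proposition accordingly claims only $\chi_N=(\chi_S)_N$. That your method would ``prove'' the stronger, false statement is a symptom of the missing transport step.
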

\begin{proof}
We want to apply the modified $q$-expansion principle. 
To do so, we must modify $\phi$ and $\theta^{(n)}_S$  to arrive at the same weights and same (quadratic) $p$-component of nebentypus: 
We recall from B\"ocherer-Nagaoka \cite{Bo-Na:2007} that 
there exist degree $n$ modular forms $\mathcal E$
  for $\Gamma_0^{(n)}(p)$ of weight $\frac{p-1}{2}$ of nontrivial quadratic nebentypus.
Then Proposition \ref{weightcongruence} allows us to choose $t\in {\mathbb N}$ such that $\theta^{(n)}_S\cdot {\mathcal E}^t$ and $\phi$ have the same weight
  and the same $p$-component of nebentypus (this holds if $k\geq \frac{n}{2}$;
  the modification for the case of the opposite inequality is obvious).
  Then we can indeed apply Theorem \ref{Thm:q-exp} to 
  $\frac{1}{p}\left( \phi-\theta^{(n)}_S\cdot {\mathcal E}^t\right)$.

  We may enlarge $b$ (if necessary) to get $a=b$.
  Assume that there is a prime $q$ different from $p$ with $q^r||N$ and $q^s || N'$ with $s<r$. 
We apply Kitaoka's theorem to $\theta _S^{(n)}$ with 
\[M=\mat{*}{*}{p^a\cdot \frac{N}{q^r}}{q^r}. \]
By the modified $q$-expansion principle Theorem \ref{Thm:q-exp}, we
have $\theta^{(n)}_S| \mathfrak{M}\equiv \pm \phi | \mathfrak{M}$ mod $p$,
where the $\pm$ comes in from applying $M$ to ${\mathcal E}^t$.

Let $L$ correspond to $S$ and correspond $L'$ to $S'$. 
Then $S'$ is a rational symmetric matrix with
$q^r$ appearing in its denominator. 
The Fourier expansion of
$\theta _S^{(n)}| {\frak M}=\kappa \cdot \theta _{S'}^{(n)}$
has a $S'$th Fourier coefficient which is 
\[\kappa \cdot A(S',S').\]
By the condition $p>n+1 $, we see that $A(S',S')\not \equiv 0$ mod $p$. 
On the other hand, $\phi| {\mathfrak M}$ does not have a nonzero 
Fourier coefficient at 
$S'$, because the $q$-part of the width of the cusp
${\mathfrak M}$ for $\Gamma_0^{(n)}(p^aN')$ is $q^s$ with $s<r$.
Therefore, $\kappa\cdot A(S',S')$
and hence $\kappa$ must be divisible by $p$, i.e. $\kappa\in p\cdot R_{p^a}$.
But the modified$q$-expansion principle
tells us that
$\theta^{(n)}_{S'}| {\mathfrak M}^{-1}= \frac{1}{\kappa}\cdot \theta^{(n)}_S$
is $p$-integral, in particular $\frac{1}{\kappa}\in R_{p^a}$. 
This is a contradiction, 
and we obtain $N'=N$.

The statement about the $N$-components of nebentypus
characters follows by applying the
modified $q$-expansion principle Theorem \ref{Thm:q-exp} with
any integral matrix $\smat{A}{B}{C}{D}$ satisfying $C\equiv 0_n$ mod $p^rN$
and $\det(D)\equiv 1\bmod p$.
\end{proof}
\begin{Cor}
\label{Cor:Kita}
Let $n$ be a positive even integer and $p$ a prime with $p>n+3$. Let $S\in \Lambda^+_n$.
Assume that $\theta _S^{(n)}\equiv \phi $ mod $p$ for some $\phi \in M_k(\Gamma_0^{(n)}(N),\chi )$ with $\chi ^2=1$.     
Then ${\rm level}(S)$ is of the form
``$p$-power $\times N'$'' with some $N'\mid N$
and $\chi =' \chi_S $.
\end{Cor}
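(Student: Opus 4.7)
The plan is to run essentially the same argument used in the proof of Proposition \ref{PropA}, but without the extra divisibility hypothesis that $\phi$'s level divides $\mathrm{level}(S)$. Write $\mathrm{level}(S)=p^aM$ with $\gcd(M,p)=1$ and $N=p^bN_0$ with $\gcd(N_0,p)=1$; the first assertion amounts to showing $M\mid N_0$ (so that $\mathrm{level}(S)=p^a\cdot M$ with $M$ a divisor of $N$), and the character relation $\chi='\chi_S$ will follow by the same mechanism applied to another choice of matrix.

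Suppose for contradiction that some prime $q\neq p$ satisfies $q^r\|M$ and $q^s\|N_0$ with $s<r$. Apply Kitaoka's formula (Theorem \ref{Thm:Kita}) with $d=q^r$ and $c=p^aM/q^r$ (so $\gcd(c,d)=1$); the diagonal embedding $\mathfrak{M}\in\mathrm{Sp}_n(\Z)$ then gives $\theta_S^{(n)}|_{n/2}\mathfrak{M}=\kappa\cdot\theta_{S'}^{(n)}$ where $S'$ has exact $q$-denominator $q^r$. Since $p\geq n+3$, the B\"ocherer--Nagaoka form $\mathcal{E}\in M_{(p-1)/2}(\Gamma_0^{(n)}(p),(\tfrac{*}{p}))$ with $\mathcal{E}\equiv 1\bmod p$ is available; choose $t\in\mathbb{N}$ so that $\theta_S^{(n)}\cdot\mathcal{E}^t$ matches both the weight $k$ and the $p$-component of the nebentypus of $\phi$. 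The congruence $\theta_S^{(n)}\cdot\mathcal{E}^t\equiv\phi\bmod p$ is preserved under the slash action of $\mathfrak{M}$.

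Compare $S'$-Fourier coefficients on both sides. The left contributes $\kappa\cdot A(S',S')$ up to a sign from $\mathcal{E}^t|_k\mathfrak{M}\equiv\pm 1\bmod p$, and $A(S',S')\not\equiv 0\bmod p$ because $\mathrm{GL}_n(\Z)$ has no element of order $p$ when $p>n+1$. The right side $\phi|_k\mathfrak{M}$ has a Fourier expansion indexed by symmetric matrices whose denominators are controlled by the cusp-width at $\mathfrak{M}$ for $\Gamma_0^{(n)}(N)$; a direct computation using $q\nmid c$ shows the $q$-part of this width is $q^s$. Since $r>s$, the $S'$-coefficient vanishes, forcing $\kappa\equiv 0\bmod p$. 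But Theorem \ref{Thm:q-exp} applied to $\theta_{S'}^{(n)}|_{n/2}\mathfrak{M}^{-1}=\kappa^{-1}\theta_S^{(n)}$, noting that $\mathfrak{M}^{-1}\in\Gamma_0^{(n)}(p^a)$ and that $\theta_{S'}^{(n)}$ has integer Fourier coefficients (hence is $p$-integral), forces $\kappa^{-1}\in\Z_{(p)}$, contradicting $p\mid\kappa$. Hence $M\mid N_0$, proving the first assertion.

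For the character statement, view both $\theta_S^{(n)}\cdot\mathcal{E}^t$ and $\phi$ at the common level $\Gamma_0^{(n)}(p^{\max(a,b)}N_0)$ (legitimate now that $M\mid N_0$) and apply Theorem \ref{Thm:q-exp} to their difference evaluated at $\gamma=\smat{A}{B}{C}{D}\in\Gamma_0^{(n)}(p^{\max(a,b)}N_0)$ with prescribed $\det D$: varying $\det D$ modulo $N_0$ with $\det D\equiv 1\bmod p$ identifies $(\chi_S)_{N_0}=\chi_{N_0}$, while varying modulo $p$ with $\det D\equiv 1\bmod N_0$ identifies the $p$-component of $\chi_S$ with that of $\chi$ up to the twist $(\tfrac{*}{p})^t$ built into $\mathcal{E}^t$; this is precisely $\chi='\chi_S$ in the sense of Definition \ref{Eq_Char}. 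The step I expect to be most delicate is the cusp-width computation in degree $n>1$: one must verify that the translation lattice $\{Y\in\mathrm{Sym}_n(\Q):\mathfrak{M}\smat{1_n}{Y}{0_n}{1_n}\mathfrak{M}^{-1}\in\Gamma_0^{(n)}(N)\}$ has $q$-part exactly $q^s\cdot\mathrm{Sym}_n(\Z)_{(q)}$, which reduces (as in the proof of Proposition \ref{PropA}) to the condition $c^2Y\equiv 0\bmod N$ together with $q\nmid c$.
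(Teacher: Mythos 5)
Your proposal is correct and follows essentially the paper's own route: the Corollary is obtained by rerunning the proof of Proposition \ref{PropA} without the divisibility hypothesis, deriving a contradiction at any prime $q\neq p$ whose exponent in ${\rm level}(S)$ would exceed that in $N$, and your use of Kitaoka's formula, the $A(S',S')\not\equiv 0\bmod p$ coefficient comparison, the cusp-width computation, and the modified $q$-expansion principle (applied in both directions to pin down $\kappa$) all match the paper's argument, as does deriving $\chi='\chi_S$ from the twist by $\mathcal{E}^t$. The one small point to watch --- shared with the paper's own treatment, where it is handled by ``enlarging $b$'' --- is that applying Theorem \ref{Thm:q-exp} to $\gamma=\mathfrak{M}$ requires the $p$-power in the working level to divide $c=p^aM/q^r$, so the case where the $p$-part of $N$ exceeds $p^a$ needs a word of justification.
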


\section*{Acknowledgment}
The authors would like to thank Professors G. Nebe and R. Schulze-Pillot for helpful advices, 
especially on the order of automorphism groups for quadratic forms and on the generalization of Kitaoka's transformation formula.
They would also like to thank Professor T. Yamauchi for giving them information on $q$-expansion principles.  
This work was supported by JSPS KAKENHI Grant Number 22K03259.


\providecommand{\bysame}{\leavevmode\hbox to3em{\hrulefill}\thinspace}
\providecommand{\MR}{\relax\ifhmode\unskip\space\fi MR }
\providecommand{\MRhref}[2]{%
  \href{http://www.ams.org/mathscinet-getitem?mr=#1}{#2}
}
\providecommand{\href}[2]{#2}

\begin{flushleft}
Siegfried B\"ocherer\\
Kunzenhof 4B \\
79177 Freiburg, Germany \\
Email: siegfried.boecherer@uni-mannheim.de
\end{flushleft}

\begin{flushleft}
  Toshiyuki Kikuta\\
  Faculty of Information Engineering\\
  Department of Information and Systems Engineering\\
  Fukuoka Institute of Technology\\
  3-30-1 Wajiro-higashi, Higashi-ku, Fukuoka 811-0295, Japan\\
  E-mail: kikuta@fit.ac.jp
\end{flushleft}

\end{document}